\documentclass[11pt,english,oneside]{amsart}

\usepackage[breaklinks,colorlinks,linkcolor=blue,citecolor=blue,urlcolor=black]{hyperref}
\usepackage[T1]{fontenc}
\allowdisplaybreaks[4]
\usepackage{geometry}
\usepackage{indentfirst}
\usepackage{amssymb}
\usepackage{color}
\usepackage{graphicx}
\usepackage{subfigure}
\usepackage{enumerate}
\usepackage{float}

\usepackage{booktabs}
\usepackage{array, caption, threeparttable}
\usepackage[font=small,labelfont=bf,labelsep=none]{caption}
\usepackage{graphicx,color}
\usepackage{amsmath, amssymb, graphics}
\usepackage{graphicx}
\usepackage{epstopdf}

\numberwithin{figure}{section}

\makeatletter
\numberwithin{equation}{section} 
\numberwithin{figure}{section} 
\@ifundefined{theoremstyle}{\usepackage{amsthm}}{}
\theoremstyle{plain}
\newtheorem{thm}{Theorem}[section]
\newtheorem{cor}[thm]{Corollary}
\newtheorem{prop}[thm]{Proposition}
\newtheorem{rmk}[thm]{Remark}

\newtheorem{lem}[thm]{Lemma}
\newtheorem{defn}[thm]{Definition}

\usepackage{geometry}

\newcommand{\R}{\mathbb{R}}

\def\exp{\hbox{\rm exp}}
\smallskip
\def\<{{\langle }}
\def\>{{\rangle }}

\makeatother

\usepackage{babel}

\def\exp{\hbox{\rm exp}}
\smallskip
\def\<{{\langle }}
\def\>{{\rangle }}

\makeatother

\usepackage[mathscr]{eucal}
\usepackage{amssymb}
\usepackage{latexsym}
\usepackage{amsthm}
\theoremstyle{plain}
\newtheorem{theorem}{Theorem}[section]

\makeatletter
\@addtoreset{equation}{section}

\usepackage{color}

\usepackage{graphicx,color}

\title[Examples of compact embedded mean convex $\lambda$-hypersurfaces]
{Examples of compact embedded mean convex $\lambda$-hypersurfaces}
\author{Qing-Ming Cheng, Junqi Lai and  Guoxin Wei}

\address{Qing-Ming Cheng \newline \indent Mathematical Science Research Center, \newline
\indent Chongqing University of Technology, Chongqing 400054,  P. R. China  \newline \indent
qingmingcheng@yahoo.com}
\address{Junqi Lai \\  School of Mathematical Sciences, South China Normal University,
510631, Guangzhou,  China, 2019021668@m.scnu.edu.cn}
\address{Guoxin Wei \\  School of Mathematical Sciences, South China Normal University,
510631, Guangzhou,  China, weiguoxin@tsinghua.org.cn}

\begin{document}

	\maketitle
	
	\begin{abstract}
       There is a well-known conjecture asserts that the round sphere should be the only compact embedded self-shrinker (i.e. $0$-hypersurface) which is diffeomorphic to a sphere. S. Brendle \cite{B} confirmed the conjecture for 2-dimensional $0$-hypersurfaces. For any dimensional $\lambda$-hypersurfaces, if $\lambda<0$, we \cite{CLW} constructed compact {\it convex} embedded $\lambda$-hypersurface which is diffeomorphic to a sphere and is not a round sphere.
	   In this paper, for $\lambda>0$, we construct a compact {\it mean convex} embedded $\lambda$-hypersurface which is diffeomorphic to a sphere and is not a round sphere. In fact, for $\lambda>0$, there are no compact {\it convex} embedded $\lambda$-hypersurfaces which are diffeomorphic to spheres except a round sphere.
	\end{abstract}
	
	
	\section{Introduction}

A hypersurface $\Sigma^n \subset \R^{n+1}$ is called a $\lambda$-hypersurface if it satisfies
\begin{equation}\label{0316eq1.1}
     H + \< X, \nu\> = \lambda,
\end{equation}
where $\lambda$ is a constant, $X$ is the position vector, $\nu$ is an inward unit normal vector and $H$ is the mean curvature. 
The notion of $\lambda$-hypersurfaces was first introduced by Cheng and Wei in \cite{CW} (also see \cite{MR}).
Cheng and Wei \cite{CW} proved that $\lambda$-hypersurfaces are critical points of the weighted area functional with respect to weighted volume-preserving variations.
This equation of $\lambda$-hypersurfaces also arises in the study of isoperimetric problems in weighted (Gaussian) Euclidean spaces, which is a long-standing topic studied in various fields of science.
$\lambda$-hypersurfaces can also be viewed as stationary solutions to the isoperimetric problem in the Gaussian space.
For more information on $\lambda$-hypersurfaces, one can see \cite{CW} and \cite{MR}.

Note that the standard sphere with inward unit normal vector has positive mean curvature in our convention.
Since an orientable hypersurface has two directions of the unit normal vector,  if one changes the direction of the normal vector, the $\lambda$ will change its sign.
For example, if $\Sigma$ is a $\lambda$-hypersurface with unit normal vector $\nu$ and $\lambda=5$, then it is also a $\lambda$-hypersurface with unit normal vector $-\nu$ and $\lambda=-5$.

Firstly, we give some examples of $\lambda$-hypersurfaces. It  is well known that  there are several special complete embedded solutions to (\ref{0316eq1.1}): hyperplanes, the sphere centered at the origin,
 cylinders with an axis through the origin. Cheng and Wei \cite{CW1} constructed the first nontrivial example of a $\lambda$-hypersurface that is diffeomorphic to $\mathbb S^{n-1} \times \mathbb S^1$ using techniques similar to Angenent \cite{A}. Using a similar method to McGrath \cite{M}, Ross \cite{R} constructed a $\lambda$-hypersurface in $\R^{2n+2}$ which is diffeomorphic to $\mathbb S^{n} \times \mathbb S^{n} \times \mathbb S^1$ and exhibits an $SO(n) \times SO(n) $ rotational symmetry. Li and Wei \cite{LW} constructed an immersed $S^n$ $\lambda$-hypersurface using a similar method to \cite{D}. It is quite interesting to find other nontrivial examples of $\lambda$-hypersurfaces (also see \cite{I}).

Secondly, we introduce some rigidity results about $\lambda$-hypersurfaces.
If $\lambda=0$, $\langle X, \nu\rangle +H=\lambda=0$, then $X:\Sigma^n\to  \mathbb{R}^{n+1}$ is a self-shrinker of  mean curvature flow, which plays an important role in the study on singularities of the mean curvature flow.
 Huisken \cite{H} proved that any compact embedded, and {\it mean convex} (i.e. $H\geq0$) self-shrinker is a sphere.
Later, Colding and Minicozzi \cite{CM} generalized Huisken's results to the case of complete self-shrinkers.


For $\lambda=0$ (that is, self-shrinkers), there is a well-known conjecture that asserts the round sphere  should be the only embedded self-shrinker which is diffeomorphic to a sphere.
Brendle \cite{B} proved the above conjecture for $2$-dimensional self-shrinker.
For the higher dimensional  self-shrinkers, the conjecture is still open.
For $\lambda<0$, Cheng, Lai and Wei \cite{CLW} constructed a compact embedded $\lambda$-hypersurface that is diffeomorphic to a sphere and is not isometric to a round sphere.
In fact,
\begin{thm}\label{0902thm1.1}
For  $n \ge 2$ and $ -\frac{2}{\sqrt{n+2}} < \lambda <0$,  there exists an  embedded convex $\lambda$-hypersurface $\Sigma^n \subset \R^{n+1}$ which is diffeomorphic to $\mathbb{S}^n$ and is not  isometric to a standard sphere.
\end{thm}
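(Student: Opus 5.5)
Within the class of $SO(n)$-invariant hypersurfaces I would reduce \eqref{0316eq1.1} to an ODE for a profile curve. Write $X=(x,\,y\,\omega)$ with $\omega\in\mathbb S^{n-1}$, $y>0$, and parametrize the profile $\gamma(s)=(x(s),y(s))$ in the upper half plane by arclength with tangent angle $\theta$. Equation \eqref{0316eq1.1} then becomes
\[
\theta'=\frac{n-1}{y}\cos\theta+x\sin\theta-y\cos\theta+\lambda ,
\]
up to sign conventions fixed by the inward normal. The round sphere of radius $r$ solving $\frac{n}{r}-r=\lambda$ is the solution through $(0,r)$. The plan is to shoot: start from a point $(0,y_0)$ on the $y$-axis with horizontal tangent. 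Reflection symmetry $x\mapsto -x$ means a curve that meets the $x$-axis orthogonally, say at $(x_1,0)$ with $x_1>0$, closes up by reflection into the profile of an embedded sphere. Alternatively, start orthogonally from the $x$-axis at $(a,0)$; there smoothness at the axis forces $\theta'=\lambda+ a\,(\cdots)$ divided appropriately (the singular term is handled by the standard $n\,\kappa$ limit). Then look for $a$ such that the curve hits the $y$-axis perpendicularly. I would use the second formulation, which gives a one-parameter family $\gamma_a$ with smooth dependence on $a$, including near the singular point, by the usual fixed-point argument.

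Next comes convexity and the first-hit analysis. For $\lambda<0$ I would show that for each $a$ in a suitable interval the curve $\gamma_a$ stays a convex graph over the $y$-direction, with $\theta'$ of one sign, until it first reaches the $y$-axis. To get this, differentiate the ODE to obtain an equation for $\theta'$ of the form $(\theta')' = (\text{coefficient})\,\theta' + (\text{terms with a sign})$. A maximum-principle argument then shows that $\theta'$ cannot vanish before the curve becomes vertical-to-axis. Here the hypothesis $\lambda>-2/\sqrt{n+2}$ should enter, as a condition ensuring that the linearization at the sphere has the right sign and that the cylinder radius exceeds the relevant quantities. Define the map $F(a)=\theta$ at the first intersection with the $y$-axis, minus $\pi/2$ (horizontal tangent).

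For the intermediate value argument, compute $F$ at the two ends of the admissible range of $a$. As $a\to 0$ one gets rescaled behaviour: near the origin the curve is close to a small sphere, with the term $\frac{n-1}{y}$ dominant. The curve should overshoot, so $F>0$. At the other end, $a$ near the limit where the solution approaches the cylinder $y=\sqrt{\,\cdot\,}$, or as $a$ grows, the curve undershoots, so $F<0$. Continuity then gives $F(a)=0$ twice, at two zeros $a_1\neq a_2$: one is the round sphere radius $r_\lambda$, and the other yields a nonround solution. More precisely, I would show that $F$ has a sign change on an interval not containing $r_\lambda$. The cleanest way is to linearize at $a=r_\lambda$. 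The linearized equation is a Hermite/Legendre-type ODE, and the condition $\lambda>-2/\sqrt{n+2}$ should be exactly where the derivative $F'(r_\lambda)$ has a sign opposite to what the endpoint behaviour forces. So $F$ must cross zero again, giving a second symmetric closed convex profile, hence an embedded convex sphere not isometric to a round one.

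The main obstacle is the derivative computation at the sphere together with the endpoint sign estimates. These require careful comparison arguments: bounding $\theta'$ along the curve and controlling where the curve first meets the $y$-axis. The convexity preservation also needs $\lambda<0$ essentially. I would attempt the $F'(r_\lambda)$ computation first, using an explicit Jacobi field along the sphere: variations of the radius give a polynomial solution in $\cos$ of the polar angle, whose degree-two eigenvalue crossing should produce the threshold $2/\sqrt{n+2}$.
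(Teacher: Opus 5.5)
The paper does not prove this theorem; it quotes it from \cite{CLW}, so your outline has to stand on its own. Its skeleton is viable, and your guess about the threshold is right. On the sphere of radius $R$ the Jacobi operator is $R^{-2}(\Delta_{\mathbb S^n}+n+R^2)$. The first nonconstant rotationally symmetric mode that is even under $x\mapsto -x$ has $k=2$, and $2(n+1)=n+R^2$ gives $R^2=n+2$, i.e.\ $\lambda=n/R-R=-2/\sqrt{n+2}$. Concretely, let $F(a)$ be the $x$-coordinate of the first horizontal point of the arc from $(a,0)$, and let $w$ be the Jacobi field generated by moving $a$ (regular at the pole, $w(0)=1$). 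Then $F'(r_\lambda)=-w'(\pi/2)$. As a function of $\mu=n+R^2$, $w'(\pi/2)$ equals $-1$ at $\mu=n$ and can vanish only at $\mu=k(k+n-1)$ with $k$ even. Hence $F'(r_\lambda)>0$ for every $\lambda>-2/\sqrt{n+2}$, and arcs with $a$ slightly below $r_\lambda$ cross the $y$-axis before turning horizontal. You still have to carry this out, but it is not where the proposal breaks.

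The gap is the endpoint analysis, which is exactly where $\lambda<0$ must enter, and you misidentify it.

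\begin{enumerate}
\item From $(a,0)$ one has $\dot\theta(0)=(a+\lambda)/n$, so the arc bends toward the $y$-axis only for $a>-\lambda=|\lambda|$. The regime $a\to0$ is not admissible. There are also no small spheres to compare with: $n/\rho-\rho=\lambda$ has the single root $r_\lambda$, and where $(n-1)/y$ dominates the behaviour is catenoidal.
\item The correct lower end is $a\downarrow-\lambda$. Linearizing \eqref{0526eq2.8} about the $\lambda$-hyperplane $\{x=-\lambda\}$ gives $x\approx-\lambda+(a+\lambda)\,M(-\tfrac12;\tfrac n2;\tfrac{r^2}{2})$. So the arc crosses that hyperplane and shadows it up to a height $r_*\to\infty$. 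You then have to show it turns horizontal within $O(1/r_*)$ of it, i.e.\ at $x>0$ (compare Lemma \ref{231203lem4.1}). Then $F>0$ there, and the intermediate value theorem on $(-\lambda,r_\lambda)$ gives $a^*\neq r_\lambda$.
\item The sign you assert at the lower end is therefore the right one, but not for your reason, and your reason cannot be repaired without $\lambda<0$. At $\lambda=0$ the same arcs cross the $y$-axis almost vertically, which is the opposite sign. Meanwhile the first arc is convex for every $\lambda$ (near the axis $f'f''>0$, which propagates by Lemma \ref{0527lem3.1}), so convexity preservation is not where $\lambda<0$ is needed. Also the sign of $F'(r_\lambda)$ is the same for all $\lambda>-2/\sqrt{n+2}$, including $\lambda=0$. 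Any argument whose endpoint sign ignores the sign of $\lambda$ would therefore produce non-round embedded convex self-shrinkers, contradicting Huisken \cite{H}.
\end{enumerate}

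Smaller points:
\begin{itemize}
\item One sign change gives one zero, not two.
\item Your $F$ is the angle at the first hit of the $y$-axis, and the target there is $\pi$, not $\pi/2$. This $F$ is undefined when the arc turns horizontal first. Use the first horizontal point instead, show it is reached at finite height (an analogue of Proposition \ref{231211lem3.3}), and get continuity from $\ddot r\neq0$ when $\dot r=0$.
\item Nothing at the large-$a$ end is needed, and the sign you claim there is unsupported.
\end{itemize}
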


 For $\lambda>0$, in this paper, we construct a compact embedded $\lambda$-hypersurface which is diffeomorphic to a sphere and is not isometric to a round sphere (see Theorem \ref{0316thm1.1}).

On the other hand, in \cite{S}, Sun developed the compactness theorem for $\lambda$-surfaces in $\mathbb R^{3}$ with uniform $\lambda$ and genus.
As an application of the compactness theorem, he also showed a rigidity theorem for convex $\lambda$-surfaces.
In the same paper, he \cite{S} proposed the problem of constructing a compact convex $\lambda$-surface which is not a sphere (see Question 4.0.4. on page 25, \cite{S}).
In \cite{CLW}, Cheng, Lai and Wei solved Sun's problem for certain $\lambda$ as the special case of our result, that is, $n=2$. In \cite{Hei}, Heilman proved that convex $n$-dimensional  $\lambda$-hypersurfaces are generalized cylinders if $\lambda>0$, which generalized the rigidity result of Colding and Minicozzi \cite{CM} to $\lambda$-hypersurfaces with $\lambda>0$.
From Heilman's result, we know that it is impossible to construct a compact convex $\lambda$-surface which is not a sphere for $\lambda>0$.

In this paper, motivated by the Hopf-type theorem, Sun's problem and \cite{D,DK,LW,S}, for $\lambda>0$, we construct nontrivial embedded mean-convex $\lambda$-hypersurfaces which are diffeomorphic to $\mathbb S^{n}$ and not isometric to a standard sphere.
In fact, we obtain the following theorem:

\begin{theorem}\label{0316thm1.1}
Given $n \ge 2$ and $\lambda\ge4\sqrt{\frac{n-1}{5}}$, there exists an  compact embedded and mean convex $\lambda$-hypersurface $\Sigma \subset \R^{n+1}$ which is diffeomorphic to $\mathbb{S}^{n} $ and is not  isometric to a standard sphere.
\end{theorem}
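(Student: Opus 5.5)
We will look for $\Sigma$ as a rotation hypersurface, and reduce the construction to a planar ODE shooting problem. Write the profile curve as $(x(s),r(s))$ in the half-plane $\{r>0\}$, where $x$ is the coordinate along the axis and $r$ is the distance to the axis, and rotate it about the $x$-axis with $SO(n)$-symmetry. Let $\theta$ be the tangent angle, so that $x'=\cos\theta$ and $r'=\sin\theta$. Equation \eqref{0316eq1.1} then becomes the system
\begin{equation*}
\theta'=\lambda+x\sin\theta-r\cos\theta+\frac{(n-1)\cos\theta}{r},
\end{equation*}
with the sign conventions fixed so that the round sphere of radius $R$ with $n/R-R=\lambda$ is a solution. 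The hypersurface obtained is diffeomorphic to $\mathbb S^n$, and is smooth and embedded, precisely when the profile curve is a simple arc that leaves the axis orthogonally and returns to it orthogonally. We build such an arc in two halves.

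\textbf{Step 1 (shooting).} For each $x_0$, start the curve orthogonally from the axis point $(x_0,0)$. The standard singular-ODE analysis, as in \cite{D,DK,LW}, gives local existence and continuous dependence on $x_0$. The analogous shooting from $(-x_0,0)$ is obtained from the reflection $x\mapsto -x$, since the equation is invariant under it.

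We then want a symmetric solution, meaning one whose profile meets the $r$-axis $\{x=0\}$ perpendicularly. Reflecting such a profile across $\{x=0\}$ produces a closed arc that meets the axis orthogonally at both ends, so the rotation hypersurface is a closed hypersurface diffeomorphic to $\mathbb S^n$. Let $\varphi(x_0)$ denote the angle at which the curve first meets $\{x=0\}$. We will find two values $x_0^{(1)},x_0^{(2)}$ for which the curve crosses $\{x=0\}$ with $\varphi$ on opposite sides of $\pi/2$, excluding the round sphere. The intermediate value theorem then gives a perpendicular crossing.

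The hypothesis $\lambda\ge 4\sqrt{(n-1)/5}$ should enter here. For large $\lambda$, the curvature term $\lambda$ dominates near the axis, and the curve behaves like a small circle of radius roughly $n/\lambda$. We will use comparison with the linearization around a nearby circle, together with explicit inequalities of the form $\lambda^2\ge 16(n-1)/5$, to show two things. First, for $x_0$ close to the sphere radius $R$, the curve crosses with one sign of $\varphi-\pi/2$. Second, for $x_0$ near a second value, for instance $x_0$ slightly larger, or the limit $x_0\to\infty$ blown down, it crosses with the opposite sign and not at the round-sphere value.

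To guarantee that the solution found is not the sphere, we will count the oscillations of $r$ around $\sqrt{n-1}$, or equivalently use a winding or monotonicity invariant such as the number of zeros of $\theta-\pi/2$. The sphere has a specific count, and we choose the interval of $x_0$ where the count differs. This argument yields a "double-bump" or dumbbell-type profile.

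\textbf{Step 2 (embeddedness and mean convexity).} Along the constructed arc we must show three things:
\begin{itemize}
\item $x$ is monotone, or at least the graph over the axis is single-valued, so that the arc does not self-intersect;
\item $r>0$ in the interior of the arc;
\item $H=\lambda-\langle X,\nu\rangle\ge 0$.
\end{itemize}
Mean convexity is the condition that uses $\lambda>0$. Since $\langle X,\nu\rangle$ is bounded by $|X|$ and the curve stays in a bounded region controlled by barrier estimates (for example $|X|^2\le$ some explicit constant), the bound $\lambda\ge 4\sqrt{(n-1)/5}$ should make $\lambda-|X|\ge 0$ along the curve. We will prove an a priori bound on $|X|$ along the shooting curves, for instance $|X|^2\le 16(n-1)/5$, by differentiating $|X|^2$ and using the ODE. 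Note that Heilman's result forces $\Sigma$ to be non-convex, so the principal curvature $\theta'$ of the profile must change sign somewhere while $H$ stays nonnegative.

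The main obstacle is Step 1: controlling the shooting curves quantitatively enough to certify the sign change of $\varphi-\pi/2$ away from the sphere, while keeping the curve embedded. We would first try comparison with circle solutions and the linearized (Hermite-type) equation near the sphere, using the explicit $\lambda$ threshold to close the inequalities.
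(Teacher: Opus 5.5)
Your outline (rotational reduction, shooting, intermediate value theorem) is the right frame, but it has genuine gaps. Both steps that carry the content are left as intentions, and the mechanisms you guess for them are not the ones that work.

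\textbf{Step 1.} You never exhibit two parameter values at which the sign of $\varphi-\pi/2$ is actually known. Nor do you show that every curve in between reaches $\{x=0\}$ before returning to the axis, self-intersecting, or touching $\{x=0\}$ tangentially. Without that, $\varphi$ is undefined or discontinuous on the interval and the intermediate value theorem has nothing to act on; the regime ``$x_0\to\infty$ blown down'' supplies no such information.

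The paper resolves exactly this in three moves:
\begin{enumerate}
\item It passes to $\tilde\lambda=-\lambda<0$ and shoots from the symmetry line: $\gamma_\delta(0)=(0,\delta)$, $\theta=0$, $0<\delta<C_{\tilde\lambda}$.
\item It replaces the crossing angle by the discrete classes $\mathcal{C}_k(N_1,\dots,N_k)$. These are open in $\delta$, and their endpoint behaviour is fixed by the ODE lemmas of Section 3.
\item It analyses the two ends of the $\delta$-interval. For $\delta\to0$ the curves converge to $\bar\gamma_0\in\mathcal{C}_2(3,1)$ (this uses Heilman's rigidity theorem), so $\gamma_\delta\in\mathcal{C}_2(2,1)$. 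For $\delta\to C_{\tilde\lambda}$ one linearizes about the \emph{cylinder} $r=C_{\tilde\lambda}$, not about a sphere or circle. This gives $w''-xw'+\bigl(1+\frac{n-1}{C_{\tilde\lambda}^2}\bigr)w=0$, whose solution with $w(0)=1$, $w'(0)=0$ is a Kummer function with $\bigl\lceil\frac12\bigl(1+\frac{n-1}{C_{\tilde\lambda}^2}\bigr)\bigr\rceil$ positive zeros.
\end{enumerate}
That linearization is the only place the hypothesis enters. The condition $\lambda\ge4\sqrt{(n-1)/5}$ is equivalent to $C_{\tilde\lambda}^2\le\frac{n-1}{5}$, which gives at least three zeros and hence $\gamma_\delta\in\mathcal{C}_2(2,2)$ for $\delta$ near $C_{\tilde\lambda}$. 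The infimum $\delta_s$ of the $\mathcal{C}_2(2,2)$ range is then forced into $\mathcal{C}_2(2,3)$. That curve meets the axis orthogonally and has $\dot x>0$ throughout, which gives embeddedness. Its first graphical piece has an inflection point, so the hypersurface is non-convex and in particular not a round sphere. (Minor point: $x\mapsto -x$ alone sends $\lambda$ to $-\lambda$; the symmetry of the system is reflection combined with reversal of the arc-length parameter.)

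\textbf{Step 2.} The bound $|X|^2\le 16(n-1)/5$ is asserted with no argument. The hypothesis is a \emph{lower} bound on $\lambda$ and gives no control on the size of the profile. The estimate $|\langle X,\nu\rangle|\le|X|$ also discards the sign information that actually matters, so this route to mean convexity is unsupported.

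The paper's argument needs no size bound; it uses only $\lambda>0$ and $\dot x>0$ on the half-profile. After switching the normal, the $n-1$ rotational principal curvatures equal $\dot x/r>0$. At a point $q$ where $H$ is minimal, $\langle X,\nu\rangle$ is maximal, so $\kappa_i\langle X,e_i\rangle(q)=0$ for all $i$. If $H(q)\le0$, then $\kappa_n=H-\sum_{j<n}\kappa_j<0$, hence $X(q)\parallel\nu(q)$. Together with $\dot x>0$ this forces $\langle X,\nu\rangle(q)<0$, contradicting $\langle X,\nu\rangle(q)=\lambda-H(q)>0$.
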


\begin{rmk}
  In \cite{G}, by using the explicit expressions of the derivatives of the principal curvatures at the non-umbilical points of the surface, Guang obtained that any strictly mean convex $2$-dimensional $\lambda$-hypersurface is convex if $\lambda\leq0$.
  Later, by using the maximum principle, Lee \cite{L} showed that any compact embedded and mean convex $n$-dimensional $\lambda$-hypersurface is convex if $\lambda\leq0$.
  But for $\lambda>0$, we construct $\lambda$-hypersurfaces which are  mean convex and non-convex. These examples may carry meaningful information in probability theory $($also see \cite{S}$)$.

\end{rmk}

\begin{rmk}
For $-\frac{2}{\sqrt{3}}<\lambda<0$, Chang \cite{C} proved that there exists a compact embedded $\lambda$-curve with $2$-symmetry in $\R^{2}$, which is not a circle.
\end{rmk}

The remainder of this paper is organized as follows.
In section 2, we set up the mathematical framework by reducing the $\lambda$-hypersurface equation to a nonlinear system of ordinary differential equations \eqref{0316eq2.5} for the profile curves.

In section 3, we conduct a rigorous qualitative analysis of the local solutions to this ODE system. We classify the local geometric shapes of these curves into three distinct types and determine their maximal existence intervals and boundary behaviors.

In section 4, we introduce a global classification scheme to track the turning points and piece the local graphical solutions together. Crucially, we also establish the continuous dependence of these global classes on the initial parameters, which is a fundamental prerequisite for our shooting argument.

Sections 5 and 6 are devoted to analyzing two specific families of profile curves: those emanating from the $x$-axis ($\bar{\gamma}_b$) and those emanating from the $r$-axis ($\gamma_\delta$), respectively. By tracking the curves through their first two graphical segments, we determine their precise initial shape classifications depending on the initial parameters.

Finally, in section 7, we synthesize the analytical results from the previous sections to execute a continuity (shooting) argument. By continuously varying the initial parameter, we prove the existence of a specific profile curve, thereby completing the proof of our main result, theorem \ref{0316thm1.1}.

	\section{Preliminaries}\label{prel}
	
	Let $\mathbb{H}$ denote the open half plane, namely, $\mathbb{H} = \left\lbrace (x,r)\in\R^2: x\in\R, r > 0\right\rbrace$.
	Let $SO(n)$ denote the special orthogonal group and act on $\R^{n+1} = \left\lbrace (x,y): x\in\R, y\in\R^n\right\rbrace $ in the usual way and then we can identify the space of orbits $\R^{n+1}/SO(n)$ with the closed half plane $\overline{\mathbb{H}} = \left\lbrace (x,r)\in\R^2: x\in\R, r \ge 0\right\rbrace$ under the projection(see \cite{R})
	\[
	\Pi(x,y) = (x,|y|) = (x,r).
	\]
	If a hypersurface $\Sigma$ is invariant under the action of $SO(n)$, then the projection $\Pi(\Sigma)$ will give us a profile curve in the half plane, which can be parametrized by Euclidean arc length and written as $\gamma(s) = (x(s), r(s))$.
	Conversely, if we have a curve $\gamma(s) = (x(s), r(s)),\ \ s \in (a, b)$ parametrized by Euclidean arc length in $\overline{\mathbb{H}}$, then we can reconstruct the hypersurface by
	\begin{gather}
	{ X : (a, b) \times S^{n-1}(1) \to \R^{n+1} }, \notag \\
	(s,\alpha) \mapsto (x(s), r(s)\alpha). \label{0316eq2.1}
	\end{gather}
	Let
	\begin{equation}\label{0316eq2.2}
	\nu = (-\dot{r}, \dot{x}\,\alpha),
	\end{equation}
	where the dot the denotes taking derivative with respect to arc length $s$. A direct calculation shows that $\nu$ is a unit normal vector for the hypersurface.
	Then we can calculate that the principal curvatures of the hypersurface(see \cite{CW1,DD}):
	\begin{equation}\label{0426eq2.3}
	\begin{aligned}
	\kappa_i & = - \frac{\dot{x}}{r}, \ \ \ \ i=1,\,2,\,\dots,\,n-1, \\
	\kappa_n &= \dot{x}\,\ddot{r}-\ddot{x}\,\dot{r}.
	\end{aligned}
	\end{equation}
	Hence the mean curvature equals to
	\begin{equation}\label{0316eq2.3}
	H =  \dot{x}\,\ddot{r}-\ddot{x}\,\dot{r} - (n-1)\frac{\dot{x}}{r},
	\end{equation}
	and then, by (\ref{0316eq2.1}), (\ref{0316eq2.2}) and (\ref{0316eq2.3}), equation (\ref{0316eq1.1}) reduces to(see also \cite{CW1,DK,R})
	\begin{equation}\label{0316eq2.4}
	\dot{x}\,\ddot{r} - \ddot{x}\,\dot{r} = (\frac{n-1}{r} - r )\dot{x} +   x\,\dot{r} + \lambda,
	\end{equation}
	where $(\dot{x})^2 + (\dot{r})^2 = 1$.
	Let $\theta(s)$ denote the angle between the tangent vector of the profile curve and the $x$-axis, then (\ref{0316eq2.4}) can be written as the following system of differential equations:
	\begin{equation}\label{0316eq2.5}
	\left\lbrace
	\begin{aligned}
	\dot{x} &= \cos\theta,\\
	\dot{r} &= \sin\theta, \\
	\dot{\theta} &=  (\frac{n-1}{r} - r )\cos\theta +   x\,\sin\theta + \lambda.
	\end{aligned}
	\right.
	\end{equation}
	Let $P$ denote the projection from $\overline{\mathbb{H}} \times \R$ to $\overline{\mathbb{H}}$. Obviously, the curve $\gamma(s)= P(\Gamma(s))$ generates a $\lambda$-hypersurface via (\ref{0316eq2.1}) provided $\Gamma(s)$ is a solution to (\ref{0316eq2.5}).

	Letting $(x_0, r_0, \theta_0)$ be a point in $\mathbb{H}\times \mathbb{R}$, by an existence and uniqueness theorem of the solutions for first order ordinary differential equations, there is a unique solution $\Gamma(x_0, r_0, \theta_0)(s)$ to (\ref{0316eq2.5}) satisfying initial conditions $\Gamma(x_0, r_0, \theta_0)(0) = (x_0, r_0, \theta_0) $.
	Moreover, the solution depends smoothly on the initial conditions.
	Note that (\ref{0316eq2.5}) has a singularity at $r = 0$ due to the term $\frac{1}{r}$.
	But there is also an existence and uniqueness theorem for (\ref{0316eq2.5})  with initial conditions at singularity and $\theta_0 = \pi/2$ (see lemma \ref{lem2.2})
	In other words, there is a unique solution $\Gamma(b, 0, \pi/2)(s)$ to (\ref{0316eq2.5}) satisfying initial conditions $\Gamma(b, 0, \pi/2)(0) = (b, 0, \pi/2) $.

	For convenience, we henceforth denote $\Gamma(0, \delta, 0)(s)$, $P(\Gamma(0, \delta, 0)(s))$, $\Gamma(b, 0, \pi/2)(s)$ and $P(\Gamma(b, 0, \pi/2)(s))$ by $\Gamma_\delta(s)$, $\gamma_\delta(s)$, $\overline\Gamma_b(s)$ and $\overline\gamma_b(s)$ respectively.
	We also assume $\Gamma_\delta(s) = (x_\delta(s), r_\delta(s), \theta_\delta(s))$ and $\overline{\Gamma}_b(s) = (\bar{x}_b(s), \bar{r}_b(s), \bar{\theta}_b(s))$.

	When $\dot{x} = \cos\theta>0$ the profile curve $\gamma(s)$ can be written in the form $(x, u(x))$, by (\ref{0316eq2.5}), the function $u(x)$ satisfies the differential equation
	\begin{equation}\label{0528eq2.7}
	\frac{u''}{1 + (u')^2} = x\,u' - u + \frac{n-1}{u} + \lambda\sqrt{1+(u')^2}.
	\end{equation}
	It has a unique constant solution $u = \frac{\lambda + \sqrt{\lambda^2 + 4(n - 1) }}{2}$ and a unique ``round solution'' with radius $\frac{\lambda + \sqrt{\lambda^2 + 4n }}{2}$, namely, \[ u(x) = \left( \left( \frac{\lambda + \sqrt{\lambda^2 + 4n }}{2}\right)^2 - x^2 \right)^{\frac{1}{2}} .\]
	We henceforth denote $\frac{\lambda + \sqrt{\lambda^2 + 4(n - 1) }}{2}$ and $\frac{\lambda + \sqrt{\lambda^2 + 4n }}{2}$ by $C_\lambda$ and $R_\lambda$ respectively.

	When $\dot{x} = \cos\theta<0$ the profile curve $\gamma(s)$ can also be written in the form $(x, u(x))$, by (\ref{0316eq2.5}), the function $u(x)$ satisfies the differential equation
	\begin{equation}\label{240111eq2.8}
	\frac{u''}{1 + (u')^2} = x\,u' - u + \frac{n-1}{u} - \lambda\sqrt{1+(u')^2}.
	\end{equation}
	It has a unique constant solution $u =C_{-\lambda} = \frac{-\lambda + \sqrt{\lambda^2 + 4(n - 1) }}{2}$ and a unique ``round solution'' with radius $R_{-\lambda} = \frac{-\lambda + \sqrt{\lambda^2 + 4n }}{2}$, namely, \[ u(x) = \sqrt{R_{-\lambda}^2 -x^2}.\]

	When $\dot{r} = \sin\theta>0$ the profile curve can be written in the form $(f(r), r)$, by (\ref{0316eq2.5}), the function $f(r)$ satisfies the differential equation
	\begin{equation}\label{0526eq2.8}
	\frac{f''}{1 + (f')^2} =  ( r - \frac{n-1}{r}  )f' -f - \lambda\sqrt{1+(f')^2}.
	\end{equation}
	It has a unique constant solution $f = -\lambda$.

	When $\dot{r} = \sin\theta<0$ the profile curve can also be written in the form $(f(r), r)$, by (\ref{0316eq2.5}), the function $f(r)$ satisfies the differential equation
	\begin{equation}\label{231212eq2.9}
	\frac{f''}{1 + (f')^2} =  ( r - \frac{n-1}{r}  )f' -f + \lambda\sqrt{1+(f')^2}.
	\end{equation}
	It has a unique constant solution $f = \lambda$.
	Differentiating  the above two equations with respect to $r$, we obtain
	\begin{equation}\label{0526eq2.9}
	\frac{f'''}{1 + (f')^2} = \frac{2f'(f'')^2}{(1 + (f')^2)^2}+ ( r - \frac{n-1}{r}  )f'' + \frac{n-1}{r^2}f' -\frac{\lambda f'f''}{\sqrt{1 + (f')^2}}.
	\end{equation}
	\begin{equation}\label{231212eq2.10}
	\frac{f'''}{1 + (f')^2} = \frac{2f'(f'')^2}{(1 + (f')^2)^2}+ ( r - \frac{n-1}{r}  )f'' + \frac{n-1}{r^2}f' +\frac{\lambda f'f''}{\sqrt{1 + (f')^2}}.
	\end{equation}
	We conclude this section with two lemmas about the solutions to (\ref{0526eq2.8}) and (\ref{231212eq2.9}).
	\begin{lem}\label{0527lem2.1}
		Let  $f: (a,b) \to \mathbb{R}$ be a solution to \eqref{0526eq2.8} or \eqref{231212eq2.9}, $c \in (a,b)$ with $f''(c) = 0$. If $f'(c) = 0$ or $f'''(c) = 0$, then $f$ is constant. Moreover, $f'(c)f'''(c)>0$ provided $f$ is not constant.
	\end{lem}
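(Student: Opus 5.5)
We plan to evaluate the differentiated equations \eqref{0526eq2.9} and \eqref{231212eq2.10} at the point $c$. Since $f''(c)=0$, all terms containing $f''$ vanish and both equations reduce to
\[
\frac{f'''(c)}{1+(f'(c))^2} = \frac{n-1}{c^2}\,f'(c).
\]
Here $c>0$, because $f$ is a function of $r$ on an interval inside $r>0$. This identity already gives the last claim: once we know $f'(c)\neq 0$, we get $f'(c)f'''(c) = (n-1)(1+f'(c)^2)f'(c)^2/c^2>0$, using $n\ge 2$. It also shows that the two hypotheses $f'(c)=0$ and $f'''(c)=0$ are equivalent. So it suffices to prove that $f$ is constant whenever $f'(c)=f''(c)=0$.

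For this we use uniqueness for the ODE. Equation \eqref{0526eq2.8} (resp. \eqref{231212eq2.9}) is a regular second-order ODE on $(a,b)\subset(0,\infty)$, because its right-hand side is smooth in $(r,f,f')$ for $r>0$. The solution is therefore determined by $f(c)$ and $f'(c)=0$. Evaluating the equation at $c$ with $f'(c)=f''(c)=0$ gives $0=-f(c)-\lambda$ (resp. $0=-f(c)+\lambda$), so $f(c)=-\lambda$ (resp. $f(c)=\lambda$). The constant function $-\lambda$ (resp. $\lambda$) solves the equation, as noted in Section \ref{prel}, and it has the same data $(f(c),f'(c))$ at $c$. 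By uniqueness, $f$ equals this constant on $(a,b)$.

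The argument has no real obstacle. The only points needing care are that $c>0$, so the coefficient $\frac{n-1}{c^2}$ is finite and positive and the ODE is regular at $c$, and that $n\ge 2$ is used for the strict inequality. The "moreover" part then follows by contraposition: if $f$ is not constant, then $f'(c)\ne0$, and the displayed identity gives $f'(c)f'''(c)>0$.
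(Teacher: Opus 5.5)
Your proof is correct and follows the paper's route: you evaluate \eqref{0526eq2.9}/\eqref{231212eq2.10} at $c$ to get $\frac{f'''(c)}{1+f'(c)^2}=\frac{n-1}{c^2}f'(c)$, which ties $f'(c)=0$ to $f'''(c)=0$ and gives the sign of $f'(c)f'''(c)$. When $f'(c)=f''(c)=0$ you get $f(c)=\mp\lambda$ from the equation and conclude by ODE uniqueness against the constant solution. Your remarks that $c>0$ (so the ODE is regular at $c$) and that $n\ge 2$ (for strict positivity) make explicit two points the paper leaves implicit.
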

	\begin{proof}
		Suppose $f$ is a solution to \eqref{0526eq2.8}. If $f'(c) = f''(c) = 0$, it follows from \eqref{0526eq2.8} that $f(c) = -\lambda$. Thus, by the uniqueness of solutions to second-order ordinary differential equations, we conclude that $f = -\lambda$.
		If $f''(c) = f'''(c) = 0$, equation \eqref{0526eq2.9} implies that $f'(c) = 0$. Consequently, the preceding argument yields $f = -\lambda$.
		Finally, assume $f''(c) = 0$ and $f$ is not constant. By what we have just shown, we must have $f'(c) \neq 0$. Evaluating equation \eqref{0526eq2.9} at $c$ and multiplying both sides by $f'(c)$ immediately yields $f'(c)f'''(c) > 0$.
		The proof for the case where $f$ is a solution to \eqref{231212eq2.9} is entirely analogous.
	\end{proof}

	\begin{lem}\label{lem2.2}
		Given an integer \( n \geq 2 \) and \( \lambda, b \in \mathbb{R} \), then there exist \( \delta > 0 \) and a unique analytic function \( f(r; \varepsilon) \) defined on \([0, \delta] \times [-\delta, \delta]\) such that for each \( \varepsilon \in [-\delta, \delta] \), the function \( f(\cdot; \varepsilon) \) is a solution of the following singular initial value problem:
\[
\begin{cases}
\dfrac{f''}{1 + (f')^2} = \left(r - \dfrac{n-1}{r}\right)f' - f \pm \lambda \sqrt{1 + (f')^2} \\[2ex]
f(0) = b + \varepsilon, \quad f'(0) = 0.
\end{cases}
\]
Moreover, if \( \tilde{f}(\cdot; \varepsilon) \in C^1([0, \delta]) \cap C^2((0, \delta]) \) is also a solution of the above initial value problem, then \( \tilde{f}(\cdot; \varepsilon) = f(\cdot; \varepsilon) \).
	\end{lem}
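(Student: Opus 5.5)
We prove Lemma \ref{lem2.2} by a fixed-point argument on an integral reformulation of the singular equation. Write the equation in divergence form with respect to the singular term. Set $g=f'$. Multiplying by $r^{n-1}$, the equation becomes
\[
\bigl(r^{n-1}g\bigr)' = r^{n-1}\Bigl[\,g'\Bigl(1-\frac{1}{1+g^2}\Bigr) + r g - f \pm\lambda\sqrt{1+g^2}\,\Bigr]\cdot(\text{rearranged}),
\]
or more precisely
\[
g' + \frac{n-1}{r}(1+g^2)g = (1+g^2)\bigl(rg - f \pm \lambda\sqrt{1+g^2}\bigr).
\]
Since $g(0)=0$, near $r=0$ the term $\frac{n-1}{r}(1+g^2)g$ is a perturbation of $\frac{n-1}{r}g$. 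This gives
\[
\bigl(r^{n-1}g\bigr)' = r^{n-1}\Bigl[(1+g^2)\bigl(rg-f\pm\lambda\sqrt{1+g^2}\bigr) - \frac{n-1}{r}g^3\Bigr].
\]
Integrating from $0$ (the boundary term vanishes since $g$ is bounded) yields the system
\[
f(r)=b+\varepsilon+\int_0^r g(t)\,dt,\qquad g(r)=r^{1-n}\int_0^r t^{n-1}F\bigl(t,f(t),g(t)\bigr)\,dt,
\]
where $F(t,f,g)=(1+g^2)(tg-f\pm\lambda\sqrt{1+g^2})-\frac{n-1}{t}g^3$.

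\textbf{Contraction.} We work in the space of pairs $(f,g)$ with $g(r)=r\,h(r)$ and $h$ bounded. Then the $g^3/t$ term equals $t^2h^3$, which is harmless. On $X=\{(f,h)\in C([0,\delta])^2: |f-b|\le 1,\ |h|\le M\}$ with the sup norm, the map $T$ defined by the right-hand sides sends $X$ to itself and is a contraction for $\delta$ small. This holds uniformly in $\varepsilon\in[-\delta,\delta]$, because:
\begin{itemize}
\item the averaging operator $h\mapsto r^{-n}\int_0^r t^{n-1}\phi(t)\,dt$ has norm $\le 1/n$, and $\int_0^r$ gains a factor of $\delta$;
\item $F$ is smooth and hence Lipschitz on bounded sets;
\item the only non-small contribution to $h$ is $r^{-n}\int_0^r t^{n-1}(-f\pm\lambda)\,dt\approx(-b\pm\lambda)/n$, which is bounded by choosing $M>|b|+|\lambda|+2$.
\end{itemize}
The fixed point gives a $C^1$ solution that is $C^2$ on $(0,\delta]$, with $f'(0)=0$ and $f''(0)=(\pm\lambda-b-\varepsilon)/n$.

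\textbf{Analyticity.} For joint analyticity in $(r,\varepsilon)$, one option is to run the same contraction in a space of functions holomorphic in $(r,\varepsilon)$ on a small complex polydisc. The integral operators preserve holomorphy, since $r^{-n}\int_0^r t^{n-1}\phi\,dt=\int_0^1 s^{n-1}\phi(rs)\,ds$, and uniform limits of holomorphic functions are holomorphic. Alternatively, one can use a formal power series $f=\sum a_k(\varepsilon) r^{2k}$ with a majorant argument, since the recursion has the factor $(2k)(2k+n-2)$ that dominates. I would choose the complex-domain contraction; I expect this step to be the main technical obstacle, along with checking the Lipschitz bounds uniformly.

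\textbf{Uniqueness.} Suppose $\tilde f\in C^1([0,\delta])\cap C^2((0,\delta])$ solves the same problem. Multiplying its equation by $r^{n-1}$ and integrating from $r_0$ to $r$, then letting $r_0\to0$ and using $\tilde f'(r_0)\to0$, shows that $(\tilde f,\tilde f')$ satisfies the same integral system. Next, $\tilde g=\tilde f'$ satisfies $\tilde g=O(r)$, because the integral equation gives $|\tilde g(r)|\le r\cdot\sup|F|/n$ once $\tilde g$ is bounded near $0$. Hence $(\tilde f,\tilde g/r)$ lies in $X$ after possibly shrinking $\delta$. By the contraction property, a Gronwall-type estimate applied to $\sup_{[0,\rho]}|\tilde f-f|+|\tilde h-h|$ forces this quantity to vanish on a small interval. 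Standard ODE uniqueness away from $r=0$ then extends $\tilde f=f$ to all of $[0,\delta]$.
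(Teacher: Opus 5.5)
Your route is genuinely different from the paper's. The paper proves nothing by hand here. With $y=f-b-\varepsilon$, $p=y'$, it rewrites the problem as a Briot--Bouquet system $r\,p'=\phi(r,y,p,\varepsilon)$; the displayed $p'=\phi$ is missing the factor $r$. It observes that $\partial_p\phi(0,0,0,0)=-(n-1)$ is not a positive integer, and quotes Proposition~1 of Hsiang--Hsiang for the analytic solution and its analytic dependence on $\varepsilon$. Uniqueness in $C^1([0,\delta])\cap C^2((0,\delta])$ is then quoted from Theorem~2.2 of Liang.

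You reprove both black boxes instead:
\begin{itemize}
\item the weight $r^{n-1}$ turns the problem into a Volterra system, and $g=rh$ removes the $g^3/r$ term;
\item a contraction gives existence;
\item the same contraction on bounded holomorphic functions on a polydisc gives joint analyticity; the fixed point is real on real arguments because it is unique and $T$ is invariant under $(f,h)\mapsto\overline{(f,h)(\bar r,\bar\varepsilon)}$;
\item the causal structure of $T$, plus regular ODE uniqueness for $r>0$, gives the uniqueness clause.
\end{itemize}
The paper's version is two lines at the cost of two citations. Yours is self-contained and yields $f''(0)=(\pm\lambda-b-\varepsilon)/n$ explicitly.

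One detail of the contraction deserves a sentence. The $h$-component depends on $f$ through $-(1+t^2h^2)f$, so its Lipschitz constant in $f$ is about $1/n$, which is not small. It is still $<1$ because $n\ge2$. Alternatively, use the norm $\|f\|+\delta\|h\|$, which makes the contraction constant $O(\delta)$.

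\textbf{The step that fails as written} is your justification of $\tilde g=O(r)$ in the uniqueness part. $F(t,\tilde f,\tilde g)$ contains $-\frac{n-1}{t}\tilde g^3$, which is not bounded merely because $\tilde g$ is. Inserting $|\tilde g|\le C$ into the integral equation bounds the contribution of this term only by $C^3$, i.e.\ by $O(1)$, not $O(r)$.

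The claim itself is true, but you must use the sign of that term rather than estimate it as a source. Write $\tilde g'+\frac{n-1}{r}(1+\tilde g^2)\tilde g=G$, where $G=(1+\tilde g^2)\bigl(r\tilde g-\tilde f\pm\lambda\sqrt{1+\tilde g^2}\bigr)$ is bounded. Since $1+\tilde g^2\ge1$, the integrating factor $\mu(r)=\exp\int_\delta^r\frac{(n-1)(1+\tilde g(t)^2)}{t}\,dt$ satisfies $\mu(t)/\mu(r)\le(t/r)^{n-1}$ for $t<r$. Hence $\mu(r_0)\tilde g(r_0)\to0$ as $r_0\to0$, so $\tilde g(r)=\int_0^r\frac{\mu(t)}{\mu(r)}G(t)\,dt$ and $|\tilde g(r)|\le r\sup|G|/n$.

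Then $\tilde h=\tilde g/r$ is bounded, and it is continuous at $0$ by the integral equation. So $(\tilde f,\tilde h)\in X$ on a short interval, and the rest of your argument goes through.
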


\begin{proof}
	Let \( y(r) = f(r) - b - \varepsilon \), \( p(r) = y'(r) \). Then \( y \) and \( p \) satisfy
\[
y' = p
\]
\[
p' = \left[ (r^2 - n + 1)p - ry - rb - r\varepsilon \pm \lambda r \sqrt{1 + p^2} \right](1 + p^2) := \phi(r, y, p, \varepsilon).
\]
Clearly, \( \phi \) is analytic in the variables \( r, y, p, \varepsilon \), and we have
\[
\frac{\partial \phi}{\partial p}(0, 0, 0, 0) = -n + 1 \notin \mathbb{N}, \quad \frac{\partial \phi}{\partial y}(0, 0, 0, 0) = 0,
\]
\[
\frac{\partial \phi}{\partial \varepsilon} = -r(1 + p^2) \implies \frac{\partial^k \phi}{\partial \varepsilon^k}(0, 0, 0, 0) = 0, \text{ for any positive integer } k.
\]
Now applying proposition 1 in \cite{Hs5}, we obtain the existence, uniqueness, and dependence of the analytic solution.

Furthermore, since \( f \) is analytic,  \( f \) clearly belongs to the function space \( C^1([0, \delta]) \cap C^2((0, \delta]) \). Applying theorem 2.2 in \cite{Liang} yields \( \tilde{f} = f \).
\end{proof}

\section{Behavior of the solutions to (\ref{0526eq2.8}) and (\ref{231212eq2.9})}
To construct the desired compact $\lambda$-hypersurface in theorem \ref{0316thm1.1}, our strategy relies on a shooting method. Before applying this method, we must first rigorously classify the local geometric shapes of the profile curves. Therefore,
in this section, we analyze the qualitative behavior of the profile curves governed by equations (\ref{0526eq2.8}) and (\ref{231212eq2.9}). To geometrically classify the shapes of these curves, it is essential to understand the signs of the derivatives $f'$ and $f''$. The following lemma provides a fundamental property of the product $f'f''$.
	\begin{lem}\label{0527lem3.1}
		Let  $f: (a,b) \to \mathbb{R}$ be a solution to \eqref{0526eq2.8} or \eqref{231212eq2.9}, $c \in (a,b)$, we have the following assertions:
		\begin{enumerate}[1.]
			\item 	if $f'(c) f''(c) <0$, then $f'(r)f''(r) <0$ for $r \in (a, c]$,
			\item   if $f'(c)f''(c) > 0$, then $f'(r)f''(r) >0$ for $r \in [c, b)$.
		\end{enumerate}
	\end{lem}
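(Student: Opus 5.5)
The plan is a first-crossing (continuity) argument, with Lemma \ref{0527lem2.1} doing the work at the boundary point. Throughout we may assume $f$ is not constant; otherwise $f'f''\equiv 0$ and neither hypothesis can hold. By Lemma \ref{0527lem2.1}, at every zero $c_0$ of $f''$ we then have $f'(c_0)\neq 0$ and $f'(c_0)f'''(c_0)>0$. Put $g=f'f''$, which is $C^1$ on $(a,b)$ with
\[
g' = (f'')^2 + f'f'''.
\]

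For assertion 2, suppose $g(c)>0$ and, for contradiction, that $g$ is not positive on $[c,b)$. Let $c_1=\inf\{r\in[c,b): g(r)\le 0\}$. Then $c_1>c$, $g(c_1)=0$, and $g>0$ on $[c,c_1)$. Either $f'(c_1)=0$ or $f''(c_1)=0$.
\begin{itemize}
\item If $f''(c_1)=0$, Lemma \ref{0527lem2.1} gives $g'(c_1)=f'(c_1)f'''(c_1)>0$. So $g<0$ just to the left of $c_1$, contradicting $g>0$ on $[c,c_1)$.
\item If $f'(c_1)=0$ and $f''(c_1)\neq0$, then $g'(c_1)=(f''(c_1))^2>0$. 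This again forces $g<0$ just before $c_1$, the same contradiction.
\end{itemize}
The case $f'(c_1)=f''(c_1)=0$ is excluded by Lemma \ref{0527lem2.1}, since $f$ is nonconstant. In every case $g'(c_1)>0$ at a zero of $g$. Hence $g$ can only cross zero upward, so once positive it stays positive to the right.

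Assertion 1 follows from the same fact. If $g(c)<0$, let $c_2=\sup\{r\in(a,c]: g(r)\ge0\}$. At $c_2$ we have $g(c_2)=0$ and $g<0$ on $(c_2,c]$, so $g'(c_2)\le 0$. This contradicts $g'(c_2)>0$ from the case analysis above.

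The main obstacle is checking that the third-derivative information from Lemma \ref{0527lem2.1} applies at every zero of $g$, including the degenerate case where $f'$ and $f''$ vanish together. Lemma \ref{0527lem2.1} handles this directly: such a double zero would force $f$ to be the constant solution.

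The argument is the same for \eqref{0526eq2.8} and \eqref{231212eq2.9}, since Lemma \ref{0527lem2.1} covers both.
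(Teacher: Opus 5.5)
Your proof is correct and is essentially the paper's argument: both are first-crossing arguments whose contradiction comes from the sign condition $f'f'''>0$ at zeros of $f''$ supplied by Lemma \ref{0527lem2.1}. The difference is only in packaging. You track $g=f'f''$ and handle zeros of $f'$ via $g'=(f'')^2>0$, while the paper tracks the first zero of $f''$ and keeps $f'$ away from $0$ by monotonicity of $f'$. Your version has the small advantage of proving both assertions at once, and it even shows that $g$ has at most one zero on $(a,b)$.
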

	\begin{proof}
		We adopt a contradiction argument from \cite{D}.
		We only prove the first part of the lemma; the second part can be proved similarly.
		If $f'(c) > 0,\ f''(c) <0$, we want to show that $f''(r) <0$ for  $r \in (a, c]$ which yields that $f'(r) > 0$ and then $f'(r)f''(r) <0$ for  $r \in (a, c]$.
		By the continuity of $f$, we know that $f''(r) <0$ when $r$ is near $c$.
		Hence, if $f''(r) \ge 0$ for some $r \in (a, c]$, then there must be a point $\bar{c} \in (a,c)$ such that $f''(\bar{c}) = 0$ and $f''(r) < 0$ for $r \in (\bar{c}, c]$.
		The choice of $\bar{c}$ implies that $f'(\bar{c}) > f'(c) > 0$ and $f'''(\bar{c}) \le 0$ which contradicts lemma \ref{0527lem2.1}.
		If $f'(c) < 0,\ f''(c) >0$, the proof is similar to the above.
	\end{proof}

  This lemma, together with lemma \ref{0527lem2.1}, shows the following corollary.
	\begin{cor}\label{0528cor3.1}
		Let $f(a, b) \to \mathbb{R}$ be a solution to \eqref{0526eq2.8} or \eqref{231212eq2.9} which is not constant, let $c \in (a,b)$.
		\begin{enumerate}[1.]
			\item 	If $f'(c) =0$, then $f''(r) < 0$ or $f''(r) > 0$  for $r$ in $(a,b)$.
			In the former case, we have that $f'(r)>0$ for $r \in (a,c)$ and that $f'(r) < 0$ for $r \in (c,b)$.
			In the latter case, we have that $f'(r)<0$ for $r \in (a,c)$ and that $f'(r) > 0$ for $r \in (c,b)$.
			\item   If $f''(c) =0$, then $f'(r) < 0$ or $f'(r) > 0$  for $r$ in $(a,b)$.
			In the former case, we have that $f''(r) > 0$ for $r \in (a,c)$ and that $f''(r) < 0$ for $r \in (c,b)$.
			In the latter case, we have that $f''(r) < 0$ for $r \in (a,c)$ and that $f''(r) > 0$ for $r \in (c,b)$.
		\end{enumerate}
	\end{cor}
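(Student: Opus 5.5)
The corollary follows from Lemma \ref{0527lem2.1} and Lemma \ref{0527lem3.1} once we note a basic fact. For a nonconstant solution, the zeros of $f'$ and the zeros of $f''$ are isolated. Moreover, at a zero of one of them the other does not vanish. Indeed, if $f'(c)=f''(c)=0$ at some point, Lemma \ref{0527lem2.1} forces $f$ to be constant. Both parts come from applying Lemma \ref{0527lem3.1} just to the left and just to the right of the distinguished point $c$.

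For part 1, suppose $f'(c)=0$. Then $f''(c)\neq 0$; say $f''(c)<0$, the other case being symmetric. Since $f''$ is continuous, $f''<0$ on a neighborhood of $c$, and therefore $f'>0$ slightly to the left of $c$ and $f'<0$ slightly to the right. Take any $c_1<c$ in this neighborhood. Then $f'(c_1)f''(c_1)<0$, so part 1 of Lemma \ref{0527lem3.1} gives $f'f''<0$ on $(a,c_1]$. In particular neither $f'$ nor $f''$ vanishes there, so both keep their signs: $f'>0$ and $f''<0$ on $(a,c_1]$. Letting $c_1\to c$ gives $f''<0$ and $f'>0$ on $(a,c)$.

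On the right, take $c_2>c$ in the neighborhood. Then $f'(c_2)<0$ and $f''(c_2)<0$, so $f'(c_2)f''(c_2)>0$, and part 2 of Lemma \ref{0527lem3.1} gives $f'f''>0$ on $[c_2,b)$. Hence the signs are constant there: $f'<0$ and $f''<0$ on $[c_2,b)$. Together with $f''(c)<0$, this yields $f''<0$ on all of $(a,b)$, with $f'>0$ on $(a,c)$ and $f'<0$ on $(c,b)$, as claimed.

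For part 2, suppose $f''(c)=0$. Then $f'(c)\neq0$; say $f'(c)<0$. By Lemma \ref{0527lem2.1}, $f'(c)f'''(c)>0$, so $f'''(c)<0$. Hence $f''>0$ just left of $c$ and $f''<0$ just right of $c$, while $f'<0$ near $c$.

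Just left of $c$ we then have $f'f''<0$. Part 1 of Lemma \ref{0527lem3.1} propagates this to $(a,c)$, and the signs persist there: $f'<0$ and $f''>0$. Just right of $c$ we have $f'f''>0$. Part 2 propagates this to $(c,b)$: $f'<0$ and $f''<0$. Combined with $f'(c)<0$, this gives $f'<0$ on $(a,b)$ and the stated signs of $f''$. The case $f'(c)>0$ is symmetric.

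The only delicate point is the sign-persistence step. Lemma \ref{0527lem3.1} controls only the product $f'f''$, and we need the individual signs. This is handled by observing that on an interval where the product never vanishes, neither factor vanishes, so by continuity each keeps the sign it has near $c$.
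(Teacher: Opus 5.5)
Your proof is correct and follows the route the paper indicates when it says the corollary comes from Lemma \ref{0527lem3.1} together with Lemma \ref{0527lem2.1}. Lemma \ref{0527lem2.1} excludes simultaneous vanishing of $f'$ and $f''$ and fixes the sign of $f'''$ at an inflection point, and Lemma \ref{0527lem3.1} propagates the sign of $f'f''$ to each side of $c$. Your remark that a nonvanishing product forces each factor to keep a constant sign is the detail the paper leaves implicit.
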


Corollary \ref{0528cor3.1} ensures that the critical points and inflection points of the profile curve are strictly separated. This observation allows us to rigorously categorize the solutions into three distinct geometric classes as follows.
\begin{defn}\label{def3.1}
	Let $f: (a, b) \to \mathbb{R}$ be a solution to \eqref{0526eq2.8} or \eqref{231212eq2.9} which is not constant.
	\begin{enumerate}[1.]
	\item $f$ is said to be of type 1 if there is a unique point in $(a,b)$ such that $f'=0$.
	\item $f$ is said to be of type 2 if there is a unique point in $(a,b)$ such that $f''=0$.
	\item $f$ is said to be of type 3 if there is no point in $(a,b)$ such that $f'f''=0$.
	\end{enumerate}
\end{defn}

Having classified the solutions into distinct classes, we now investigate their boundary behaviors. The next few propositions (proposition \ref{231206lem3.5}, \ref{prop:24-7-14}, \ref{231204lem3.3}, \ref{240112lem3.3}, \ref{231211lem3.3}) determine the maximal existence intervals and the limits of the solutions at their endpoints depending on the signs of $f'$ and $f''$.

If $f'(r)f''(r)$ is positive for $r$ close to the left endpoint of the maximal existence intervals, we have:
\begin{prop}\label{231206lem3.5}
	Let $f: (a, b) \to \mathbb{R}$ be a left maximally extended solution to \eqref{0526eq2.8} or \eqref{231212eq2.9}, if $f'f'' > 0$ on $(a, b)$, then $a=0$ and $\lim_{r \to 0^+}f'(r)=0$.
\end{prop}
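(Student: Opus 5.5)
Throughout, assume $f'>0$ and $f''>0$ on $(a,b)$; the case $f'<0$, $f''<0$ is symmetric after replacing $f$ by $-f$ and $\lambda$ by $-\lambda$, which swaps \eqref{0526eq2.8} and \eqref{231212eq2.9}. By Lemma \ref{0527lem3.1} part 2, $f'f''>0$ holds on all of $(a,b)$ anyway, so the sign hypothesis is consistent. Since $f'' > 0$, $f'$ is increasing and positive. As $r \downarrow a$, $f'$ therefore decreases to a limit $L \in [0, f'(c))$, and $f$ is monotone and bounded as $r\downarrow a$, so $f$ has a finite limit as well.

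\textbf{Step 1: show $a=0$.} Suppose $a>0$. Then $f$, $f'$ and $f$'s limit are all finite at $a$. The right-hand side of the equation is therefore bounded near $a$, since $r$ stays away from $0$. Hence $f''/(1+(f')^2)$ is bounded, so $f''$ is bounded near $a$. This means $(f,f')$ extends continuously to $a$ with finite data. Standard ODE theory, with the equation regular at $r=a>0$, then lets the solution extend to the left of $a$. This contradicts left maximality, so $a=0$.

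\textbf{Step 2: show $L=0$.} Suppose $L>0$. Near $r=0$ we have $f' \ge L$, $f$ bounded, and $(r-\frac{n-1}{r})f' \le -\frac{(n-1)L}{2r}$ for small $r$. The remaining terms $-f \pm\lambda\sqrt{1+(f')^2}$ are bounded, because $f'$ is bounded on $(0,c]$, lying between $L$ and $f'(c)$. So for small $r$,
$$\frac{f''}{1+(f')^2} \le -\frac{(n-1)L}{2r}+C<0,$$
which contradicts $f''>0$. Hence $L=0$.

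The main subtlety is Step 1: we must check that boundedness of $f'$ comes for free from monotonicity. It does, because $f'$ is increasing and positive, so $0<f'<f'(c)$ on $(a,c)$. Step 2 is then immediate, since the singular term $-\frac{n-1}{r}f'$ dominates when $f'$ stays bounded away from $0$. In the symmetric case, where $f'<0$ and $f''<0$, the same term becomes large and positive while $f''<0$, giving the same contradiction.
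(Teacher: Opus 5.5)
Your proof is correct and follows essentially the same route as the paper: monotonicity gives finite limits of $f$ and $f'$ at $a$, so left maximality forces $a=0$, and then the singular term $-\frac{n-1}{r}f'$ would give $f''$ the wrong sign near $0$ unless $f'(0^+)=0$. One harmless slip: $f\mapsto -f$ alone already swaps \eqref{0526eq2.8} and \eqref{231212eq2.9} with the same $\lambda$, and also negating $\lambda$ would not swap them; the reduction still holds because the statement covers both equations and all $\lambda$, and your Step 2 treats $\pm\lambda$ uniformly anyway.
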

\begin{proof}
	If $f' < 0$ and $f''< 0$ on $(a, b)$, then $\lim_{r \to a^+}f(r)$ and $\lim_{r \to a^+}f'(r)$ are both finite, which follows that $a=0$ due to that $f: (a, b) \to \mathbb{R}$ is maximally extended.
	Now, by (\ref{0526eq2.8}) or (\ref{231212eq2.9}), $\lim_{r \to 0^+}f'(r)$ must be zero since $\lim_{r \to 0^+}f'(r)<0$ leads to $\lim_{r \to 0^+}f''(r)=\infty.$
	If $f' > 0$ and $f''> 0$ then the argument is similar to that above.
\end{proof}

If $f'(r)f''(r)$ is positive for $r$ close to the right endpoint of the maximal existence intervals, we have:
\begin{prop}\label{prop:24-7-14}
	Let $f: (a, b) \to \mathbb{R}$ be a right maximally extended solution to \eqref{0526eq2.8}. If $f' > 0$ and $f''> 0$ on $(a, b)$, then $b > C_\lambda$; if $f' < 0$ and $f''< 0$ on $(a, b)$, then $b > C_{-\lambda}$.
\end{prop}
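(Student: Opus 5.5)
The plan is to argue by contradiction through the arc-length system \eqref{0316eq2.5}, by locating the point where the graph $x=f(r)$ stops being a graph. Treat first the case $f'>0$, $f''>0$ on $(a,b)$, and suppose $b\le C_\lambda$; in particular $b<\infty$. Since $f'$ is positive and increasing, $f$ is increasing. Only three things can happen as $r\to b^-$.
\begin{enumerate}[(i)]
\item $f$ and $f'$ have finite limits. Then $b>0$ and the equation is regular at $r=b$, so the solution extends past $b$. This contradicts right maximality.
\item $f\to+\infty$ with $f'$ bounded. This is impossible, because $f$ is Lipschitz on the bounded interval $[c,b)$.
\item $f'\to+\infty$. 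This is the case that remains.
\end{enumerate}

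In case (iii) we pass to the profile curve. On the range of \eqref{0526eq2.8} we have $\sin\theta>0$ and $f'=\cot\theta$. The conditions $f'>0$, $f''>0$ mean that $\theta\in(0,\pi/2)$ and that $\theta$ is decreasing in $s$. Moreover $f'\to+\infty$ exactly when $\theta\to0^+$.

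The curve $\gamma$ is a solution of the regular system \eqref{0316eq2.5} with $r$ bounded away from $0$. Hence it continues smoothly to a parameter $s_0$ with $\theta(s_0)=0$ and $r(s_0)=b$, and $r$ attains a local maximum there. At that point $x(s_0)$ is finite, since $\dot x=\cos\theta$ is bounded and the arc length up to $s_0$ is finite. This also shows that $f$ stays bounded.

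Since $\theta>0$ before $s_0$ and $\theta(s_0)=0$, we get $\dot\theta(s_0)\le0$. Evaluating \eqref{0316eq2.5} at $s_0$ gives
\[
\dot\theta(s_0)=\frac{n-1}{b}-b+\lambda\le 0,
\qquad\text{that is,}\qquad b^2-\lambda b-(n-1)\ge0 .
\]
This forces $b\ge C_\lambda$, because $C_\lambda$ is the positive root of $r^2-\lambda r-(n-1)=0$.

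Strictness is the only delicate step. If $b=C_\lambda$, then at $s_0$ we have $(x,r,\theta)=(x(s_0),C_\lambda,0)$. The horizontal line $r\equiv C_\lambda$, $\theta\equiv0$, $x=x(s_0)+s-s_0$, is a solution of \eqref{0316eq2.5} through this same point. By uniqueness for the regular ODE, the profile curve must be that line. But then $f'=\cot\theta$ cannot be finite on any interval, so this contradicts $f$ being a graph over $(a,b)$ with $f''>0$. Hence $b>C_\lambda$.

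The case $f'<0$, $f''<0$ is handled the same way. Here $\theta\in(\pi/2,\pi)$ and $f'=\cot\theta$ is decreasing, so $\theta$ increases towards $\pi$. The maximum of $r$ is reached at a point with $\theta(s_0)=\pi$, where $\cos\theta=-1$. The condition $\dot\theta(s_0)\ge0$ now reads
\[
-\Big(\frac{n-1}{b}-b\Big)+\lambda\ge0,
\qquad\text{that is,}\qquad b^2+\lambda b-(n-1)\ge0 .
\]
This gives $b\ge C_{-\lambda}$. Equality is again excluded by uniqueness, this time comparing with the line $r\equiv C_{-\lambda}$, $\theta\equiv\pi$.

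I expect the main care to lie in two places. The first is justifying that maximality forces $f'\to\pm\infty$ at a finite point of the curve, rather than $f$ blowing up; this is where boundedness of $f'$ on compact pieces and the Lipschitz bound are used. The second is the equality case, which needs the uniqueness comparison with the constant solution rather than the first-order sign condition alone.
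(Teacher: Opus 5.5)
There is a genuine gap in case (iii), at exactly the step you flagged as delicate. You assert that the solution of \eqref{0316eq2.5} ``continues smoothly to a parameter $s_0$ with $\theta(s_0)=0$''. You then say $x(s_0)$ is finite because ``the arc length up to $s_0$ is finite''. This is circular. Regularity of the system and $r\ge r(s_1)>0$ only show that the solution exists as long as $\theta$ stays in $(0,\pi/2)$. They do not show that $\theta$ reaches $0$ at finite arc length.

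The scenario you have not excluded is $\theta(s)\downarrow 0$ and $r(s)\uparrow b$ as $s\to\infty$, with $x(s)\to+\infty$. In that case the profile curve is asymptotic to the horizontal line $r=b$. In graph terms, $f\to+\infty$ and $f'\to+\infty$ simultaneously as $r\to b^-$. Your Lipschitz argument in (ii) only excludes $f\to\infty$ with $f'$ bounded. For $b=C_\lambda$ this scenario is the natural danger, since $r\equiv C_\lambda$ is itself a solution. Both your inequality $\dot\theta(s_0)\le 0$ and your uniqueness comparison need an actual point $(x(s_0),b,0)$ on the curve. The same omission occurs in the case $f'<0$, $f''<0$, with $x\to-\infty$ and $\theta\uparrow\pi$.

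The missing step is the one the paper supplies. For $a<r<b\le C_\lambda$ one has $r-\frac{n-1}{r}-\lambda<0$. Combine \eqref{0526eq2.8} with $f'>0$, $f''>0$, and with $\sqrt{1+(f')^2}<1+f'$ if $\lambda<0$, resp.\ $\sqrt{1+(f')^2}>f'$ if $\lambda\ge0$. This gives $f<-\lambda$, resp.\ $f<0$, on $(a,b)$. So the increasing function $f$ has a finite limit at $b$. The arc length $\int_c^b\sqrt{1+(f')^2}\,dr\le (b-c)+\lim_{r\to b^-}f(r)-f(c)$ is then finite, and your $s_0$ exists.

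Alternatively, you can kill the infinite-length scenario directly. There $r\le C_\lambda$ and $\cos\theta>0$ give $\dot\theta\ge\lambda(1-\cos\theta)+x\sin\theta\ge(x-|\lambda|)\sin\theta$. This is positive once $x>|\lambda|$, contradicting that $\theta$ decreases.

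With this step added, your route is valid and differs from the paper's mainly in packaging. For $b<C_\lambda$, the paper shows directly that the right side of \eqref{0526eq2.8} tends to $-\infty$, so $f''<0$ near $b$, without needing $f$ bounded. It uses the bound on $f$ only for $b=C_\lambda$. Your version reads the same information off the sign of $\dot\theta$ at the horizontal tangency, and treats all signs of $\lambda$ at once.
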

\begin{proof}
	We adopt a contradiction argument  from \cite{D}. We only consider the case of $\lambda <0$, the proof for the case of $\lambda \ge 0$ is similar. Suppose $f' > 0$, $f''> 0$ on $(a, b)$ and $b \le C_\lambda$, then $\lim_{r \rightarrow b}f(r) = \infty$ or $\lim_{r \rightarrow b}f'(r) = \infty$ (otherwise $f$ is not right maximally extended), that is $f$ blows up at $b$. If $b < C_\lambda$, by (\ref{0526eq2.8}) we have
	\begin{align}
	\frac{f''}{1 + (f')^2}
	&=  ( r - \frac{n-1}{r}  )f' -f - \lambda\sqrt{1+(f')^2}\nonumber\\
	&<   ( r - \frac{n-1}{r}  )f' -f - \lambda (1+f') \nonumber\\
	&=   ( r - \frac{n-1}{r} -\lambda )f' -f - \lambda,\nonumber
	\end{align}
	which implies that $f'' < 0$ as $r \rightarrow b$.
	This contradicts $f''>0$ on $(a,b)$.
	On the other hand, the existence of the constant solution to (\ref{0528eq2.7}) prevents $b$ from being equal to $C_\lambda$.
	To see this, suppose $b = C_\lambda$. The previous calculation implies that
	\[
	f
	< ( r - \frac{n-1}{r} - \lambda )f' - \frac{f''}{1 + (f')^2} - \lambda\\
	<   - \lambda
	\] for $r \in (a,b)$, and therefore there exists $x_* \le -\lambda $ such that $\lim_{r \rightarrow b}f(r) = x_*$.
	Near the point $(x_*, b)$, we write the curve $(f(r), r)$ as $(x,u(x))$, where $u$ satisfies the differential equation (\ref{0528eq2.7}).
	Now, $u(x_*) = C_\lambda$ and $u'(x_*) = 0$, and by the uniqueness of solutions to equation (\ref{0528eq2.7}), $u$ must be the constant function $u(x) = C_\lambda$. This contradicts the fact  that $(f(r), r)$ agrees with $(x,u(x))$ near $(x_*,b)$. The second part of the lemma can be proved similarly.
\end{proof}

If $f'(r)f''(r)$ is negative for $r$ close to the left endpoint of the maximal existence intervals, we have:
\begin{prop}\label{231204lem3.3}
	Let $\lambda<0$ and $f: (a, b) \to \mathbb{R}$ be a left maximally extended solution to \eqref{231212eq2.9}.
	If $f' < 0$ and $f''> 0$ on $(a, b)$, then $0<a < C_\lambda$ and $\lim_{r \to a^+}f(r)<\infty$; if $f' > 0$ and $f''< 0$ on $(a, b)$, then $0<a < C_{-\lambda}$ and $\lim_{r \to a^+}f(r)>-\infty$.
\end{prop}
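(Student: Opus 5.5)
The plan is to analyse the first case ($f'<0$, $f''>0$) in three steps: first $a>0$, then finiteness of $\lim_{r\to a^+}f$, then $a<C_\lambda$. The second case ($f'>0$, $f''<0$) should follow by the same argument with the roles of $\lambda$ and $-\lambda$ exchanged, which is the reflection symmetry between \eqref{0526eq2.8} and \eqref{231212eq2.9} noted in Section 2. Fix $c\in(a,b)$. Since $f''>0$, $f'$ is increasing, so $f'(r)\le f'(c)<0$ for $r\in(a,c]$. Moreover $|f'|$ increases as $r$ decreases, and $f$ increases as $r\downarrow a$.

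\emph{Step 1: $a>0$.} Suppose $a=0$. I would divide \eqref{231212eq2.9} by $\sqrt{1+(f')^2}$ and use the quantity $\phi=f'/\sqrt{1+(f')^2}\in(-1,0)$, whose derivative is $\phi'=f''/(1+(f')^2)^{3/2}$. This gives
\[
\phi'=\Bigl(r-\frac{n-1}{r}\Bigr)\frac{f'}{\sqrt{1+(f')^2}}-\frac{f}{\sqrt{1+(f')^2}}+\lambda .
\]
The first term is at least $\frac{(n-1)k}{r}-rk'$, where $k=|f'(c)|/\sqrt{1+f'(c)^2}>0$ and $k'$ is a fixed constant. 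For the second term,
\[
f(r)\le f(c)+\int_r^c|f'(t)|\,dt\le f(c)+c\,|f'(r)|,
\]
using that $|f'(t)|\le|f'(r)|$ for $t>r$. Hence $f/\sqrt{1+(f')^2}$ is bounded above uniformly on $(0,c]$. Integrating $\phi'$ from $r$ to $c$ then gives $\phi(c)-\phi(r)\ge (n-1)k\log(c/r)-C$, which tends to infinity as $r\to0^+$. This contradicts $|\phi|<1$, so $a>0$.

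\emph{Step 2: the limit of $f$ is finite.} With $a>0$ and left maximality, either $f\to+\infty$ or $f'\to-\infty$ as $r\to a^+$. I would pass to the arc-length system \eqref{0316eq2.5}, whose solution continues as long as $r>0$, and argue that $x=f(r)$ stays bounded. In $(x,r)$ coordinates the curve moves with $r$ decreasing inside the band $a<r<c$ and $\theta$ monotone. If $x\to+\infty$ along infinite arc length, the term $x\sin\theta$ in $\dot\theta$ would dominate and force $\theta$ to turn past the horizontal in finite arc length. That contradicts the graphicality over $r$ on the whole interval $(a,b)$, so the curve reaches $r=a$ at a finite point $(x_*,a)$ with horizontal tangent. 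This is the step I expect to be the main obstacle: it requires a quantitative estimate showing that the $x\sin\theta$ term cannot be absorbed by the other terms, in the spirit of \cite{D}.

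\emph{Step 3: $a<C_\lambda$.} Near $(x_*,a)$, with $\dot x>0$, write the curve as $r=u(x)$ solving \eqref{0528eq2.7}, where $u(x_*)=a$ and $u'(x_*)=0$. The condition $f''>0$ together with $f'<0$ forces $x_*$ to be a local minimum of $u$, so $u''(x_*)\ge0$. Evaluating \eqref{0528eq2.7} at $x_*$ then gives $-a+\frac{n-1}{a}+\lambda\ge0$, that is, $a\le C_\lambda$. Equality $a=C_\lambda$ is excluded exactly as in Proposition \ref{prop:24-7-14}: by uniqueness for \eqref{0528eq2.7}, $u$ would be the constant solution $C_\lambda$, which contradicts $f'<0$.
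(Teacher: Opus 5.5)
Your Step 1 is correct, and it is neater than the paper's route. The paper proves $a>0$ last, via $\frac{d}{dr}\log\sin\beta$ with $\beta=-\arctan f'$, and that step needs the upper bound on $f$ obtained earlier. Your estimate $f(r)\le f(c)+c|f'(r)|$ bounds $f/\sqrt{1+(f')^2}$ without any a priori bound on $f$, and it uses no sign of $\lambda$. Step 3 is the argument of Proposition \ref{prop:24-7-14} and is fine once a finite endpoint $(x_*,a)$ is available. The paper orders things differently: it gets $a<C_\lambda$ first by the blow-up argument of Proposition \ref{prop:24-7-14}, then finiteness of $f$, then $a>0$.

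The gap is Step 2. It carries the whole claim $\lim_{r\to a^+}f(r)<\infty$, and Step 3 depends on it. You leave it unproved, and the mechanism you sketch points the wrong way. On this piece $\theta\in(-\pi/2,0)$ and $\dot\theta=-f''\dot r/(1+(f')^2)>0$. If $x\to+\infty$, the term $x\sin\theta<0$ pushes $\theta$ \emph{away} from the horizontal, and it does not dominate anyway. Write the hypothetical end as a graph $r=u(x)$ on $(x_0,\infty)$; then $u'<0<u''$ and $u$ decreases to $a$. Convexity gives $\tfrac{x}{2}|u'(x)|\le u(x/2)-u(x)\to0$, so $x\sin\theta\to0$.

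The contradiction has to come from the other terms. Since $u'\to0$, \eqref{0528eq2.7} forces $u''/(1+(u')^2)\to\frac{n-1}{a}-a+\lambda$. This limit must vanish because $u'$ converges, so $a=C_\lambda$. But then $u>C_\lambda$ gives $-u+\frac{n-1}{u}+\lambda<0$, and $xu'<0$. The remaining term $\lambda(\sqrt{1+(u')^2}-1)$ is $\le0$ for $\lambda<0$ and is $o(x|u'|)$ for $\lambda>0$. Hence $u''<0$ for large $x$, a contradiction.

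The missing idea is either this argument or the paper's barrier argument. In the paper's version, on $(a,c_1)$ with $c_1<C_\lambda$, $\phi=-f'$ satisfies $\phi''\ge-\frac{1}{M_1}\phi'\phi^2$ by \eqref{231212eq2.10}. Comparison with $K/\sqrt{r-a-\varepsilon}$ then yields $|f'|\le K/\sqrt{r-a}$, which is integrable.

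Note also that your reduction of the second case via $f\mapsto-f$ turns it into the first case with $-\lambda>0$. So whatever fills Step 2 must also work for positive $\lambda$. This is exactly where the paper has to modify its comparison argument by introducing the auxiliary $\tilde\lambda$.
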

\begin{proof}
	Assume $f' < 0$ and $f''> 0$ on $(a, b)$. The proof of $a < C_\lambda$ is similar to that of proposition \ref{prop:24-7-14}.

	Next we prove that $\lim_{r \to a^+}f(r)<\infty$ using a method from \cite{D}.
	Choose $c_1$ in $(a,\min\left\lbrace b, C_\lambda\right\rbrace )$.
	Then, for $r\le c_1$, we have $r-\dfrac{n-1}{r}-\lambda \le -\dfrac{1}{M_1}$ for some $M_1>0$.
	Let $\phi = - f' $. Then $\phi>0$ and $\phi'<0$ for $r \in (a,b)$.
	Using (\ref{231212eq2.10}), we have\[ \phi'' \ge -\dfrac{1}{M_1}\phi'\phi^2\] when $r\in (a,c_1)$.
	Fix $\varepsilon>0$, and let\[ \phi_\varepsilon(r) = \dfrac{\phi(c_1)\sqrt{c_1-(a+\varepsilon)}+\sqrt{3M_1}}{\sqrt{r-(a+\varepsilon)}}.\]
	Then \[ \phi_\varepsilon'' = -\dfrac{3}{2}\dfrac{1}{(\phi(c_1)\sqrt{c_1-(a+\varepsilon)}+\sqrt{3M_1})^2}\phi_\varepsilon'\phi_\varepsilon^2 \le -\dfrac{1}{M_1}\phi_\varepsilon'\phi_\varepsilon^2\] for $r\in (a+\varepsilon, c_1)$.
	Note that $\phi_\varepsilon(c_1)> \phi(c_1)$ and $\phi_\varepsilon(a + \varepsilon)> \phi(a + \varepsilon)$.
	If $\phi> \phi_\varepsilon$ at some point in $(a+\varepsilon, c_1)$, then $\phi - \phi_\varepsilon$ must achieve a positive maximum in $(a+\varepsilon, c_1)$.
	At such a point, \[ 0 \ge (\phi - \phi_\varepsilon)'' \ge -\dfrac{1}{M_1}\phi'(\phi^2 - \phi_\varepsilon^2) > 0, \] which is a contradiction.
	Therefore, $\phi_\varepsilon \ge \phi$.
	Taking $\varepsilon \to 0$, we have the estimate \[ \phi(r) \le \dfrac{\phi(c_1)\sqrt{c_1-a}+\sqrt{3M_1}}{\sqrt{r-a}} \] for $r \in (a, c_1)$.
	Integrating from $r$ to $c_1$,  we have \[ f(r) - f(c_1) \le 2(\phi(c_1)\sqrt{c_1-a}+\sqrt{3M_1})(\sqrt{c_1 - a} -\sqrt{r -a}). \] and then \[ \lim_{r \to a^+}f(r) \le 2(\phi(c_1)\sqrt{c_1-a}+\sqrt{3M_1})(\sqrt{c_1 - a}) + f(c_1). \]

	Now we can show $a>0$ as in \cite{D}.
	Let $\beta(r) = -\arctan f'(r)$.
	Then \[
	\begin{aligned}
	\dfrac{\rm d}{{\rm d}r}(\log\sin \beta(r)) &= r-\dfrac{n-1}{r}-\dfrac{f(r)}{f'(r)}+\dfrac{\lambda\sqrt{1+f'(r)^2}}{f'(r)}\\
	&\le r-\dfrac{n-1}{r}-\dfrac{f(r)}{f'(r)} + \dfrac{\lambda(1-f'(r))}{f'(r)}\\
	&\le r-\dfrac{n-1}{r}-\lambda - \dfrac{A-\lambda}{f'(c_1)}
	\end{aligned}\]
	for $r \in (a,c_1)$, where $A=\max\left\lbrace \lambda, 2(\phi(c_1)\sqrt{c_1-a}+\sqrt{3M_1})(\sqrt{c_1 - a}) + f(c_1) \right\rbrace$.
	Integrating the inequality from $r$ to $c_1$: \[ \log\left( \dfrac{\sin\beta(c_1)}{\sin\beta(r)}\right)  \le \dfrac{1}{2}c_1^2+(n-1)\log\left( \dfrac{r}{c_1}\right)-\left( \lambda+\dfrac{A-\lambda}{f'(c_1)} \right)c_1.  \]
	Thus\[ \left(\dfrac{r}{c_1} \right)^{n-1} \ge \sin\beta(c_1)\,\exp\left\lbrace -\dfrac{1}{2}c_1^2 + \left( \lambda+\dfrac{A-\lambda}{f'(c_1)} \right)c_1  \right\rbrace >0  \] for $r$ in $(a,c_1)$.
	In particular, $a>0$.
	This completes the proof of the first part of the proposition.

	Assume $f' > 0$ and $f''< 0$ on $(a, b)$.
	Then the proof of $a < C_{-\lambda}$ is also similar to that of proposition \ref{prop:24-7-14}.

	We first show that $\lim_{r \to a^+}f'(r) = \infty$, which we will consider in two cases.
	If $\lim_{r \to a^+}f(r)$ is negative infinity, then $\lim_{r \to a^+}f'(r) = \infty$ follows from Lagrange Mean Value Theorem.
	If $\lim_{r \to a^+}f(r)$ is finite, then $a>0$ as in the first part of this proposition.
	Therefore, $\lim_{r \to a^+}f'(r) = \infty$ since $f: (a, b) \to \mathbb{R}$ is left maximally extended.

	Choose $\tilde{\lambda}\in(\lambda, 0)$ such that $C_{-\tilde{\lambda}}= \dfrac{\max\left\lbrace a,\sqrt{n-1}\right\rbrace  +C_{-\lambda} }{2}$.
	Then \[ r-\dfrac{n-1}{r} + \tilde{\lambda} < 0 \] for $0<r<C_{-\tilde{\lambda}}$.
	Since $\lim_{r \to a^+}f'(r) = \infty$, there is a sufficiently small $\varepsilon_1>0$ such that \[ \dfrac{f'}{\sqrt{1+(f')^2}}> \dfrac{\tilde{\lambda}}{\lambda}\] when $r \in (a,a+\varepsilon_1)$.
	Now, choose $c_2 \in (a, \min\left\lbrace b, C_{-\tilde{\lambda}},\, a+\varepsilon_1  \right\rbrace )$.
	Then, for $r\le c_2$, we have $r-\dfrac{n-1}{r}+\tilde{\lambda} \le -\dfrac{1}{M_2}$ for some $M_2>0$.
	Let $\varphi =  f' $.
	Then $\varphi>0$ and $\varphi'<0$ for $r \in (a,b)$.
	The rest of the argument is similar to that of the first part of this proposition.
\end{proof}

Together with proposition \ref{231206lem3.5}, this proposition gives
\begin{cor}\label{240113cor3.3}
	Let $\lambda<0$ and $f: (a, b) \to \mathbb{R}$ be a maximally extended solution to \eqref{231212eq2.9}.  Then $a < C_{-\lambda}$.
\end{cor}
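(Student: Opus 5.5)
The plan is to split according to the sign structure of $f$ near the left endpoint $a$, using Definition \ref{def3.1} and Corollary \ref{0528cor3.1}. Let $f:(a,b)\to\R$ be a maximally extended solution to \eqref{231212eq2.9} with $\lambda<0$. If $f$ is constant, then $(a,b)=(0,\infty)$ and $a=0<C_{-\lambda}$, so there is nothing to prove. Otherwise, Corollary \ref{0528cor3.1} says that zeros of $f'$ and zeros of $f''$ are isolated and never coincide, and that each of $f'$, $f''$ vanishes at most once. Hence there is an interval $(a,a+\eta)$ on which both $f'$ and $f''$ have constant nonzero sign. This gives four cases.

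\emph{Case $f'f''>0$ near $a$.} Restrict $f$ to $(a,a+\eta)$; it is still left maximally extended. Proposition \ref{231206lem3.5} then gives $a=0$, and $0<C_{-\lambda}$ because $C_{-\lambda}>0$ (indeed $\lambda^2+4(n-1)>\lambda^2$).

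\emph{Case $f'<0$, $f''>0$ near $a$.} Proposition \ref{231204lem3.3} gives $a<C_\lambda$. Since $\lambda<0$, we have $C_\lambda=\frac{\lambda+\sqrt{\lambda^2+4(n-1)}}{2}<\frac{-\lambda+\sqrt{\lambda^2+4(n-1)}}{2}=C_{-\lambda}$, so $a<C_{-\lambda}$.

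\emph{Case $f'>0$, $f''<0$ near $a$.} Proposition \ref{231204lem3.3} gives $a<C_{-\lambda}$ directly.

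The only point needing care is the restriction step: Propositions \ref{231206lem3.5} and \ref{231204lem3.3} assume the sign condition on the whole interval. Restricting to $(a,a+\eta)$ preserves left maximality, since the left endpoint is unchanged. So the propositions apply to the restricted solution, and their conclusions concern only $a$. I expect no real obstacle beyond checking this and the elementary inequality $C_\lambda<C_{-\lambda}$ for $\lambda<0$.
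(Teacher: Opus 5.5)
Your proof is correct and follows the paper's route: the paper obtains this corollary by combining Proposition \ref{231206lem3.5} (the $f'f''>0$ case, giving $a=0$) with Proposition \ref{231204lem3.3} (the $f'f''<0$ cases, giving $a<C_\lambda<C_{-\lambda}$ or $a<C_{-\lambda}$). Your separate treatment of the constant solution $f=\lambda$, and your restriction to an interval $(a,a+\eta)$ on which the signs of $f'$ and $f''$ are fixed, spell out steps the paper leaves implicit.
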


If $f'(r)f''(r)$ is negative for $r$ close to the right endpoint of the maximal existence intervals, we have the following proposition.
The proof is trivial so we omit it.
\begin{prop}\label{240112lem3.3}
	Let $f: (a, b) \to \mathbb{R}$ be a right maximally extended solution to \eqref{0526eq2.8} or \eqref{231212eq2.9}. If $f'f'' <0$, then $b = \infty$.
\end{prop}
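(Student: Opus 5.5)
Suppose $f'f''<0$ on $(a,b)$ and, for contradiction, $b<\infty$. The plan is to show that $f$ and $f'$ have finite limits at $b$ and that the equation stays regular there. Then the local existence theorem for \eqref{0526eq2.8} or \eqref{231212eq2.9}, which is nonsingular at $r=b>0$, extends $f$ past $b$. This contradicts right maximality.

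By symmetry we may treat the case $f'>0$, $f''<0$; the case $f'<0$, $f''>0$ is identical after reflecting signs. By Lemma \ref{0527lem3.1} (or directly by hypothesis) these sign conditions persist on all of $[c,b)$ for any $c\in(a,b)$.
\begin{itemize}
\item Since $f''<0$, $f'$ is decreasing on $[c,b)$. Since also $f'>0$, we get $0<f'(r)\le f'(c)$, so $\lim_{r\to b^-}f'(r)$ exists and is finite and nonnegative.
\item Integrating the bounded $f'$ shows that $f$ is increasing and bounded on $[c,b)$, so $\lim_{r\to b^-}f(r)$ also exists and is finite.
\item The right-hand side of the equation, $(r-\frac{n-1}{r})f' - f \pm\lambda\sqrt{1+(f')^2}$, is then bounded on $[c,b)$, because $b<\infty$ and $r\ge c>a\ge0$ keeps $1/r$ bounded. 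Hence $f''$ is bounded too.
\end{itemize}

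So $(f,f')$ has a finite limit $(f_b,f'_b)$ as $r\to b^-$. Moreover $b>0$, so $1/r$ causes no trouble at $b$. The system $(f,f')'=\big(f',\,(1+(f')^2)(\cdots)\big)$ is smooth near $(b,f_b,f'_b)$. Solving it with initial data $(f_b,f'_b)$ at $r=b$ gives a solution on $(b-\eta,b+\eta)$. By uniqueness this solution agrees with $f$ on $(b-\eta,b)$, so it extends $f$ beyond $b$, which is the desired contradiction. Therefore $b=\infty$.

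The only point needing any care is the bookkeeping in the boundedness step. One must check that the monotonicity of $f'$ in the right direction gives boundedness of $f'$ from both sides, and that both signs in $\pm\lambda$ are handled. This is exactly why the sign condition $f'f''<0$, and not $f'f''>0$, is what prevents blow-up. There is no real obstacle, consistent with the authors' remark that the proof is trivial.
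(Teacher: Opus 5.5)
Your argument is correct, and it is exactly the routine continuation argument the paper omits as trivial: $f'f''<0$ makes $|f'|$ decrease toward $b$, so for finite $b$ the triple $(r,f,f')$ stays in a compact subset of $\{r>0\}\times\mathbb{R}^2$, where the equation is regular, and the solution extends past $b$. One small correction: Lemma \ref{0527lem3.1} propagates the sign $f'f''<0$ to the \emph{left} (onto $(a,c]$), not onto $[c,b)$, so the persistence on $[c,b)$ should be taken directly from the hypothesis; by that lemma, the hypothesis holding near $b$ already forces it on all of $(a,b)$.
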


The following proposition can be interpreted as a complement to proposition \ref{prop:24-7-14}.
	\begin{prop}\label{231211lem3.3}
		Let $\lambda<0$ and $f: [a, b) \to \mathbb{R}$ be a right maximally extended solution to \eqref{0526eq2.8}. If $f' > 0$ and $f''> 0$ on $[a, b)$, then $C_{\lambda}<b < \infty$ and $\lim_{r \to b^-}f(r)<\infty$.
	\end{prop}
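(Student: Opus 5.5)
We must show three things: $b>C_\lambda$, $b<\infty$, and that $f$ stays bounded as $r\to b^-$. The first is exactly proposition \ref{prop:24-7-14}, since $f'>0$ and $f''>0$ on $(a,b)$. The second and third need new arguments.

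\textbf{Finiteness of $b$.} Suppose $b=\infty$. Since $f'>0$ and $f''>0$, $f'$ increases to some limit $L\in(0,\infty]$ and $f\to+\infty$. If $L<\infty$, then $f(r)\sim Lr$ for large $r$. The right side of \eqref{0526eq2.8} is
\[
\Big(r-\frac{n-1}{r}\Big)f'-f-\lambda\sqrt{1+(f')^2}.
\]
Write $rf'-f=\int_a^r s f''(s)\,ds + af'(a)-f(a)$, which is increasing. If $L<\infty$ we can force $f''\to 0$, and I expect this to give a contradiction with $f''>0$ through the sign of the right side; this needs care. The cleaner route is to show that the right side grows at least linearly in $r$. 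Indeed $rf'-f$ is increasing and eventually at least some $\eta>0$ (it becomes positive once $\int s f''\,ds$ dominates), while $-\lambda\sqrt{1+(f')^2}\ge -\lambda f'>0$ because $\lambda<0$. Hence
\[
\frac{f''}{1+(f')^2}\ge c_0 f'
\]
for large $r$, with $c_0=-\lambda/2>0$ after absorbing the term $-\frac{n-1}{r}f'$, which is $o(f')$. The comparison ODE $\varphi'\ge c_0\varphi(1+\varphi^2)\ge c_0\varphi^3$ for $\varphi=f'$ gives $(\varphi^{-2})'\le -2c_0$, so $\varphi^{-2}$ reaches zero in finite $r$. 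This contradicts $b=\infty$. The same inequality shows $f'\to\infty$ as $r\to b^-$, which is consistent with maximality.

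\textbf{Boundedness of $f$.} This is the main step, and I would copy the barrier argument of proposition \ref{231204lem3.3}. Set $\varphi=f'>0$, so $\varphi'>0$. By \eqref{0526eq2.9}, on an interval $(c_1,b)$ with $c_1>C_\lambda$,
\[
\frac{\varphi'}{1+\varphi^2}\ge \Big(r-\frac{n-1}{r}-\lambda\frac{\varphi}{\sqrt{1+\varphi^2}}\Big)\varphi'\ge \frac{1}{M}\varphi'\varphi^2 .
\]
For the last inequality, drop the nonnegative terms $\frac{2\varphi\varphi'^2}{(1+\varphi^2)^2}$ and $\frac{n-1}{r^2}\varphi$. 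Then use $r-\frac{n-1}{r}-\lambda\ge \frac1M$ for $r\ge c_1>C_\lambda$, together with $\varphi/\sqrt{1+\varphi^2}\le 1$ and $-\lambda>0$. Since $\varphi\to\infty$, near $b$ we get
\[
\varphi''\ge \frac{1}{2M}\varphi'\varphi^2 .
\]
We then compare $\varphi$ with the explicit barrier $\varphi_\varepsilon=K/\sqrt{(b-\varepsilon)-r}$, which satisfies $\varphi_\varepsilon''\le \frac1{2M}\varphi_\varepsilon'\varphi_\varepsilon^2$ once $K$ is large, with $K$ chosen larger than $\varphi(c_1)\sqrt{b-c_1}$. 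At an interior positive maximum of $\varphi-\varphi_\varepsilon$ we would have $\varphi=\varphi_\varepsilon$ and $\varphi'=\varphi_\varepsilon'$... not exactly, so we should argue as in \ref{231204lem3.3}. At a positive maximum of $\varphi-\varphi_\varepsilon$ we have $\varphi'=\varphi_\varepsilon'>0$ and $\varphi>\varphi_\varepsilon$, hence
\[
(\varphi-\varphi_\varepsilon)''\ge \tfrac{1}{2M}\varphi'(\varphi^2-\varphi_\varepsilon^2)>0,
\]
a contradiction. Letting $\varepsilon\to0$ gives $f'(r)\le K(b-r)^{-1/2}$, which is integrable up to $b$. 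Hence $\lim_{r\to b^-}f(r)<\infty$.

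The obstacle is making the finiteness argument precise, in particular securing $rf'-f\ge0$ eventually. If that fails, I would instead use $-\lambda\sqrt{1+(f')^2}$ alone against $-f+rf'$ via convexity, since $f(r)\le f(a)+f'(r)(r-a)$ gives $rf'-f\ge af'-f(a)$, which is bounded below. Then the right side is at least $(-\lambda-\frac{n-1}{r}+a)f' - f(a)$, which is $\ge c_0f'$ once $f'$ is large; if $f'$ stayed bounded, the right side would still be bounded below by a positive constant for large $r$, forcing $f'\to\infty$ anyway.
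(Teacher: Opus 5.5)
Your argument for $b>C_\lambda$ and your barrier argument for boundedness follow the paper. The proof of $b<\infty$, however, has a genuine gap at exactly the point you flag. Every version of your inequality $\frac{f''}{1+(f')^2}\ge c_0f'$ needs $f'\to\infty$, i.e.\ $rf'-f$ eventually positive, or $f'$ eventually large enough to absorb $f(a)$. That is never proved. The claim that $rf'-f$ ``becomes positive once $\int sf''\,ds$ dominates'' is unsupported: if $f'$ is bounded, nothing you have rules out $\int_a^\infty sf''(s)\,ds<\infty$. In the fallback, the lower bound $(a-\lambda-\frac{n-1}{r})f'-f(a)$ is negative when $f(a)>0$ and $f'$ stays small, so it does not yield ``a positive constant''; a priori the right side of \eqref{0526eq2.8} could tend to $0$.

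The missing step is one differentiation. In \eqref{0526eq2.9}, with $\lambda<0$ and $f',f''>0$, every term on the right is nonnegative for $r\ge\sqrt{n-1}$, and $\frac{n-1}{r^2}f'>0$. So $f'''>0$ there, hence $f''\ge f''(r_0)>0$ for $r\ge r_0=\max\{a,\sqrt{n-1}\}$, and $f'\to\infty$. Your own idea for the case $f'\uparrow L<\infty$ can also be completed. The right side of \eqref{0526eq2.8} is positive and integrable on $[a,\infty)$, with integral $\arctan L-\arctan f'(a)$. It has a limit, since $rf'-f$ is monotone and $f'\to L$, so that limit is $0$. This forces $rf'-f<\lambda\sqrt{1+L^2}$ everywhere, and then the right side is $<\lambda\sqrt{1+L^2}-\lambda\sqrt{1+(f')^2}<0$, a contradiction.

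Once $f'\to\infty$ is known, your fallback bound gives $f''\ge-\frac{\lambda}{2}f'(1+(f')^2)$ for large $r$, and $(f')^{-2}$ must vanish at finite $r$, as you wrote. Repaired this way, your route is genuinely different from the paper's and more economical. The paper uses \eqref{0526eq2.9} only to get $f'''>f''$, hence exponential growth of $f$. It then needs the integral identity of lemma \ref{lem3.2}, which rests on lemma \ref{lem1}, to produce the competing linear bound $f(r)<\frac{1}{\sigma}(r+M)$. You get the contradiction from \eqref{0526eq2.8} alone.

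In the boundedness step, your stated justification for the coefficient bound is backwards. Since $-\lambda>0$ and $\varphi/\sqrt{1+\varphi^2}<1$, the coefficient $r-\frac{n-1}{r}-\lambda\frac{\varphi}{\sqrt{1+\varphi^2}}$ is strictly smaller than $r-\frac{n-1}{r}-\lambda$. So $r-\frac{n-1}{r}-\lambda\ge\frac1M$ gives no lower bound. Nor can you simply drop the $\lambda$-term, because $C_\lambda<\sqrt{n-1}$ when $\lambda<0$ and $b$ may lie in $(C_\lambda,\sqrt{n-1}]$.

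The correct reason is the one you invoke next: $\varphi\to\infty$ as $r\to b^-$. This comes from right-maximality with $b<\infty$ (a bounded $f'$ on a bounded interval would let the solution be continued), not from ``the same inequality''. Hence $\varphi/\sqrt{1+\varphi^2}\to1$, and the coefficient exceeds $\frac{1}{2M}$ near $b$; this is precisely what the paper's auxiliary $\tilde\lambda$ achieves. The display should also read $\varphi''\ge(r-\frac{n-1}{r}-\lambda\frac{\varphi}{\sqrt{1+\varphi^2}})\varphi'(1+\varphi^2)$ before passing to $\varphi'\varphi^2$. With these corrections, your barrier $K/\sqrt{(b-\varepsilon)-r}$ with $K^2\ge 3M$ and $K>\varphi(c_1)\sqrt{b-c_1}$ works as in the paper.
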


The proof of proposition \ref{231211lem3.3} is quite involved and requires careful asymptotic estimates to rule out the case where the solution extends to infinity. To this end, we first prepare a lemma that establishes an upper bound for the maximal existence interval of a related initial value problem.
\begin{lem}\label{lem1}
	If $\lambda<0$, $x_0 \in (-\lambda,\infty)$ and $(x_0, x_{\infty})$ is a maximally extended solution to the initial value problem \begin{equation}\label{eq3.1}
	\left\lbrace \begin{aligned}
	& u'' = \left[ xu'-u+\dfrac{n-1}{u}+\lambda\sqrt{1+(u')^2}\right] (1+(u')^2),\\
	& u(x_0) = u_0,\\
	& u'(x_0) \ge \sigma,
	\end{aligned}\right.
	\end{equation}
	where $u_0>0$, $\sigma>0$ and they satisfy $u_0 = \sigma x_0 + \lambda\sqrt{1+\sigma^2}$. Then $x_{\infty}<\infty$. Furthermore, if $u_0 \ge \sqrt{n-1}$, then $x_\infty < -\lambda + \dfrac{n}{n-1}(x_0+\lambda)$.
\end{lem}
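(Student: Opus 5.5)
The plan has two parts. First I show that the solution stays strictly convex with a controlled "tangent-line" quantity. Then I turn that into a finite blow-up estimate for $u'$.

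\emph{Step 1: a monotone quantity.} I would introduce
\[
w(x)=x\,u'(x)-u(x)+\lambda\sqrt{1+u'(x)^2},
\]
so that \eqref{eq3.1} reads $u''=\bigl(w+\frac{n-1}{u}\bigr)(1+(u')^2)$.

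At $x_0$, set $g(p)=x_0p+\lambda\sqrt{1+p^2}$. Since $\lambda<0$ and $|p/\sqrt{1+p^2}|<1$, we get $g'(p)=x_0+\lambda p/\sqrt{1+p^2}>x_0+\lambda>0$. Together with $u'(x_0)\ge\sigma$, this gives $g(u'(x_0))\ge g(\sigma)=u_0$, that is, $w(x_0)\ge 0$.

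Differentiating,
\[
w'=u''\Bigl(x+\lambda\frac{u'}{\sqrt{1+(u')^2}}\Bigr)\ge u''(x+\lambda).
\]
A continuity argument then shows that on $[x_0,x_\infty)$ we have $u''>0$, $w'>0$, $w>0$, $u'\ge\sigma$ and $u\ge u_0$. Indeed, as long as $w\ge0$ and $u>0$, we have $u''>0$, hence $w'>0$ because $x>-\lambda$. In particular $u$ stays positive and increasing, so the right-hand side of \eqref{eq3.1} remains regular until $u'$ blows up.

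\emph{Step 2: finiteness of $x_\infty$.} Fix any $x_1\in(x_0,x_\infty)$ and put $w_1=w(x_1)>0$. For $x\ge x_1$, with $\varphi=\arctan u'$, we have $\varphi'=u''/(1+(u')^2)\ge w_1$. Since $\varphi<\pi/2$, this forces $x_\infty\le x_1+\pi/(2w_1)<\infty$.

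\emph{Step 3: the quantitative bound when $u_0\ge\sqrt{n-1}$.} Write $t=x+\lambda>0$. From $w\ge0$ and $\lambda\sqrt{1+p^2}\le\lambda p$ (valid for $\lambda<0$) we obtain $u\le t\,u'$. Thus the quantity $q=u/u'$ satisfies $0<q\le t$, and in particular $q(x_0)\le x_0+\lambda$. A direct computation gives
\[
q'=1-\frac{u\,u''}{(u')^2}=1-\bigl(u\,w+n-1\bigr)\Bigl(1+\frac1{(u')^2}\Bigr).
\]
The aim is to show $q'\le-(n-1)$, possibly only after a short initial stretch or on average, so that the positive quantity $q$ must reach $0$ within $x$-length $(x_0+\lambda)/(n-1)$. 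Since $q\to0$ exactly when $u'\to\infty$ (with $u$ bounded below), this yields
\[
x_\infty<x_0+\frac{x_0+\lambda}{n-1}=-\lambda+\frac{n}{n-1}(x_0+\lambda).
\]

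\emph{Main obstacle.} The difficulty is the inequality $(uw+n-1)(1+(u')^{-2})\ge n$ needed in Step 3. The term $(n-1)/(u')^2$ must compensate for the missing unit while $w$ is still small near $x_0$. This is where the hypothesis $u_0\ge\sqrt{n-1}$ should enter, for instance through $(n-1)/u\le u$ combined with the relation $u_0=\sigma x_0+\lambda\sqrt{1+\sigma^2}$. My first attempt would be to bound $u\,w$ from below using $w'\ge u''t$ and $u\ge u_0$, integrated from $x_0$. If that fails near $x_0$, I would instead estimate the two phases separately: a phase where $u'$ is bounded, controlled by $\varphi'\ge(n-1)/u$, and a phase where $uw$ dominates.
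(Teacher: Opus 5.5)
\textbf{There is a genuine gap, in Step 3.} Your Steps 1 and 2 are correct. Step 2 in fact proves $x_\infty<\infty$ directly, without the paper's ``without loss of generality $u(x_0)\ge\sqrt{n-1}$'' reduction.

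Step 3 carries the whole quantitative assertion, and it is not proved. The inequality $(uw+n-1)(1+(u')^{-2})\ge n$ that you need for $q'\le-(n-1)$ is false in general, and the hypothesis $u_0\ge\sqrt{n-1}$ does not repair it. If $u'(x_0)=\sigma$, then $w(x_0)=0$, and the inequality at $x_0$ reduces to $\sigma^2\le n-1$. The hypotheses do not force this. For example, take $n=2$, $\lambda=-1$, $x_0=10$, $\sigma=5$ and $u_0=50-\sqrt{26}\ge 1$. Then $q'(x_0)=-1/25$, not $\le -1$. Your fallbacks (``on average'', ``two phases'') are not arguments. So as written, the bound $x_\infty<-\lambda+\frac{n}{n-1}(x_0+\lambda)$ is unproved. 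You identified the right ingredient, $(n-1)/u\le u$, but applied it to the wrong quantity.

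\textbf{The missing idea.} Keep $\varphi=\arctan u'$ as in Step 2, but get a lower bound on $\varphi'=\Phi:=w+\frac{n-1}{u}$ that holds from $x_0$ onward. This is the paper's argument, and it uses two facts.

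First, $\lambda\sqrt{1+\sigma^2}<\lambda\sigma$ gives $u_0<\sigma(x_0+\lambda)$. Hence
\[
\Phi(x_0)\ge\frac{n-1}{u_0}>c:=\frac{n-1}{\sigma(x_0+\lambda)}.
\]

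Second, $\Phi$ satisfies
\[
\Phi'=\Bigl(x+\frac{\lambda u'}{\sqrt{1+(u')^2}}\Bigr)\Phi\,(1+(u')^2)-\frac{(n-1)u'}{u^2}.
\]
This is where $u\ge u_0\ge\sqrt{n-1}$ enters: it gives $\frac{(n-1)u'}{u^2}\le u'$. So as long as $\Phi>c$, using $u'\ge\sigma$ one gets
\[
\Phi'>(x_0+\lambda)\,c\,(1+\sigma u')-u'=\frac{n-1}{\sigma}+(n-2)u'>0 .
\]
A continuity argument then gives $\Phi>c$ on all of $[x_0,x_\infty)$.

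Integrating $\varphi'>c$ from $\varphi(x_0)\ge\arctan\sigma$ up to $\pi/2$ yields
\[
x_\infty-x_0\le\frac{\sigma(x_0+\lambda)}{n-1}\Bigl(\frac{\pi}{2}-\arctan\sigma\Bigr)<\frac{x_0+\lambda}{n-1},
\]
because $\sigma\arctan(1/\sigma)<1$. This gives the stated bound, and the quantity $q=u/u'$ is not needed.
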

\begin{proof}
	Defining $\Psi(x) := xu'(x)  + \lambda\sqrt{1+(u')^2} - u(x)$, we note that the initial conditions are equivalent to $u(x_0)>0$ and $\Psi(x_0)\ge 0$ since $t\,x_0 + \lambda\sqrt{1+t^2}$ is strictly increasing with respect to $t$.
	Also, $\Psi(x_0) \ge 0$ implies that $u'(x_0)>0$, $u''(x_0)>0$, and $x_0 + \dfrac{\lambda u'(x_0)}{\sqrt{1+u'(x_0)^2}}>x_0+\lambda >0$.
	Since \begin{equation}
	\Psi' = \left( x + \dfrac{\lambda u'}{\sqrt{1+(u')^2}} \right)\left( \Psi + \dfrac{n-1}{u}  \right)(1+(u')^2),
	\end{equation}
	we see that $\Psi \ge 0$ and $\Psi' > 0$ in $[x_0, x_{\infty})$.
	Thus $u'>0$ and $u''>0$ in $[x_0, x_{\infty})$. Consequently, without loss of generality, we may assume $u(x_0) \ge \sqrt{n-1}$.

	Define the quantity \[ \Phi = \Psi +\dfrac{n-1}{u}. \]
	Then we have $\Phi(x_0)\ge \dfrac{n-1}{u_0}>\dfrac{n-1}{\sigma(x_0+\lambda)}$ since $u_0 = \sigma x_0 + \lambda\sqrt{1+\sigma^2}$.
	We claim that in fact $\Phi(x)>\dfrac{n-1}{\sigma(x_0+\lambda)}$ for all $x\ge x_0$.
	This is because, assuming this holds up to $x$, we have for $x\ge x_0$, \[ \begin{aligned}
	\Phi' &= \left( x + \dfrac{\lambda u'}{\sqrt{1+(u')^2}} \right)\Phi\,(1+(u')^2) - \dfrac{n-1}{u^2}u'\\
	&> (x_0 + \lambda)\dfrac{n-1}{\sigma(x_0+\lambda)}(1+\sigma u') - u'\\
	&\ge \dfrac{n-1}{\sigma} >0.
	\end{aligned}  \]
	We thus obtain the inequality $u'' >\dfrac{n-1}{\sigma(x_0 + \lambda)}(1+(u')^2)$ for $x\ge x_0$.
	Integrating this inequality gives \[ u'(x) \ge \tan \left[ (n-1)\dfrac{x-x_0}{\sigma(x_0+\lambda)} + \arctan \sigma\right] , \]
	which finally leads to $x_\infty \le \dfrac{\sigma(x_0+\lambda)}{n-1}\left( \dfrac{\pi}{2} - \arctan \sigma\right) +x_0 < \dfrac{x_0+\lambda}{n-1} +x_0 = -\lambda + \dfrac{n}{n-1}(x_0+\lambda)$.
\end{proof}
To further control the growth of the solution $u(x)$ as $x \to \infty$, we now derive a crucial integral identity by viewing the nonlinear ODE as a linear inhomogeneous equation.
\begin{lem}[Integral identity]\label{lem3.2}
	Let $\lambda<0$, $d> -\lambda$.
	Let $u:[d, \infty) \to \R^+$  be a solution to  \eqref{0528eq2.7} such that $u'> 0$ and $u'' \ne 0$. Then, for some $\sigma = \sigma(u) \ge 0$, \(u\) satisfies the identity \begin{equation}\label{eq3.3}
	\begin{aligned}
	u(x) = \sigma x
	+x\int_{x}^{\infty}\dfrac{1}{t^2}\left\lbrace \int_{t}^{\infty}sA(s)B(s)e^{  -\int_{t}^{s}zB(z){\rm d}z} {\rm d}s\right\rbrace {\rm d}t,
	\end{aligned}
	\end{equation} when $x \in [d,\infty)$.
	Here $A(s) = \dfrac{n-1}{u(s)} + \lambda\sqrt{1+(u'(s))^2}$, $B(s) =1+(u'(s))^2$.
	In particular, there is an $M>0$ such that $|u(x) -\sigma x|<M$ for $x \in [d,\infty)$.
\end{lem}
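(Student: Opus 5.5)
Since $u$ is defined on all of $[d,\infty)$, I will treat \eqref{0528eq2.7} as a linear inhomogeneous first-order equation for $w:=xu'-u$. Write $v=u'$, $B=1+v^2$, $A=\frac{n-1}{u}+\lambda\sqrt{1+v^2}$, so that \eqref{0528eq2.7} reads $v'=B(xv-u+A)$. Since $w'=xu''$, this gives
\[
w'(x)=xB(x)\,w(x)+xA(x)B(x).
\]
Multiplying by the integrating factor $e^{-\int_d^x zB(z)\,{\rm d}z}$ will give an explicit formula for $w$. The aim is to show that the homogeneous part must vanish, which is forced by the global existence of $u$ on $[d,\infty)$. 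Because $B\ge1$, the factor $\int_d^x zB$ grows at least like $x^2/2$.

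First I will pin down the signs and limits. Since $u'>0$ and $u''\neq0$, $u''$ has a constant sign, so $u'$ is monotone and has a limit $\sigma\ge0$ (possibly $+\infty$ a priori). If $u''>0$ eventually, I expect an argument like the one in Lemma \ref{lem1} to force finite-time blow-up. Indeed, $\Psi=w+\lambda\sqrt{1+v^2}$ satisfies a Riccati-type inequality once $x>-\lambda$, which gives a contradiction with $x_\infty=\infty$ when $\Psi\ge0$ somewhere. So I will aim to show $u''<0$ on $[d,\infty)$. Then $u'$ decreases to some $\sigma\in[0,\infty)$, and $w'=xu''<0$.

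Next comes the key step: proving
\[
w(t)=-\int_t^\infty sA(s)B(s)\,e^{-\int_t^s zB(z)\,{\rm d}z}\,{\rm d}s.
\]
Variation of constants gives $w(t)e^{-\int_d^t zB}=w(d)+\int_d^t sAB\,e^{-\int_d^s zB}\,{\rm d}s$. Since $u'$ is bounded by $u'(d)$ and $u\ge u(d)>0$, both $A$ and $B$ are bounded. The integral therefore converges as $t\to\infty$, because the Gaussian-type decay $e^{-\int_d^s zB}\le e^{-(s^2-d^2)/2}$ beats $sAB$. If the resulting constant $w(d)+\int_d^\infty sAB\,e^{-\int_d^s zB}$ were nonzero, $w$ would grow like $e^{x^2/2}$. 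But $|w|\le x u'(d)+u(x)\le Cx$, a contradiction. Hence the constant is zero, and the displayed formula follows.

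Finally, I integrate $(u/x)'=w/x^2$ from $x$ to $\infty$. Here $u(x)/x\to\lim u'=\sigma$ by l'Hôpital, so
\[
\frac{u(x)}{x}=\sigma-\int_x^\infty \frac{w(t)}{t^2}\,{\rm d}t,
\]
which is \eqref{eq3.3} after substituting the formula for $w$. For the bound, note that $|A|,B\le K$ and $\int_t^\infty sKe^{-(s^2-t^2)/2}\,{\rm d}s=K$, so $|w(t)|\le K^2$. Then $|u-\sigma x|\le x\int_x^\infty K^2t^{-2}\,{\rm d}t=K^2$.

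I expect the main obstacle to be the sign step: excluding $u''>0$, or at least obtaining boundedness of $u'$, so that $A$ and $B$ are bounded. If the comparison with Lemma \ref{lem1} is awkward, I would instead argue that if $u'\to\infty$ then $u''\ge cB\cdot(\text{positive})$ eventually and $\arctan u'$ reaches $\pi/2$ in finite $x$, contradicting existence on $[d,\infty)$.
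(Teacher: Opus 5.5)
Your primary plan for the sign step cannot work: $u''<0$ on $[d,\infty)$ does not follow from the hypotheses. Lemma~\ref{lem1} produces finite-time blow-up only once $\Psi\ge0$ at some point. But $u''>0$ only says $\Psi>-\frac{n-1}{u}$, and convex solutions with $-\frac{n-1}{u}<\Psi<0$ on all of $[d,\infty)$ do exist. To build one, fix $u(d)>C_\lambda$ and shoot in $u'(d)$ between the value where $\Psi+\frac{n-1}{u}$ vanishes at $d$ and the value where $\Psi$ vanishes at $d$. Both exits from that regime are transversal, since $(\Psi+\frac{n-1}{u})'=-\frac{(n-1)u'}{u^2}$ at a zero of $u''$, and $\Psi'=u''\bigl(x+\frac{\lambda u'}{\sqrt{1+(u')^2}}\bigr)>0$ at a zero of $\Psi$. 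So some initial slope gives a solution that stays in the regime forever. It has $u'>0$, $u''>0$, bounded slope, and is asymptotic from above to a line $u=\sigma x+\lambda\sqrt{1+\sigma^2}$. Hence the bound $u'\le u'(d)$ has to go. What you really need is $\sup u'<\infty$, and your fallback does not yet give it: you never explain why $u''/B$ is bounded below by a positive constant once $u'\to\infty$.

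The missing idea is to use Lemma~\ref{lem1} for a growth bound rather than a sign. Since $u>0$ exists on all of $[d,\infty)$ with $d>-\lambda$, that lemma gives $\Psi<0$ at every point of $[d,\infty)$. With $\lambda<0$, $u'>0$ and $\sqrt{1+(u')^2}<1+u'$, this becomes $(x+\lambda)u'<u-\lambda$. So $\frac{u-\lambda}{x+\lambda}$ is decreasing, and
\[
0<u'(x)<\frac{u(x)-\lambda}{x+\lambda}\le\frac{u(d)-\lambda}{d+\lambda},\qquad x\ge d,
\]
which is essentially the paper's step. Your fallback can also be repaired directly. In the convex case, $u(x)\le u(d)+(x-d)u'(x)$ gives $\Psi(x)\ge(d+\lambda)u'(x)-u(d)+\lambda$, so $u'\to\infty$ would force $\Psi>0$ somewhere.

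With this bound replacing $u'\le u'(d)$, the rest of your argument is correct, and it is a genuinely different, somewhat cleaner route than the paper's. You treat $w=xu'-u$ as the solution of the first-order linear equation $w'=xBw+xAB$ and kill the homogeneous mode $e^{\int zB}$ using the linear growth of $w$. The paper instead applies variation of parameters to the second-order frozen-coefficient equation on $[d,a)$. It then has to pass to the limit along a sequence $a_k$ with $u''(a_k)\to0$ to control $u(a_k)-a_ku'(a_k)$.
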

\begin{proof}
	Suppose first that we are given a solution $u: [d,a) \to \mathbb{R}^+$ over an interval $[d,a)$.
	We can regard $u$ as a solution to an inhomogeneous linear equation determined by freezing the coefficients at $u$,
	\begin{equation}
	u'' -x(1+(\varphi')^2)u' + (1+(\varphi')^2)u = \left( \dfrac{n-1}{\varphi} + \lambda\sqrt{1+(\varphi')^2}\right) (1+(\varphi')^2),
	\end{equation}
	where we have set $u = \varphi$.
	We can solve the resulting linear equation with variable coefficients, for $x \in [d,a)$, by observing that a fundamental pair of solutions to the homogeneous linear equation is given by
	\begin{equation}
	u_1(x) = x\ \ \ \ {\rm and}\ \ \ \ u_2(x)= x\int_{x}^{a}\frac{\exp\left( -\int_{s}^{a}z(1+(\varphi')^2){\rm d}z\right)}{s^2}{\rm d }s.
	\end{equation}
	Then computing the Wronskian $W(s) = \exp\left( -\int_{s}^{a}z(1+(\varphi')^2){\rm d}z\right)$ and matching the initial conditions gives
	\begin{equation}\label{eq3.6}
	\begin{aligned}
	u(x) &= \dfrac{u(a)}{a}x +(u(a) - u'(a)a)x\int_{x}^{a}\frac{\exp\left( -\int_{s}^{a}z(1+(\varphi')^2){\rm d}z\right)}{s^2}{\rm d }s\\
	+x\int_{x}^{a}\dfrac{1}{t^2}&\left\lbrace \int_{t}^{a}s\left( \dfrac{n-1}{u(s)} + \lambda\sqrt{1+(u'(s))^2}\right) (1+(u'(s))^2) e^{  -\int_{t}^{s}z(1+(u'(z))^2){\rm d}z} {\rm d}s\right\rbrace {\rm d}t
	\end{aligned}
	\end{equation}
	To complete the proof of (\ref{eq3.3}), we will show that for some limit $\sigma \ge 0$,
	\begin{equation}
	\dfrac{u(a)}{a} \to \sigma, \ \ {\rm for}\ \ \ a \to \infty,
	\end{equation}
	\begin{equation}
	(u(a) - u'(a)a)x\int_{x}^{a}\frac{\exp\left( -\int_{s}^{a}z(1+(\varphi')^2){\rm d}z\right)}{s^2}{\rm d }s \to 0, \ \ {\rm for}\ \ \ a \to \infty.
	\end{equation}
	Recall that by lemma \ref{lem1}, for any solution $u: [d,\infty) \to \R^+$ the quantity $\Psi(x) = xu'(x) - u(x) + \lambda\sqrt{1+(u')^2} $ is pointwise negative.
	Using $u' > 0$ and $\sqrt{1+(u')^2} < 1 + u'$, we have that  the ratio $\frac{u(a)-\lambda}{a+\lambda}$ is monotonically decreasing in $a$, and hence converges to some limit $\sigma \ge 0$.
	Then $\lim_{a \to \infty}\frac{u(a)}{a} = \sigma$, which implies $\lim_{a \to \infty}u'(a) = \sigma$ by the fact that $u'' \ne 0$ and the Lagrange Mean Value Theorem.
	Also, there is a sequence $\left\lbrace a_k\right\rbrace $ increasing to $\infty$ such that $\lim_{k \to \infty}u''(a_k) =0$.
	Thus $u(a_k) - u'(a_k)a_k$ converges to a finite limit by (\ref{0528eq2.7}) as $k$ approaches infinity, so that this term is bounded, and since
	\[ \int_{x}^{a_k}\frac{\exp\left( -\int_{s}^{a_k}z(1+(\varphi')^2){\rm d}z\right)}{s^2}{\rm d }s \le \dfrac{e^{-\frac{1}{2}a_k^2}}{x^2}\int_{x}^{a_k}e^{\frac{1}{2}s^2}{\rm d}s \to 0 \ \ {\rm as} \ \,k \to \infty,\]
	we see that inserting the sequence $\left\lbrace a_k\right\rbrace \to \infty$  in (\ref{eq3.6}) leads to (\ref{eq3.3}).

	Since $u'>0$ and $\lim_{x \to \infty}u'(x)$ is finite, there is an $M>0$ such that $|A(x)|<M$ for all $x \in [d, \infty)$.
	Using (\ref{eq3.3}), we get \[
	|u(x) -\sigma x| \le Mx\int_{x}^{\infty}\dfrac{1}{t^2}\left\lbrace \int_{t}^{\infty}sB(s)e^{  -\int_{t}^{s}zB(z){\rm d}z} {\rm d}s\right\rbrace {\rm d}t  = M.
	 \]\end{proof}

 \noindent{\it Proof of proposition \ref{231211lem3.3}.}
    It has been proved in proposition \ref{prop:24-7-14} that $b>C_{\lambda}$.
	We first prove $b<\infty$.
	Suppose for the sake of contradiction that $b = \infty$.
	Then $\lim_{r \to \infty}f(r) = \infty$ by the convexity of $f$.
	On one hand, the graph of $f$ can be written as a graph of a function over the $x$-axis, denoted by $u(x)$ for $x \in [f(a), \infty)$.
	Then $u$ satisfies (\ref{0528eq2.7}), $u'>0$, $u''<0$  and $\lim_{x \to \infty}u(x) = \infty.$
	Using lemma \ref{lem3.2}, there are $\sigma>0$, $M>0$ such that $|u(x)-\sigma x|<M$.
	Hence we must have $f(r)< \frac{1}{\sigma}(r+M)$ for $r$ in $[a, \infty)$.
	On the other hand, through (\ref{0526eq2.9}), there exists sufficiently large $c_1>0$ such that $f'''(r)>f''(r)$ for $r \in [c_1, \infty)$.
	Integrating from $c_1$ to $r$, we get $f(r) \ge Ae^r +B\,r +C$, where $A>0$, $B$, $C$ are constants.
	This contradicts the fact that $f(r)< \frac{1}{\sigma}(r+M)$. Hence $b<\infty$.

	Using the Lagrange Mean Value Theorem and the condition that $f$ is right maximally extended, we know that $\lim_{r\to b^-}f'(r) = \infty$.
	Choose $\tilde{\lambda}$: $\lambda< \tilde{\lambda}<0$ such that $C_{\tilde{\lambda}}= \frac{C_\lambda + \min\left\lbrace \sqrt{n-1},b \right\rbrace }{2}$, then \[ r-\dfrac{n-1}{r} - \tilde{\lambda} > 0 \] for $r>C_{\tilde{\lambda}}$.
	Since $\lim_{r \to b^-}f'(r) = \infty$, there is a sufficiently small $\varepsilon_1>0$ such that \[ \dfrac{f'}{\sqrt{1+(f')^2}}> \dfrac{\tilde{\lambda}}{\lambda}\] when $r \in (b-\varepsilon_1,b)$.
	Now, choose $c_2 \in (\max\left\lbrace a, C_{\tilde{\lambda}},\, b-\varepsilon_1  \right\rbrace, b )$. Then, for $r\ge c_2$, we have $r-\dfrac{n-1}{r}-\tilde{\lambda} \ge \dfrac{1}{M}$ for some $M>0$.
	Let $\phi =  f' $. Then $\phi>0$ and $\phi'>0$ for $r \in (a,b)$.
	Using (\ref{0526eq2.9}), we have \[ \phi''\ge \dfrac{1}{M}\phi'\phi^2\] for $r \in [c_2,b)$.
	The rest of the argument is similar to that of proposition \ref{231204lem3.3}. $\hfill\square$

Before concluding this section, we establish a specific technical lemma regarding the existence of a critical point ($f'=0$) near the endpoint. While isolated here, this property is a crucial geometric tool. It will be utilized in section 4 (lemma \ref{231207lem4.2}) to determine the shape of the profile curves, and more importantly, it will serve as a key ingredient in the final proof of theorem \ref{0902thm1.1} to rule out certain degenerate configurations (specifically, to show that the curve $\gamma_{\delta_s}$ does not belong to \(\mathcal{C}_1(3)\)).
\begin{lem}\label{231205lem3.4}
	Let $\lambda<0$ and $f: [b-\sqrt{2}, b) \to \mathbb{R}$ be a solution to \eqref{231212eq2.9} with $\sqrt{n-1}+\sqrt{2}\le b<\infty$. If $f' < 0$ in a neighborhood of $b$ and $f(b)>0$, then there exists a point $c$ in $[b-\sqrt{2}, b)$ such that $f'(c) = 0$.
\end{lem}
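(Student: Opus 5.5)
The plan is to argue by contradiction. Suppose $f'(r)\neq 0$ for every $r\in[b-\sqrt{2},b)$. Since $f'<0$ near $b$, continuity then gives $f'<0$ on the whole interval $[b-\sqrt{2},b)$. In particular $f$ is decreasing there. Reading $f(b)$ as $\lim_{r\to b^-}f(r)$, this gives $f(r)> f(b)>0$ for all $r\in[b-\sqrt{2},b)$.

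Next I use the hypothesis $b\ge \sqrt{n-1}+\sqrt{2}$. It gives $r\ge\sqrt{n-1}$ on the interval, hence $r-\frac{n-1}{r}\ge 0$ and therefore $(r-\frac{n-1}{r})f'\le 0$. Substituting into \eqref{231212eq2.9} and using $\lambda<0$, I get
\[
\frac{f''}{1+(f')^2}\le -f+\lambda\sqrt{1+(f')^2}<-f<0 \quad \text{on } [b-\sqrt{2},b).
\]
So $f''<0$ there as well, and $f$ is decreasing and concave.

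I then switch to the backward variable $s=b-r\in(0,\sqrt{2}]$ and set $g(s)=f(b-s)$. In this variable $g>0$, $g_s=-f'>0$, and the inequality above reads $(\arctan g_s)_s\le -g$. Writing $\varphi=\arctan g_s\in(0,\pi/2)$, we have $\varphi_s\le -g$ and $g_s=\tan\varphi$. Multiplying these gives
\[
\frac{d}{ds}\bigl(-\log\cos\varphi\bigr)=\tan\varphi\,\varphi_s\le -g\,g_s=-\frac{d}{ds}\Bigl(\frac{g^2}{2}\Bigr),
\]
so the quantity $-\log\cos\varphi+\frac{g^2}{2}$ is nonincreasing in $s$.

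This yields the following picture as one moves leftward from $b$. The angle $\varphi$ starts at most $\pi/2$ and decreases at rate at least $g$. Meanwhile $g$ grows, because $g_s=\tan\varphi>0$, and $g$ stays above $g(0)=f(b)>0$. The aim is to show that $\varphi$ must reach $0$, i.e.\ $f'=0$, before $s$ exceeds $\sqrt{2}$, which contradicts the assumption.

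The natural estimate is $\varphi(s)\le \frac{\pi}{2}-\int_0^s g$, combined with a lower bound on $g$ obtained from $g_s=\tan\varphi$. The constraint that $-\log\cos\varphi+\frac{g^2}{2}$ is nonincreasing ties the two quantities together: while $\varphi$ is close to $\pi/2$ the function $g$ grows quickly, and as $\varphi$ decreases the rate $g$ is already large. I would also keep the extra term $\lambda\sqrt{1+(f')^2}=\lambda/\cos\varphi$, which gives $\varphi_s\le -g-|\lambda|$ as an additional push.

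The main obstacle is making this quantitative with the absolute constant $\sqrt{2}$. The bound $\varphi_s\le -g$ alone only gives a time of order $\pi/(2f(b))$, and that depends on $f(b)$. To obtain a bound independent of $f(b)$ and of $\lambda$, I would first try the comparison $\varphi_s\le -g$ together with the integrated relation $\frac{g^2}{2}-\frac{g(0)^2}{2}\le \log\cos\varphi(0)-\log\cos\varphi$. The hope is to extract a differential inequality for $\varphi$ alone whose solution leaves $(0,\pi/2)$ within length $\sqrt{2}$. If that fails, I would go back to using the discarded term $(r-\frac{n-1}{r})f'$ together with $r\ge\sqrt{n-1}$ to gain additional decay of $\varphi$.
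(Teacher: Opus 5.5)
Your reduction is correct as far as it goes ($f'<0$, $f>f(b)>0$ and $f''<0$ on $[b-\sqrt2,b)$). The proof stops at exactly the step that carries the content of the lemma, however, and the route you propose for it cannot reach $\sqrt2$.

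After the change of variables, everything you use follows from the single inequality $\varphi_s\le -g$; the monotone quantity $-\log\cos\varphi+g^2/2$ is a consequence of it. That inequality holds with equality for solutions of $g_{ss}=-g(1+g_s^2)$. Take data $g(0)=\epsilon$, $g_s(0)=\tan\varphi_0$ with $\epsilon\ll\varphi_0\ll 1$. Then the solution is essentially $g=\epsilon\cos s+\tan\varphi_0\sin s$, whose derivative first vanishes near $s=\arctan(\tan\varphi_0/\epsilon)\approx\pi/2>\sqrt2$. Keeping the $\lambda$-term only replaces $g$ by $g+|\lambda|$ in this linearized picture, which does not help when $|\lambda|\ll\varphi_0$, and the lemma must hold for every $\lambda<0$. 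So no inequality for $\varphi$ extracted from these relations gives a length below roughly $\pi/2$. Your fallback via $(r-\frac{n-1}{r})f'$ is not worked out.

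The missing idea is to differentiate the equation, and to anchor the estimate at the left endpoint, where $f$ is largest, instead of at $b$, where $f$ may be tiny. Look at \eqref{231212eq2.10} with $f'<0$, $f''<0$, $r\ge\sqrt{n-1}$ and $\lambda<0$. Every term on its right-hand side is $\le 0$, and $\frac{n-1}{r^2}f'<0$. Hence $f'''<0$ and $f''$ is decreasing.

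Set $r_0=b-\sqrt2$. On $[r_0,b)$ this gives $f''(r)\le f''(r_0)\le -f(r_0)$, where the last inequality comes from \eqref{231212eq2.9} at $r_0$. Integrating twice and using $f'(r_0)<0$ yields $f(r)\le f(r_0)\bigl(1-\tfrac12(r-r_0)^2\bigr)$, hence $f(b)\le 0$, the desired contradiction.

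The monotonicity of $f''$ is what upgrades the comparison $f''\le -f$ to $f''\le -\max f$. The former, like your inequality, only yields the harmonic-oscillator length $\pi/2$. The latter is precisely where the constant $\sqrt2$ comes from.
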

\begin{proof}
	We adopt a contradiction argument from \cite{D}.
	Suppose $f'(r)<0$ for $r$ in $[b-\sqrt{2},b)$.
	Then $f>0$ for $r$ in $[b-\sqrt{2},b)$ since $f(b)>0$, and therefore $f''<0$ in $[b-\sqrt{2},b)$, $f''(b-\sqrt{2}) \le -f(b-\sqrt{2})$ by (\ref{231212eq2.9}).
	Using (\ref{231212eq2.10}), we also have $f'''<0$ in $[b-\sqrt{2},b)$.
	Integrating from $b-\sqrt{2}$ to $r$, we get \[ f(r) \le f(b-\sqrt{2})\left[ 1- \frac{1}{2}(r-(b-\sqrt{2}))^2 \right] \] for $r$ in $[b-\sqrt{2},b)$.
	This tells us that $f(b) \le 0$, which is a contradiction.
\end{proof}

\section{Global splicing and classification of profile curves}

	In the previous discussion, we analyzed the local behavior of the profile curves where they can be written as graphs over the $r$-axis. However, a profile curve may bend and lose its graph representation, which occurs when $\dot r=\sin\theta = 0$. To study the global geometry of these curves, we need to track these turning points and piece the local graphical solutions together.
	Now we go back to the profile curves  $\gamma_\delta(s) = P(\Gamma_\delta(s))$.
	For $0<\delta < C_\lambda$, we note that if $\sin \theta_\delta(c) = 0$ then $\dot{\theta}_\delta(c) \ne 0$ otherwise $r_\delta(s)=C_\lambda$ or $r_\delta(s)=C_{-\lambda}$ for all $s$ by the uniqueness theorem for the system (\ref{0316eq2.5}).
	As a consequence, $\sin \theta_\delta(s) \ne 0$ in a deleted neighborhood of $c$, and the sign of $\sin \theta_\delta$ in $(c-\varepsilon,c)$ is different from that of $\sin \theta_\delta$ in $(c,c+\varepsilon)$.
    We are able to give the following definition as in \cite{A,CW1, LCW}.
	\begin{defn}\label{231116def3.1}
		For $0<\delta < C_\lambda$, we define:
			\begin{enumerate}[1.]
			\item Let $ S(\delta) > 0$ be the extended real number such that $\Gamma_\delta(s) : [0, S(\delta)) \to \mathbb{H} \times \R$ is the right maximally extended solution to the system \eqref{0316eq2.5}.
			\item For any positive integer $k$, let $s_{k}(\delta) > 0$ be the arc length parameter at the $k^{th}$ instance (if any) where $\sin\theta_\delta = 0$. If this never occurs, we set $s_{k}(\delta) = S(\delta)$. In particular, we put $s_{0}(\delta) =0$.
			\item Let $s_*(\delta) > 0$ be the arc length parameter at the first instance (if any) where $\dot{\theta}_\delta = 0$. If this never occurs, we set $s_*(\delta) = S(\delta)$.
		\end{enumerate}
	\end{defn}

Similarly, we can also define:
\begin{defn}\label{def3.2}
	For a real number $b$, we define:
	\begin{enumerate}[1.]
		\item Let $ \bar{S}(b) > 0$ be the extended real number such that $\overline{\Gamma}_b(s) : (0, \bar{S}(b)) \to \mathbb{H} \times \R$ is the maximally extended solution to the system \eqref{0316eq2.5}.
		\item For any positive integer $k$, let $\bar{s}_{k}(b) > 0$ be the arc length parameter at the $k^{th}$ instance (if any) where $\sin\bar{\theta}_b = 0$. If this never occurs, we take $\bar{s}_{k}(b) = \bar{S}(b)$. In particular, put $\bar{s}_{0}(b) =0$.
	\end{enumerate}
\end{defn}

Henceforth we assume $0<\delta < C_\lambda$.
when there is no risk of confusion, we may write $S(\delta)$, $s_{k}(\delta)$, $s_*(\delta)$, $\bar{S}(b)$ and $\bar{s}_{k}(b)$ as $S$, $s_k$, $s_*$, $\bar{S}$ and $\bar{s}_k$ respectively.
By definition, we have that $s_{0}< s_{1} \le s_{2}\le s_{3}\le ...\le S$ and that $s_{k-1}=s_{k}$ implies $s_{k-1}=s_{k}= s_{k+1}=...=S$ and that $s_{k-1}<s_{k}$ implies $s_{0}< s_{1} < s_{2}< s_{3}< ...< s_{k}$.
Moreover, if $s_{k-1}<s_{k}$, then for $s\in (s_{k-1},s_{k})$, we get $\sin \theta_\delta(s)>0$ provided $k$ is odd and $\sin \theta_\delta(s)<0$ provided $k$ is even.
A similar statement for $\bar{s}_k$, $\bar{S}$ also holds.

If $s_{k-1}<s_{k}$, then the curve $\gamma_\delta(s)$, $s_{k-1}<s<s_{k}$ can be written as a graph  over the $r$-axis.
We denote this graph by $(f_{k,\delta}(r), r)$.
If $k$ is odd, then the function $f_{k,\delta}(r)$ defined on $(r_\delta(s_{k-1}), r_\delta(s_{k}))$ is a maximally extended solution to (\ref{0526eq2.8}).
If $k$ is even, then the function $f_{k,\delta}(r)$ defined on $(r_\delta(s_{k}), r_\delta(s_{k-1}))$ is a maximally extended solution to (\ref{231212eq2.9}).
A similar statement for $\overline{\gamma}_b(s)$ also holds, and the corresponding function will be denoted by $\bar{f}_{k,b}$.
Without ambiguity, we may write $f_{k,\delta}$ and $\bar f_{k,b}$ as $f_{k}$ and $\bar f_k$ respectively.

Using definition \ref{def3.1}, we define the classes of $\gamma_\delta$ when $\delta$ in $(0,C_\lambda)$ and the classes of $\overline{\gamma}_b$ as follows:
\begin{defn}\label{231208def3.2}
	For $\delta \in (0, C_\lambda)$, $\gamma_\delta$ is said to belong to class $\mathcal{C}_k(N_1, \dots, N_k)$
	if $s_0<s_1<...<s_{k-1}<s_k$ and $f_1$ is of type $N_1$, $f_2$ is of type $N_2$,\,...\,,\,$f_{k}$ is of type $N_k$, where $N_i \in \left\lbrace1,2,3 \right\rbrace$, $i=1, ...,k$.
\end{defn}
\begin{defn}\label{240113def3.3}
    For a real number $b$, $\overline{\gamma}_b$ is said to to belong to class $\mathcal{C}_k(N_1, \dots, N_k)$ if $\bar{s}_0<\bar{s}_1<...<\bar{s}_{k-1}<\bar{s}_k$ and $\bar{f}_1$ is of type $N_1$, $\bar f_2$ is of type $N_2$, ... ,$\bar{f}_{k}$ is of type $N_k$, where $N_i \in \left\lbrace1,2,3 \right\rbrace$, $i=1, ...k$.
\end{defn}

 Using proposition \ref{231204lem3.3}, we have the following corollaries.
\begin{cor}\label{240114cor3.9}
	Let $\lambda<0$, $\delta\in(0,C_\lambda)$. Suppose $k>0$ is even with $s_{k-1}<s_k$.
	Then $s_k<S$ if and only if $f_k$ is of type 1 or type 2, $s_k=S$ if and only if $f_k$ is of type 3.
\end{cor}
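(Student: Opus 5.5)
We use the fact that on $(s_{k-1},s_k)$ the curve $\gamma_\delta$ is the graph $(f_k(r),r)$ of a maximally extended solution $f_k:(a,b)\to\R$ of \eqref{231212eq2.9}, with $a=r_\delta(s_k)$ (or the corresponding limit) and $b=r_\delta(s_{k-1})$. Since $k$ is even, $\sin\theta_\delta<0$ on this interval, so increasing $s$ corresponds to decreasing $r$. Thus the question $s_k<S$ versus $s_k=S$ is decided entirely by the behaviour of $f_k$ at its \emph{left} endpoint $a$. The plan is to determine the sign of $f_k'f_k''$ near $a$ in each type, and then apply proposition \ref{231206lem3.5} or proposition \ref{231204lem3.3}.

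First we record two facts about the right endpoint.
\begin{enumerate}[1.]
\item Since $k-1\ge 1$ and $s_{k-1}<s_k\le S$, the parameter $s_{k-1}$ is a genuine turning point in $\mathbb{H}$. Hence $b=r_\delta(s_{k-1})<\infty$.
\item $f_k$ is not constant. The constant solution $f\equiv\lambda$ is the whole vertical line, which is defined on $(0,\infty)$ and so has $b=\infty$.
\end{enumerate}
Consequently, by proposition \ref{240112lem3.3}, $f_k'f_k''$ cannot be negative near $b$.

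\textbf{Type 3.} Here $f_k'f_k''$ never vanishes, so it has constant sign on $(a,b)$. By the observation above, this sign must be positive. Proposition \ref{231206lem3.5} then gives $a=0$ and $f_k'(r)\to0$ as $r\to0^+$. So $\gamma_\delta$ runs into the axis $r=0$ and leaves $\mathbb{H}$ without $\sin\theta_\delta$ vanishing inside $\mathbb{H}$. Since $\Gamma_\delta$ is maximal in $\mathbb{H}\times\R$, this forces $s_k=S$.

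\textbf{Types 1 and 2.} Let $c$ be the unique zero of $f_k'$, respectively of $f_k''$. Corollary \ref{0528cor3.1} shows that on $(a,c)$ the functions $f_k'$ and $f_k''$ have opposite signs, i.e. $f_k'f_k''<0$ there. Apply proposition \ref{231204lem3.3}, which needs $\lambda<0$, to the restriction $f_k|_{(a,c)}$; this restriction is still left maximally extended. In either sign case we get $a>0$ and $\lim_{r\to a^+}f_k(r)$ finite.

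We then argue that $|f_k'|\to\infty$ at $a$. Indeed, $f_k'$ is monotone near $a$, so it has an extended-real limit. If that limit were finite, the data $(a,f_k(a),f_k'(a))$ would lie in the regular region $r>0$, and $f_k$ could be extended further to the left, contradicting left maximality.

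Hence $\gamma_\delta(s)\to(f_k(a),a)\in\mathbb{H}$ with vertical slope in the $(x,r)$-plane, i.e. $\sin\theta_\delta\to0$. The arc length to that point is finite, because the curve stays in a bounded region and $|f_k'|\lesssim (r-a)^{-1/2}$ by the comparison estimate in the proof of proposition \ref{231204lem3.3}. So the solution of \eqref{0316eq2.5} continues past this point, and it is the $k$-th zero of $\sin\theta_\delta$. This gives $s_k<S$.

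Since the three types are exhaustive and mutually exclusive for a nonconstant $f_k$, both equivalences follow. I expect the only delicate point to be the last step in types 1 and 2: converting the blow-up of $f_k'$ at $a>0$ into an honest zero of $\sin\theta_\delta$ at a finite arc-length parameter strictly before $S$. This rests on the $(r-a)^{-1/2}$ bound from proposition \ref{231204lem3.3} and on regularity of \eqref{0316eq2.5} away from $r=0$.
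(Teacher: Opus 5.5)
Your proposal is correct and follows the paper's route. The paper obtains this corollary directly from Proposition \ref{231204lem3.3}: for types 1 and 2, $f_k'f_k''<0$ near the left endpoint, so $a>0$ with a finite limit and $\gamma_\delta$ reaches a genuine turning point in $\mathbb{H}$. It pairs this with Proposition \ref{231206lem3.5}, which forces $a=0$ for type 3. You have correctly supplied the details the paper leaves implicit, notably ruling out $f_k'f_k''<0$ in type 3 via Proposition \ref{240112lem3.3} and $r_\delta(s_{k-1})<\infty$.

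One cosmetic remark: at $r=a$ the tangent in the $(x,r)$-plane becomes horizontal; it is the graph of $f_k$ over the $r$-axis that turns vertical. Also, finiteness of arc length follows more simply because both coordinates are monotone with finite limits.
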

\begin{cor}\label{240114cor3.10}
	Let $\lambda<0$, $b$ be a real number.  Suppose $k>0$ is even with $\bar s_{k-1}<\bar s_k$.
	Then $\bar s_k<\bar S$ if and only if $\bar f_k$ is of type 1 or type 2, $\bar s_k=\bar S$ if and only if $\bar f_k$ is of type 3.
\end{cor}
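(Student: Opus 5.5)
The statement to prove is Corollary \ref{240114cor3.10}, the analogue for $\overline{\gamma}_b$ of Corollary \ref{240114cor3.9}. The plan is to reduce both directions to the endpoint behaviour of $\bar f_k$ given by Propositions \ref{231206lem3.5} and \ref{231204lem3.3}.

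Since $k$ is even and $\bar s_{k-1}<\bar s_k$, we have $\sin\bar\theta_b<0$ on $(\bar s_{k-1},\bar s_k)$. Hence $\bar f_k$ is a maximally extended solution to \eqref{231212eq2.9} on $(a,\beta)$, where $a=\bar r_b(\bar s_k)$ when $\bar s_k<\bar S$, and $\beta=\bar r_b(\bar s_{k-1})$. Moving along the curve in increasing $s$ corresponds to $r$ decreasing. So the question $\bar s_k<\bar S$ is exactly whether the curve reaches a point with $\sin\bar\theta=0$, $r>0$, i.e. whether $\lim_{r\to a^+}|\bar f_k'(r)|=\infty$ with $a>0$ and $\lim\bar f_k$ finite. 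The alternative is that the solution leaves $\mathbb H\times\R$, i.e. $a=0$; note $\bar S$ is the maximal existence parameter in $\mathbb H\times\R$.

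For the type-3 direction, suppose $\bar f_k$ is of type 3. Then $\bar f_k'\bar f_k''$ has constant sign on $(a,\beta)$. If it is positive, Proposition \ref{231206lem3.5} gives $a=0$ and $\bar f_k'\to0$. The curve then hits the $x$-axis with finite $x$ (since $\bar f_k$ is monotone and $f'$ is bounded near $0$), so the solution leaves $\mathbb H$, no further zero of $\sin\bar\theta$ occurs, and $\bar s_k=\bar S$. If the product is negative, I would invoke the monotonicity in Lemma \ref{0527lem3.1}: negativity at a point propagates to the left. I expect this sign to be impossible on the whole interval for an even piece emanating downward from a turning point. Indeed, at $r\to\beta^-$ we have $|\bar f_k'|\to\infty$, and the curvature sign at the turning point $\bar s_{k-1}$ determined by $\dot\theta\neq0$ forces $f'f''>0$ near $\beta$ (by Proposition \ref{240112lem3.3}, $f'f''<0$ near the right end would force $\beta=\infty$, contradicting finiteness of $\bar r_b(\bar s_{k-1})$). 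Thus type 3 means $f'f''>0$ throughout, and we get $\bar s_k=\bar S$.

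For the converse and the type-1/type-2 direction, suppose $\bar f_k$ is of type 1 or 2. Then by Corollary \ref{0528cor3.1}, near the left endpoint $a$ we have $f'f''<0$: in type 1, $f''$ has a fixed sign and $f'$ changes sign; in type 2, $f''$ changes sign while $f'$ keeps its sign. Proposition \ref{231204lem3.3} (with $\lambda<0$) then yields $0<a<C_{\pm\lambda}$ with $\lim_{r\to a^+}\bar f_k$ finite. As in its proof, $|\bar f_k'|\to\infty$ at $a$ by left maximality. So the curve reaches the interior point $(\lim \bar f_k, a)\in\mathbb H$ with a vertical... rather horizontal tangent in the $(x,r)$ picture ($\sin\bar\theta=0$), and the ODE system \eqref{0316eq2.5} continues there. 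This gives $\bar s_k<\bar S$. Since the three types exhaust the non-constant case, and constant $f=\lambda$ is excluded because $\sin\bar\theta$ changes sign at $\bar s_{k-1}$, the two equivalences follow. The main obstacle is the rigorous exclusion of $f'f''<0$ near $\beta$ in the type-3 case; I would handle it via the sign of $\dot{\bar\theta}$ at $\bar s_{k-1}$ as sketched.
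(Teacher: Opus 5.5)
Your proposal is correct and follows essentially the same route as the paper, which simply derives this corollary from Proposition \ref{231204lem3.3} together with Proposition \ref{231206lem3.5}. Type 3 forces $\bar f_k'\bar f_k''>0$ on the whole interval, since $|\bar f_k'|\to\infty$ at $\bar r_b(\bar s_{k-1})$; hence $a=0$ and $\bar s_k=\bar S$. Types 1 and 2 give $\bar f_k'\bar f_k''<0$ near the left endpoint, so Proposition \ref{231204lem3.3} yields $a>0$ with a finite limit, hence a further turning point and $\bar s_k<\bar S$. The converses then follow, as you say, from the three types being exhaustive and mutually exclusive.
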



For the upcoming continuity argument (the shooting method) in section 7, it is crucial to show that these global classifications are robust. The following lemmas ensure that the class of a curve remains unchanged under small perturbations to the initial data.
\begin{lem}\label{231209lem3.3}
	Let $\delta_0 \in (0, C_\lambda)$ and $k \ge 1$. Suppose that $s_k(\delta_0) < S(\delta_0)$ and the curve $\gamma_{\delta_0}$ belongs to the class $\mathcal{C}_k(N_1, \dots, N_k)$. Then there exists an $\varepsilon > 0$ such that for all $\delta \in (\delta_0 - \varepsilon, \delta_0 + \varepsilon)$, we have $s_k(\delta) < S(\delta)$ and $\gamma_{\delta}$ also belongs to $\mathcal{C}_k(N_1, \dots, N_k)$.
\end{lem}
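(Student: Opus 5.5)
We argue by continuous dependence of solutions of \eqref{0316eq2.5} on initial data, combined with transversality at the zeros of $\sin\theta$ and the separation of critical and inflection points given by Corollary \ref{0528cor3.1}. The initial point $(0,\delta_0,0)$ lies in $\mathbb{H}\times\R$, away from the singular set $r=0$. Since $s_k(\delta_0)<S(\delta_0)$, the solution $\Gamma_{\delta_0}$ exists on a compact interval $[0,s_k(\delta_0)+\eta]$ with $r_{\delta_0}$ bounded below by a positive constant there. Standard ODE theory then gives the following: for $\delta$ close to $\delta_0$, $\Gamma_\delta$ exists on the same interval, so $S(\delta)>s_k(\delta_0)+\eta/2$, and $\Gamma_\delta\to\Gamma_{\delta_0}$ in $C^1$ (indeed $C^2$, using the equation) uniformly on it.

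\textbf{Step 1 (the turning points persist).} As observed before Definition \ref{231116def3.1}, at each $s_j(\delta_0)$ with $1\le j\le k$ we have $\sin\theta_{\delta_0}=0$ and $\dot\theta_{\delta_0}\neq0$, so these zeros are transversal. Away from small neighborhoods of the $s_j(\delta_0)$, $|\sin\theta_{\delta_0}|$ is bounded below on $(0,s_k(\delta_0)+\eta]$.

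The initial point $s_0=0$ needs separate care, since there $\sin\theta=0$. From \eqref{0316eq2.5}, $\dot\theta_\delta(0)=\frac{n-1}{\delta}-\delta+\lambda$, which is positive because $\delta<C_\lambda$. Hence $\sin\theta_\delta>0$ on $(0,\tau]$ uniformly for $\delta$ near $\delta_0$.

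By the implicit function theorem (or simply uniform convergence together with monotonicity of $\theta_\delta$ near each zero), for $\delta$ near $\delta_0$ the function $\sin\theta_\delta$ has exactly one zero $s_j(\delta)$ near each $s_j(\delta_0)$, depending continuously on $\delta$, and no other zeros in $(0,s_k(\delta_0)+\eta/2)$. Therefore $s_0<s_1(\delta)<\dots<s_k(\delta)<S(\delta)$.

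\textbf{Step 2 (the types persist).} Fix $j\le k$. The type of $f_{j,\delta}$ is read off from zeros, on the open arc $(s_{j-1},s_j)$, of $\cos\theta$ (where $f'=0$) and of $\dot\theta$ (where $f''=0$, since $f''/(1+f'^2)^{3/2}$ is, up to sign, the curvature $\dot\theta$). By Corollary \ref{0528cor3.1}, a nonconstant $f$ has at most one such zero in total, and $f_{j,\delta_0}$ is nonconstant. So on the arc there is at most one zero of $\cos\theta_{\delta_0}\dot\theta_{\delta_0}$ in the interior.

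\emph{Type 1.} Here $\cos\theta$ vanishes at an interior point $\bar s$. At $\bar s$ we have $\sin\theta=\pm1$, so the sign of $\cos\theta$ changes transversally. Consequently $\cos\theta_\delta$ has a zero near $\bar s$ for $\delta$ close to $\delta_0$, and $f_{j,\delta}$ has a critical point. Corollary \ref{0528cor3.1} forbids any additional zero of $f''$, so $f_{j,\delta}$ is again of type 1.

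\emph{Type 2.} Here $\dot\theta_{\delta_0}$ vanishes at an interior point. Lemma \ref{0527lem2.1} gives $f'''\neq0$ there, so the zero is transversal, and it persists under perturbation; $f_{j,\delta}$ is again of type 2.

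\emph{Type 3.} This is the main obstacle: we must show that no zero of $\cos\theta_\delta$ or $\dot\theta_\delta$ appears. Interior compact subarcs are handled by uniform convergence. The danger is near the endpoints $s_{j-1},s_j$, where $\cos\theta=\pm1$ but $\dot\theta$ could approach $0$.

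We use that $\dot\theta_{\delta_0}(s_{j-1})\neq0$ and $\dot\theta_{\delta_0}(s_j)\neq0$. By continuity of $\dot\theta_\delta$ in $(s,\delta)$, $\dot\theta_\delta$ is bounded away from zero on fixed neighborhoods of these endpoints, uniformly in $\delta$. Likewise $|\cos\theta_\delta|$ is close to $1$ there. Hence no new zeros of $\cos\theta_\delta\,\dot\theta_\delta$ appear near the endpoints, and in the middle they are excluded by uniform bounds.

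The same endpoint argument also shows that, in the type 1 and type 2 cases, no spurious zero escapes to the ends of the arc. Combining this with Corollary \ref{0528cor3.1} gives exact preservation of each type $N_j$. Shrinking $\varepsilon$ finitely many times, once for each $j\le k$, completes the proof.
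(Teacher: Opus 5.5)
Your proposal is correct and follows the route the paper indicates. The paper's entire proof is the remark that $\ddot r\neq 0$ wherever $\dot r=0$ and $\ddot\theta\neq 0$ wherever $\dot\theta=0$, combined with continuous dependence on $\delta$. Your Steps 1--2 expand exactly that, and using Corollary~\ref{0528cor3.1} to reduce the type-1 and type-2 cases to the persistence of a single sign change is a clean way to fill in the details.

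Two small fixes. In type 1, the transversality of the zero of $\cos\theta$ at $\bar s$ comes from $\dot\theta(\bar s)\neq 0$, which is Corollary~\ref{0528cor3.1}, not from $\sin\theta=\pm1$. Also, $\eta$ should be chosen so that $\sin\theta_{\delta_0}$ has no zero in $(s_k(\delta_0),s_k(\delta_0)+\eta]$.
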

\begin{lem}\label{231214lem2}
	Let $b_0<-\lambda$ and $k \ge 1$. Suppose that $\bar s_k(b_0) < \bar S(b_0)$ and the curve $\bar \gamma_{b_0}$ belongs to the class $\mathcal{C}_k(N_1, \dots, N_k)$. Then there exists an $\varepsilon > 0$ such that for all $b \in (b_0 - \varepsilon, b_0 + \varepsilon)$, we have $\bar s_k(b) < \bar S(b)$ and $\bar \gamma_{b}$ also belongs to $\mathcal{C}_k(N_1, \dots, N_k)$.
\end{lem}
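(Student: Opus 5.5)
The plan mirrors the proof of Lemma \ref{231209lem3.3}, with one extra ingredient: the singular initial point on the $x$-axis. Continuous dependence there comes from Lemma \ref{lem2.2}.

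\textbf{Step 1: leaving the axis.} Lemma \ref{lem2.2} (applied with the sign for $\dot r>0$, i.e.\ \eqref{0526eq2.8}) gives an analytic family $f(r;\varepsilon)$ on $[0,\delta]\times[-\delta,\delta]$ with $f(0;\varepsilon)=b_0+\varepsilon$ and $f'(0;\varepsilon)=0$. By the uniqueness part, this family represents $\bar\gamma_b$ near the axis for $|b-b_0|<\delta$. Fix a small $r_1\in(0,\delta)$. Then the point $\overline\Gamma_b$ at height $r_1$, namely $(f(r_1;b-b_0),\,r_1,\,\arctan(1/f'))$ with the appropriate angle, depends continuously on $b$. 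From that point on, the regular system \eqref{0316eq2.5} applies, and solutions depend smoothly on initial data there. Hence $\overline\Gamma_b$ converges to $\overline\Gamma_{b_0}$ uniformly, together with derivatives, on every compact parameter interval $[\,s(r_1),T]$ with $T<\bar S(b_0)$. In particular $\bar S(b)>T$ for $b$ close to $b_0$.

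\textbf{Step 2: persistence of the turning points.} Take $T\in(\bar s_k(b_0),\bar S(b_0))$. At each $\bar s_j(b_0)$ with $j\le k$, the function $\sin\bar\theta_{b_0}$ vanishes transversally, since $\dot{\bar\theta}\neq0$ there by the argument given before Definition \ref{231116def3.1}. By $C^1$ closeness and the implicit function theorem, $\sin\bar\theta_b$ has exactly one simple zero near each $\bar s_j(b_0)$. Away from these neighbourhoods, $\sin\bar\theta_b$ is bounded away from zero on $[s(r_1),T]$. On $(0,s(r_1)]$ there is no zero, because $f(\cdot;\varepsilon)$ is a graph over $r$. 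Consequently $\bar s_j(b)\to\bar s_j(b_0)$ for $j\le k$, and $\bar s_k(b)<T<\bar S(b)$.

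\textbf{Step 3: persistence of the types.} We show $\bar f_{j,b}$ has the same type $N_j$ as $\bar f_{j,b_0}$. By Corollary \ref{0528cor3.1}, the zeros of $f'$ and of $f''$ in each segment correspond to zeros of $\cos\bar\theta$ and of $\dot{\bar\theta}$, and each is isolated. These zeros are transversal for the non-constant solution $\bar f_{j,b_0}$. For $f'=0$, simplicity is immediate: $\cos\theta=0$ forces $\dot\theta\ne0$ unless the solution is constant. For $f''=0$, Lemma \ref{0527lem2.1} gives $f'''\neq0$. Transversal zeros persist under perturbation, so each zero of $\bar f_{j,b_0}$ in the interior has exactly one nearby counterpart for $\bar f_{j,b}$.

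\textbf{Main obstacle: zeros escaping through or entering at the segment endpoints.} Near a turning point $\bar s_{j}$, $f'\to\pm\infty$, so no zero of $\cos\bar\theta$ can lie there. A zero of $\dot{\bar\theta}$ (that is, of $f''$) could, however, approach the endpoint. To exclude this I use the sign of $\dot{\bar\theta}(\bar s_j(b_0))\neq0$: it is preserved nearby, which gives $\dot{\bar\theta}_b\neq0$ on a uniform neighbourhood of $\bar s_j(b)$. At the starting end on the axis, Step 1 gives uniform convergence of $f''(\cdot;\varepsilon)$ on $[0,r_1]$. There $f''(0;\varepsilon)$ is nonzero with the sign fixed by the equation, namely $n f''(0)=-(b+\varepsilon)-\lambda$ (with the appropriate sign). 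This is exactly why the hypothesis $b_0<-\lambda$ is needed: it makes $f''(0)>0$, so $f''$ has no zeros in $[0,r_1]$ uniformly in $b$. Combining the interior persistence of Step 3 with this endpoint control, the count of zeros of $f'$ and of $f''$ on each segment is unchanged. Hence $\bar\gamma_b\in\mathcal C_k(N_1,\dots,N_k)$ for $|b-b_0|<\varepsilon$.
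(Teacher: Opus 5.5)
Your proposal is correct and follows the paper's argument. The paper proves this lemma only by remarking that $\ddot r\neq0$ whenever $\dot r=0$ and $\ddot\theta\neq0$ whenever $\dot\theta=0$, together with continuous dependence; you fill this in carefully, including the singular start via Lemma~\ref{lem2.2} and the relation $n\bar f_1''(0)=-b-\lambda$, which in fact forces $N_1=3$. The only slip is in Step 2: take $T\in(\bar s_k(b_0),\bar s_{k+1}(b_0))$, since otherwise $\sin\bar\theta_b$ need not be bounded away from zero outside the neighbourhoods of the first $k$ zeros.
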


In \cite{LCW}, the authors also proved that:
\begin{lem}\label{231211lem3.4}
	Let $\delta_0$ be in $(0,C_\lambda)$ and $\gamma_{\delta_0}$ belong to \(\mathcal{C}_1(N)\) with \(N=1\) or \(2\).
	Then there is an $\varepsilon>0$ such that, for $\delta$ in $(\delta_0-\varepsilon,\delta_0+\varepsilon)$, $\gamma_\delta$ also belongs to \(\mathcal{C}_1(N)\).
\end{lem}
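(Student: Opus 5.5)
The plan is to combine continuous dependence of solutions to \eqref{0316eq2.5} on $\delta$ with the rigidity statements of Lemma \ref{0527lem2.1} and Corollary \ref{0528cor3.1}. Since $\gamma_{\delta_0}\in\mathcal{C}_1(N)$, we have $s_1(\delta_0)<S(\delta_0)$ and $f_{1,\delta_0}$ is of type $N$. I will treat $N=1$ and $N=2$ separately, using a transversal crossing of a zero set that is stable under perturbation.

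\emph{Type 1.} There is a unique $s'\in(0,s_1(\delta_0))$ with $\cos\theta_{\delta_0}(s')=0$, which corresponds to $f_1'=0$. By Corollary \ref{0528cor3.1}, $f_1''$ has a fixed nonzero sign, so $\dot\theta_{\delta_0}(s')\neq 0$. The zero of $\cos\theta$ is therefore transversal, and it persists for nearby $\delta$ by the implicit function theorem. Likewise $\sin\theta_{\delta_0}(s_1)=0$ with $\dot\theta_{\delta_0}(s_1)\ne0$; this holds because $\delta_0<C_\lambda$ excludes the constant solutions, as noted before Definition \ref{231116def3.1}. Hence $s_1(\delta)$ depends continuously on $\delta$ and $s_1(\delta)<S(\delta)$ for $\delta$ near $\delta_0$.

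One must also check that $\sin\theta_\delta$ has no other zero on $(0,s_1(\delta))$, in particular near $s=0$. There $\theta_\delta(0)=0$ and $\dot\theta_\delta(0)=\frac{n-1}{\delta}-\delta+\lambda$, and this quantity is nonzero, positive, for $\delta<C_\lambda$. So $\sin\theta_\delta>0$ on $(0,\eta]$ uniformly in $\delta$, and compactness on $[\eta,s_1(\delta_0)-\eta]$ handles the rest.

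The perturbed $f_{1,\delta}$ then has a zero of $f'$, and I need it to be unique. By Corollary \ref{0528cor3.1}, once $f'$ vanishes at a point the function is convex or concave on the whole interval, and that forces the critical point to be unique.

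\emph{Type 2.} The analogous argument uses the quantity $\dot\theta$, since $f''=0$ corresponds to $\dot\theta=0$ while $\cos\theta\neq0$. Transversality of this zero follows from Lemma \ref{0527lem2.1}: $f'''(c)\ne0$, which translates into $\ddot\theta(s_*)\ne0$. This makes $s_*(\delta)$ continuous. Uniqueness again comes from Corollary \ref{0528cor3.1}, which also ensures no zero of $f'$ appears.

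\emph{Main obstacle.} The hard part is the behavior at the endpoints of the graph interval. A zero of $f'$ or $f''$ could enter through $s_1(\delta)$, or near $s=0$ where $f_1$ comes in from $r=\delta$ with $f'\to\pm\infty$. To rule this out I would use the signs of $\dot\theta$ and $\cos\theta$ at $s=0$ and $s=s_1$, which are strict for $\delta_0$, and extend them to small uniform neighborhoods by continuity. Note that $\cos\theta=\pm1$ at both endpoints.

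This reduces the count of zeros on $(0,s_1(\delta))$ to a compact subinterval. There, transversality combined with uniform $C^1$ convergence gives exactly one zero of the relevant quantity and none of the other.
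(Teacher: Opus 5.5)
Your core mechanism is the one the paper relies on; the paper defers to \cite{LCW}, noting only that zeros of $\dot r$ and of $\dot\theta$ are nondegenerate. The mechanism is this: a transversal zero of $\cos\theta_{\delta_0}$ (type 1) or of $\dot\theta_{\delta_0}$ (type 2) inside the first graphical arc persists under continuous dependence. Corollary \ref{0528cor3.1} then gives uniqueness and excludes zeros of the other quantity.

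However, your opening inference does not hold. Membership in $\mathcal{C}_1(N)$ only requires $s_0<s_1$, which is automatic, together with the type of $f_1$. It does not give $s_1(\delta_0)<S(\delta_0)$. The paper itself must invoke Proposition \ref{231211lem3.3} (for $\lambda<0$) to obtain $s_1(\delta_s)<S(\delta_s)$ after knowing $\gamma_{\delta_s}\in\mathcal{C}_1(2)$. Nothing analogous is available for type 1, whose tail has $f'<0$, $f''<0$, or for general $\lambda$. Consequently, several parts of your argument are unjustified when $s_1(\delta_0)=S(\delta_0)$, which then equals $\infty$:
\begin{itemize}
\item transversality at $s_1(\delta_0)$;
\item continuity of $s_1(\delta)$;
\item the claim $s_1(\delta)<S(\delta)$;
\item compactness of $[\eta,s_1(\delta_0)-\eta]$.
\end{itemize}

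The repair is easy because none of this is needed. Let $\sigma_0$ be $s'$ (type 1) or $s_*(\delta_0)$ (type 2); both lie strictly inside $(0,s_1(\delta_0))$. Choose $\eta$ with $\sigma_0+\eta<s_1(\delta_0)$. Then:
\begin{itemize}
\item $\dot\theta_\delta(0)=\frac{n-1}{\delta}-\delta+\lambda$ is bounded below by a positive constant for $\delta$ near $\delta_0$, so $\sin\theta_\delta>0$ on $(0,\eta]$ uniformly;
\item $\sin\theta_{\delta_0}$ is bounded below on the compact set $[\eta,\sigma_0+\eta]$, and continuous dependence gives $\sin\theta_\delta>0$ there too.
\end{itemize}
Hence the perturbed zero (of $\cos\theta_\delta$ near $s'$, or of $\dot\theta_\delta$ near $s_*(\delta_0)$) lies in the first graphical arc of $\gamma_\delta$. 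This produces a zero of $f_{1,\delta}'$, respectively of $f_{1,\delta}''$. Corollary \ref{0528cor3.1}, applied on the whole maximal interval of $f_{1,\delta}$, then fixes the type. In particular, your ``main obstacle'' (zeros entering through the endpoints) cannot occur, and the endpoint analysis can simply be deleted.
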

For the proof of these lemmas, we note that \(\ddot r\ne0\) whenever \(\dot r=0\), and that \(\ddot{\theta}\ne0\) whenever \(\dot \theta=0\). Otherwise \(r\) is constant.

\section{Behavior of the curve $\bar\gamma_b$}

In the previous section, we classified the general solutions. We now focus on a specific family of profile curves, denoted by $\bar{\gamma}_b$, which intersects the $x$-axis perpendicularly at a height $b$. Our goal in this section is to understand how the geometric shape and the class of $\bar{\gamma}_b$ depend on the initial height $b$.
We begin by noting that if $b<-\lambda$ then $\bar f'_{1,b}(r)>0$ and $\bar f''_{1,b}(r)>0$. If additionally $\lambda<0$ then by proposition \ref{231211lem3.3} we know that $\bar s_1(b) < \bar S(b)$.
\begin{lem}\label{231203lem4.1}
If $\lambda < 0$, $-\lambda- \frac{1}{2\sqrt{\pi}} \le b <-\lambda $, then $\bar r_b(\bar s_1) > \sqrt{\log \frac{-1}{2\sqrt{\pi}(b+\lambda)}}+\lambda$.
\end{lem}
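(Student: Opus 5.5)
The plan is to work on the first graphical segment $\bar f_1=\bar f_{1,b}$, which solves \eqref{0526eq2.8} on $(0,\bar r_b(\bar s_1))$ with $\bar f_1(0)=b$ and $\bar f_1'(0)=0$. As noted before the statement, $\bar f_1'>0$ and $\bar f_1''>0$ there, and $\bar s_1<\bar S$ by Proposition \ref{231211lem3.3}. Since $\dot r=\sin\bar\theta_b\to 0$ at $\bar s_1$, we have $\bar f_1'(r)\to\infty$ as $r\to \bar r_b(\bar s_1)^-$. Write $\varepsilon:=-\lambda-b\in(0,\frac{1}{2\sqrt{\pi}}]$. The idea is to bound the growth of the slope by a linear differential inequality for $\sin\beta$, where $\beta=\arctan \bar f_1'\in[0,\pi/2)$, and then read off how large $r$ must be before $\sin\beta$ can reach $1$.

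The first step is a one-sided bound on the right-hand side of \eqref{0526eq2.8}. Using $-\lambda>0$ and $\sqrt{1+p^2}\le 1+p$ for $p\ge0$, we get $-\lambda\sqrt{1+(f')^2}\le -\lambda(1+f')$. Since $f'>0$, we may also drop $-\frac{n-1}{r}f'\le 0$. Because $f$ is increasing, $f+\lambda\ge b+\lambda=-\varepsilon$. Altogether this gives
\[
\frac{f''}{1+(f')^2}\le (r-\lambda)f' +\varepsilon .
\]
Next I translate this into the angle. We have $\beta'=f''/(1+(f')^2)$ and $(\sin\beta)'=\cos\beta\,\beta'$. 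Multiplying by $\cos\beta\in(0,1]$ and using $f'\cos\beta=\sin\beta$ yields
\[
(\sin\beta)'\le (r-\lambda)\sin\beta+\varepsilon .
\]

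The second step is Gronwall with the integrating factor $e^{-(r-\lambda)^2/2}$. Since $\sin\beta(0)=0$, this gives
\[
\sin\beta(r)\le \varepsilon e^{(r-\lambda)^2/2}\int_0^r e^{-(t-\lambda)^2/2}\,dt<\sqrt{2\pi}\,\varepsilon\, e^{(r-\lambda)^2/2}.
\]
Letting $r\to \bar r_b(\bar s_1)$, where $\sin\beta\to1$, we obtain $e^{(\bar r_b(\bar s_1)-\lambda)^2/2}>\frac{1}{\sqrt{2\pi}\varepsilon}$. The right-hand side exceeds $1$ since $\varepsilon\le \frac1{2\sqrt\pi}$, and $\bar r_b(\bar s_1)-\lambda>0$. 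Hence
\[
\bar r_b(\bar s_1)>\lambda+\sqrt{\log\frac{1}{2\pi\varepsilon^2}} .
\]

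The last step compares this with the claimed bound. We have $\frac{1}{2\pi\varepsilon^2}\ge \frac{1}{2\sqrt\pi\,\varepsilon}$ exactly when $\varepsilon\le\frac1{\sqrt\pi}$, which holds. Moreover $\log\frac{1}{2\sqrt\pi\varepsilon}\ge0$ by the hypothesis $b\ge -\lambda-\frac{1}{2\sqrt\pi}$. So the claimed inequality $\bar r_b(\bar s_1)>\sqrt{\log\frac{-1}{2\sqrt\pi(b+\lambda)}}+\lambda$ follows.

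The main point to be careful about is the first step: every dropped term must have the right sign on the whole interval. This is where $f'>0$, $f''>0$ and monotonicity of $f$ are used, as is $\lambda<0$ in replacing $\sqrt{1+p^2}$ by $1+p$. A second point is the endpoint behaviour. It is justified because $f$ is smooth up to $r=0$ by Lemma \ref{lem2.2}, so $\sin\beta(0)=0$ makes sense, and because $\beta\to\pi/2$ at the turning point.
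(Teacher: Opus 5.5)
Your proof is correct. It uses the same mechanism as the paper: the same one-sided bound on the right-hand side of the equation for $\bar f_{1,b}$ (drop $-\frac{n-1}{r}f'$, use $\sqrt{1+p^2}\le 1+p$ with $\lambda<0$, and use $f\ge b$), then a Gaussian integrating factor and a comparison with the Gaussian integral.

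The difference is the quantity you integrate. The paper works with the slope $h'=\bar f_{1,b}'$ itself. To get a linear inequality it must stop at an auxiliary point $r'$ where $h'(r')=1$ and use the crude bound $1\le \frac{2}{1+h'^2}$ on $(0,r')$. That doubles the exponent of the weight to $e^{-(r-\lambda)^2}$ and yields only $(r'-\lambda)^2\ge \log\frac{1}{2\sqrt\pi\,\varepsilon}$.

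Working with $\sin\beta=\cos\bar\theta_b$ absorbs the factor $\frac{1}{1+(f')^2}$ exactly. So your linear inequality holds on the whole interval up to the turning point, and no auxiliary point is needed. It gives the stronger conclusion $(\bar r_b(\bar s_1)-\lambda)^2>\log\frac{1}{2\pi\varepsilon^2}$, roughly twice the paper's exponent for small $\varepsilon$. You then correctly weaken this to the stated bound.

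One small bookkeeping point. Letting $r\to\bar r_b(\bar s_1)$ in the strict inequality $\sin\beta(r)<\sqrt{2\pi}\,\varepsilon\, e^{(r-\lambda)^2/2}$ only gives a non-strict inequality in the limit. Take the strictness from elsewhere. One option is the middle term: $\int_0^{\bar r_b(\bar s_1)} e^{-(t-\lambda)^2/2}\,dt<\sqrt{\pi/2}$ because $-\lambda>0$. The other is the final comparison, which is strict since $\varepsilon\le\frac{1}{2\sqrt\pi}<\frac{1}{\sqrt\pi}$. Either way the stated strict inequality follows.
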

\begin{proof}
	To avoid cumbersome notation, put $h(r)=\bar f_{1,b}(r)+\lambda$.
	Since $h'(0) = 0$, $h''(r) > 0$, and $\lim_{r \rightarrow \bar r_b(\bar s_1)}h'(r) = \infty$, there exists $r' \in (0, \bar r_b(\bar s_1))$ such that $h'(r') = 1$.
	For $r \in (0, r')$, we have
	\[
	\aligned
	\dfrac{{\rm d}}{{\rm d}r}(e^{-(r-\lambda)^2}h'(r))
	&= e^{-(r-\lambda)^2}h''(r) - 2(r-\lambda)e^{-(r-\lambda)^2}h'(r)\\
	&\le \dfrac{2}{1 + h'(r)^2} e^{-(r-\lambda)^2}h''(r) - 2(r-\lambda)e^{-(r-\lambda)^2}h'(r)\\
	&= 2e^{-(r-\lambda)^2}\left[ \left( r - \dfrac{n - 1}{r}\right) h'(r) - h(r) + \lambda(1 - \sqrt{1 + h'(r)^2}) \right]\\
	&\ \ \ \ - 2(r-\lambda)e^{-r^2}h'(r)\\
	&\le 2e^{-(r-\lambda)^2}\left[ \left( r - \dfrac{n - 1}{r}\right) h'(r) - h(r) - \lambda\,h'(r) \right] - 2(r-\lambda)e^{-r^2}h'(r)\\
	&\le -2e^{-(r-\lambda)^2}h(r),
	\endaligned
	\]
	where we have used the fact that $\lambda < 0$ in the penultimate inequality.
	Integrating from $0 \ {\rm to}\  r'$,
	\[
	e^{-(r'- \lambda)^2} \le -2\int_{0}^{r'}e^{-(r- \lambda)^2}h(r){\rm d}r \le -2(b+\lambda)\int_{0}^{r'}e^{-(r- \lambda)^2}{\rm d}r \le -2\sqrt{\pi}(b+\lambda).
	\]
	Hence,  we obtain $\bar r_b(\bar s_1) > r' \ge \sqrt{\log \frac{-1}{2\sqrt{\pi}(b+\lambda)}}+\lambda$ for $\lambda \le 0$, $-\lambda- \frac{1}{2\sqrt{\pi}} \le b <-\lambda$.
\end{proof}

\begin{lem}\label{231207lem4.2}
	Let $\lambda<0$ and $\max\left\lbrace 0, -\lambda - \dfrac{1}{2\sqrt{\pi}}e^{-(C_{-\lambda}+\sqrt{2}-\lambda)^2}\right\rbrace < b < -\lambda$. Then $\bar{\gamma}_b$ belongs to $\mathcal{C}_2(3,1)$.
\end{lem}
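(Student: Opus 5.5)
We must show: $\bar s_1<\bar S$, $\bar f_1$ is of type 3, $\bar s_1<\bar s_2$, and $\bar f_2$ is of type 1.

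\textbf{Step 1: the first arc.} Since $b<-\lambda$, the remark opening this section gives $\bar f_1'>0$ and $\bar f_1''>0$ on its whole interval. Hence $\bar f_1'\bar f_1''\neq 0$, so $\bar f_1$ is of type 3. Proposition \ref{231211lem3.3} then shows that the right endpoint $\bar r_b(\bar s_1)$ is finite, that $\bar f_1$ has a finite limit there, and that $\bar f_1'\to\infty$. The curve therefore reaches a vertical tangent at a finite point, so $\bar s_1<\bar S$. The next arc $\bar f_2$ is a solution of \eqref{231212eq2.9} defined on $(a_2,\bar r_b(\bar s_1))$ with $\bar f_2'\to -\infty$ as $r\to \bar r_b(\bar s_1)$. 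In particular $\bar f_2'<0$ near the right endpoint, and $\bar f_2(\bar r_b(\bar s_1))=\bar f_1(\bar r_b(\bar s_1))>b>0$.

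\textbf{Step 2: height of the turning point.} The hypothesis
\[
b>-\lambda-\tfrac{1}{2\sqrt{\pi}}e^{-(C_{-\lambda}+\sqrt2-\lambda)^2}
\]
puts $b$ in the range of Lemma \ref{231203lem4.1}. That lemma gives
\[
\bar r_b(\bar s_1)>\sqrt{\log\tfrac{-1}{2\sqrt\pi(b+\lambda)}}+\lambda>(C_{-\lambda}+\sqrt2-\lambda)+\lambda=C_{-\lambda}+\sqrt2 .
\]
We have $C_{-\lambda}\ge\sqrt{n-1}$ because $-\lambda>0$. So $B:=\bar r_b(\bar s_1)$ satisfies $\sqrt{n-1}+\sqrt2\le B<\infty$.

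\textbf{Step 3: a critical point on the second arc, and its type.} By Corollary \ref{240113cor3.3}, the left endpoint $a_2$ of $\bar f_2$ satisfies $a_2<C_{-\lambda}<B-\sqrt2$, so $\bar f_2$ is defined on $[B-\sqrt2,B)$. Lemma \ref{231205lem3.4} (where $f(b)>0$ means the limit at the endpoint, as in Step 1) then yields $c\in[B-\sqrt2,B)$ with $\bar f_2'(c)=0$. The arc $\bar f_2$ is not constant, since its derivative is unbounded. By Corollary \ref{0528cor3.1}, $\bar f_2''$ has a fixed sign on the whole interval. Because $\bar f_2'<0$ on $(c,B)$, that sign must be negative, and the same corollary shows the critical point $c$ is unique. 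Hence $\bar f_2$ is of type 1.

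By Corollary \ref{240114cor3.10}, a type-1 second arc gives $\bar s_2<\bar S$. In any case $\bar s_1<\bar s_2$ holds, because $\bar f_2$ lives on a nondegenerate interval. Therefore $\bar\gamma_b\in\mathcal C_2(3,1)$.

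\textbf{Main obstacle.} The delicate point is Step 2: verifying the inequality chain, including $\log$ monotonicity and $C_{-\lambda}+\sqrt2-\lambda>0$. A second subtlety is checking that Lemma \ref{231205lem3.4} applies even though $\bar f_2$ is only defined on an open interval ending at $B$. For the latter, I would extend $\bar f_2$ continuously to $B$, or rerun that lemma's integration argument on $[B-\sqrt2,B)$, which only uses the limit value $\bar f_2(B^-)>0$.
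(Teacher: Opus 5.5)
Your proposal is correct and follows the paper's proof. Lemma \ref{231203lem4.1} together with the hypothesis on $b$ gives $\bar r_b(\bar s_1)>C_{-\lambda}+\sqrt2>\sqrt{n-1}+\sqrt2$, Corollary \ref{240113cor3.3} puts the left endpoint of $\bar f_2$ below $C_{-\lambda}$, and Lemma \ref{231205lem3.4} (using $b>0$) produces the critical point of $\bar f_2$. Your appeal to Corollary \ref{0528cor3.1} for uniqueness of that critical point (hence type 1) and your explicit verification of the inequality chain spell out steps the paper leaves implicit.
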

\begin{proof}
    We first note that $\bar{r}_b(\bar{s}_1)< \infty$ and $\bar{f}'_{2,b}(r)<0$ for $r$ close to $\bar{r}_b(\bar{s}_1)$.
	By the fact that $\lambda<0$ and corollary \ref{240113cor3.3}, we have $\sqrt{n-1}<C_{-\lambda}$ and $\bar{r}_b(\bar{s}_{2})<C_{-\lambda}$.
	It follows that both $\bar{r}_b(\bar{s}_1)-\sqrt{n-1}$ and $\bar{r}_b(\bar{s}_1)-\bar{r}_b(\bar{s}_2)$ are greater than $\sqrt{2}$ by lemma \ref{231203lem4.1} and the assumption on $b$.
	From $b>0$, we also have $\bar{f}_{2,b}(\bar{r}_b(s_1)) = \bar{f}_{1,b}(\bar{r}_b(s_1)) >0$.
	Now we can apply lemma \ref{231205lem3.4} to obtain that there exists a point $c_0$ in $[\bar{r}_b(\bar{s}_1)-\sqrt{2}, \bar{r}_b(\bar{s}_1))$ such that $\bar{f}_{2,b}'(c_0) = 0$. This completes the proof.
\end{proof}
Lemma \ref{231207lem4.2} guarantees the existence of class \(\mathcal{C}_2(3,1)\) curves for initial heights $b$ sufficiently close to $-\lambda$. By employing a continuity argument, the next lemma extends this geometric classification to a much wider range of initial heights.
\begin{lem}\label{240111lem4.3}
	Let $\lambda<0$ and $ -R_\lambda < b < -\lambda$. Then $\overline{\gamma}_b$ belongs to $\mathcal{C}_2(3,1)$, in particular, $\overline{\gamma}_0$ belongs to $\mathcal{C}_2(3,1)$.
\end{lem}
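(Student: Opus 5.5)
We prove Lemma \ref{240111lem4.3} by a continuity argument in $b$ on the interval $I=(-R_\lambda,-\lambda)$. Let $J\subset I$ be the set of $b$ for which $\overline{\gamma}_b$ belongs to $\mathcal{C}_2(3,1)$. By Lemma \ref{231207lem4.2}, $J$ contains an interval $(b_*, -\lambda)$ with $b_*=\max\{0,-\lambda-\frac{1}{2\sqrt{\pi}}e^{-(C_{-\lambda}+\sqrt2-\lambda)^2}\}$, so $J\neq\emptyset$. Since $I$ is connected, it suffices to show that $J$ is open and relatively closed in $I$.

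\emph{Openness.} Fix $b_0\in J$. Because $\bar f_{2,b_0}$ is of type 1, Corollary \ref{240114cor3.10} gives $\bar s_2(b_0)<\bar S(b_0)$. Lemma \ref{231214lem2} then applies with $k=2$ and shows that $\overline{\gamma}_b$ belongs to $\mathcal{C}_2(3,1)$ for all $b$ near $b_0$. (For $b<-\lambda$ we always have $\bar f_1'>0$ and $\bar f_1''>0$, so $\bar f_1$ is of type 3, and Proposition \ref{231211lem3.3} gives $\bar s_1<\bar S$.)

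\emph{Closedness.} Let $b_j\in J$ with $b_j\to b_0\in I$. As above, $\overline{\gamma}_{b_0}$ has first piece of type 3 with $\bar s_1(b_0)<\bar S(b_0)$ and $\bar r_{b_0}(\bar s_1)<\infty$. By Propositions \ref{231204lem3.3} and \ref{231206lem3.5}, the second piece $\bar f_{2,b_0}$ exists and is of type 1, 2 or 3. If it were of type 2, or of type 3 with $\bar s_2<\bar S$, openness (Lemma \ref{231214lem2}) would force the $\overline{\gamma}_{b_j}$ into a different class, which is a contradiction. By Corollary \ref{240114cor3.10}, it remains to exclude type 3, i.e.\ $f_2'<0$, $f_2''>0$ near the top with $\bar s_2=\bar S$.

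This exclusion is where the hypothesis $b_0>-R_\lambda$ must be used, and it is the main obstacle. The plan is as follows. Since $\overline{\gamma}_{b_j}$ is of type $(3,1)$, each $\bar f_{2,b_j}$ has an interior critical point $c_j$ with $\bar f_{2,b_j}''<0$. By Corollary \ref{0528cor3.1}, on $(c_j,\bar r_{b_j}(\bar s_1))$ we have $f'<0$ and $f''<0$, and at the critical point $f''<0$ gives $f(c_j)>\lambda$ from \eqref{231212eq2.9}. Suppose the type-3 limit occurred. Then continuous dependence would force $c_j\to \bar r_{b_0}(\bar s_1)$, so the critical points would escape to the turning point, where $f'\to-\infty$. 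Evaluating \eqref{231212eq2.9} at $c_j$ and passing to the limit should yield $\bar x_{b_0}(\bar s_1)\ge\lambda$.

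To contradict this, compare $\overline{\gamma}_{b_0}$ with the round solution of radius $R_\lambda$ generated by $\overline{\gamma}_{-R_\lambda}$, or with the constant solution $f=\lambda$ of \eqref{231212eq2.9}, and show that for $b_0>-R_\lambda$ the first arc turns back at $x<\lambda$ ... . I expect this comparison step, controlling where the first arc turns relative to $\lambda$, to be the delicate point; I would first try a maximum-principle comparison between $\bar f_{1,b_0}$ and the circle $f=-\sqrt{R_\lambda^2-r^2}$ using \eqref{0526eq2.8}.

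Once type 3 is excluded, $b_0\in J$, so $J=I$. Since $R_\lambda>0>\lambda$, the point $0$ lies in $I$, and the particular case $\overline{\gamma}_0\in\mathcal{C}_2(3,1)$ follows.
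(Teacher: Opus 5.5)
Your openness step and your reduction of closedness to excluding $\overline{\gamma}_{b_0}\in\mathcal{C}_2(3,3)$ are fine. They match the paper, which runs the same argument via $b_0=\inf\{\tilde b:\ (\tilde b,-\lambda)\subset J\}$ using Lemma~\ref{231214lem2} and Corollary~\ref{240114cor3.10}. However, the step that carries the content of the lemma, ruling out $\mathcal{C}_2(3,3)$ for $b_0>-R_\lambda$, is not done, and the route you sketch would not work:
\begin{enumerate}
\item In that case $\bar f_{2,b_0}'<0$ and $\bar f_{2,b_0}''<0$ (as for the semicircle $\overline{\gamma}_{-R_\lambda}$), not $\bar f_{2,b_0}''>0$.
\item The critical points $c_j$ of $\bar f_{2,b_j}$ cannot approach the turning point. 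Near $\bar s_1(b_0)$ one has $\bar\theta_{b_0}\approx 0$, and by continuous dependence $\bar\theta_{b_j}\approx 0$ there too, whereas $\bar f_{2,b_j}'(c_j)=0$ means $\bar\theta_{b_j}=-\pi/2$. Since $\bar\theta_{b_0}\in(-\pi/2,0)$ on compact subsets of $(\bar s_1(b_0),\bar S(b_0))$, the $c_j$ in fact escape to the axis, $c_j\to 0$.
\item The contradiction you aim for, $\bar x_{b_0}(\bar s_1)<\lambda$, is false whenever $b_0\ge\lambda$, because $\bar f_{1,b_0}$ increases from $\bar f_{1,b_0}(0)=b_0$. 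When $\lambda^2\ge n/2$ one has $-R_\lambda\ge\lambda$, so it fails for every $b_0$ in the interval. No maximum-principle comparison with the circle or with $f\equiv\lambda$ can produce it.
\end{enumerate}

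The missing idea is a global rigidity argument. Suppose $\bar f_{2,b_0}$ is of type 3 with $\bar s_2=\bar S$. Then $\bar f_{2,b_0}'\bar f_{2,b_0}''>0$, so Proposition~\ref{231206lem3.5} shows that it extends to $r=0$ with $\bar f_{2,b_0}'(0)=0$. Hence $\overline{\gamma}_{b_0}$ is a convex arc meeting the axis orthogonally at both ends, and it generates a compact, embedded, convex hypersurface. The normal $(-\dot r,\dot x\,\alpha)$ points outward at the starting point $(b_0,0)$. So for the reversed curve $\overline{\gamma}_{b_0}(-s)$, i.e.\ with the inward normal, the hypersurface is a $(-\lambda)$-hypersurface with $-\lambda>0$. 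Heilman's theorem \cite{Hei} then forces it to be the round sphere about the origin, whose radius is $R_\lambda$. Therefore $b_0=-R_\lambda$, contradicting $b_0\in(-R_\lambda,-\lambda)$. This is exactly where the hypothesis $b>-R_\lambda$ enters, and the semicircle $\overline{\gamma}_{-R_\lambda}\in\mathcal{C}_2(3,3)$ shows the bound is sharp. This is the argument the paper uses.
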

\begin{proof}
	Consider the set \[ 	\left\lbrace \tilde{b} \in (-\infty, -\lambda): \forall b \in (\tilde{b} ,-\lambda),\ \overline{\gamma}_b\text{ belongs to }\mathcal{C}_2(3,1) \right\rbrace, \] which is non-empty by lemma \ref{231207lem4.2}.
	Let $b_0$ be the infimum of this set: \[ b_0 = \inf \left\lbrace \tilde{b} \in (-\infty, -\lambda): \forall b \in (\tilde{b} ,-\lambda),\ \overline{\gamma}_b\text{ belongs to }\mathcal{C}_2(3,1) \right\rbrace. \]
	We will prove that $ b_0 = -R_{\lambda}. $
	First, we know that equation (\ref{0316eq2.5}) has a solution \[(-R_{\lambda}\cos\frac{s}{R_{\lambda}}, R_{\lambda}\sin\frac{s}{R_{\lambda}}, \frac{\pi}{2} -\frac{s}{R_{\lambda}} ),\ \  0<s<\pi R_\lambda,\] thus $\overline{\gamma}_{-R_\lambda}$ belongs to $\mathcal{C}_2(3,3)$.
	This yields that $b_0 \ge -R_\lambda.$
	Second, if $b_0 > -R_\lambda$, then by the definition of $b_0$, lemma \ref{231214lem2} and corollary \ref{240114cor3.10}, $\overline{\gamma}_{b_0}$ belongs to $\mathcal{C}_2(3,3)$ and therefore, using proposition \ref{231206lem3.5}, $\bar{\gamma}_{b_0}(-s)$ generates a convex, compact $(-\lambda)$-hypersurface.
	But Heilman's result \cite{Hei} tells us that this must be the sphere of radius $R_\lambda$,  which implies $b_0 = -R_\lambda$, contradicting the assumption that \(b_0>-R_\lambda\).
	This completes the proof of this lemma.
\end{proof}

According to the examples constructed in \cite{CLW}, the condition $\lambda<0$ in the above lemma is crucial.
In figure \ref{0426fig4.1}, we see that the red curves belong to \(\mathcal{C}_2(3,1)\).
\begin{figure}[htbp]
	\centering
	\includegraphics[scale=0.4]{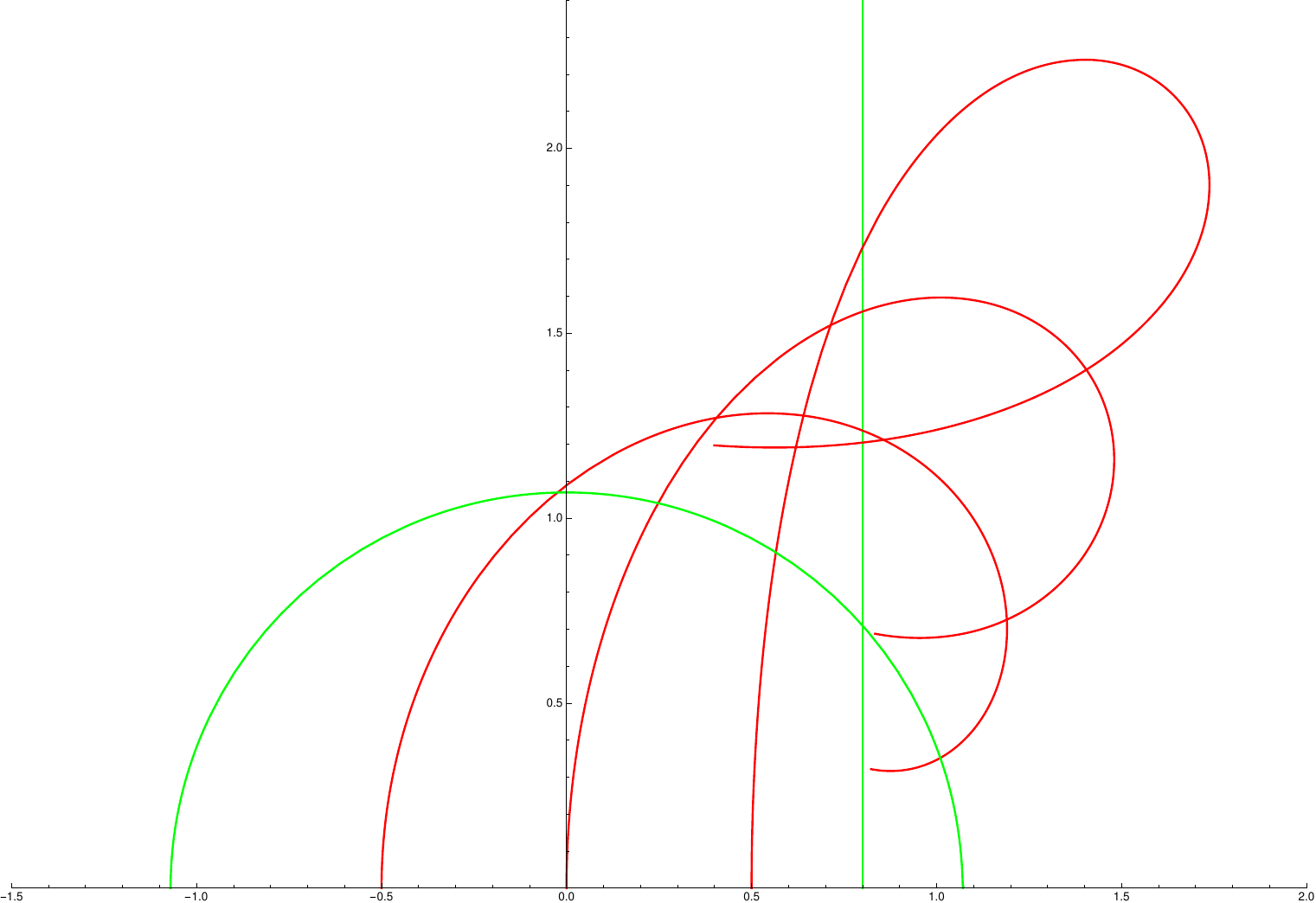}
	\caption{\ \ Graphs of some  $\overline{\gamma}_b$ with $ -R_\lambda < b < -\lambda$}
	\label{0426fig4.1}
\end{figure}

\section{Behavior of the curve $\gamma_\delta$}

Having analyzed the curves $\bar{\gamma}_b$ starting from the $x$-axis, we turn our attention to the one-parameter family of curves $\gamma_\delta$ emanating from the \(r\)-axis.
We begin with a lemma which can be found in \cite{LCW}.
\begin{lem}\label{lem5.1}
	For any $m \in \mathbb{N}$, there is a $T_m > 0$ and a $\delta_m > 0$ such that for all $0< \delta < \delta_m$, one has $T_m\,\delta < s_{*}(\delta)$, while, at $s = T_m\,\delta$, one will have $\arctan m < \theta_\delta < \pi/2 $, $x_\delta = O(\delta)$ and $r_\delta = \delta + O(\delta)$.
\end{lem}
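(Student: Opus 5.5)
We prove Lemma \ref{lem5.1} by rescaling the system (\ref{0316eq2.5}) near the singular point $(0,\delta,0)$ and comparing with the limit system. The comparison system is a cylinder of radius $\delta$ viewed at unit scale. Set $s=\delta\tau$, $x_\delta(s)=\delta X(\tau)$, $r_\delta(s)=\delta\rho(\tau)$, $\theta_\delta(s)=\Theta(\tau)$. Then (\ref{0316eq2.5}) becomes
\[
X'=\cos\Theta,\qquad \rho'=\sin\Theta,\qquad \Theta'=\frac{n-1}{\rho}\cos\Theta+\delta\lambda-\delta^2\rho\cos\Theta+\delta^2X\sin\Theta,
\]
with initial data $(X,\rho,\Theta)(0)=(0,1,0)$. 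As $\delta\to0$ this is a regular perturbation of the limit system
\[
X'=\cos\Theta,\qquad \rho'=\sin\Theta,\qquad \Theta'=\frac{n-1}{\rho}\cos\Theta,
\]
with the same data. On any compact $\tau$-interval where the limit solution keeps $\rho$ bounded below by a positive constant, continuous dependence on parameters gives uniform $C^1$ closeness of order $O(\delta)$.

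The next step is to analyze the limit system. One has $\frac{d}{d\tau}(\rho^{n-1}\sin\Theta)=(n-1)\rho^{n-2}\sin\Theta\cos\Theta+\rho^{n-1}\cos\Theta\,\Theta'\cdot$(wait) — more precisely, $\frac{d}{d\tau}(\rho^{n-1}\sin\Theta)=\rho^{n-2}\cos\Theta\,[(n-1)\sin\Theta+\rho\Theta']$. The cleaner route is to write the curve as a graph $X=g(\rho)$ while $\Theta\in(0,\pi/2)$. Then $\Theta$ satisfies $\frac{d\Theta}{d\rho}=\frac{(n-1)\cot\Theta}{\rho}$, which gives $\frac{d}{d\rho}\log\cos\Theta=-\frac{(n-1)}{\rho}$, so $\cos\Theta=\rho^{-(n-1)}$; this is the catenoid-type profile. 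Initially $\Theta'(0)=n-1>0$, so $\Theta$ increases from $0$ and $\rho$ increases. Moreover $\Theta\to\pi/2$ as $\rho\to\infty$, with $\Theta<\pi/2$ for all finite $\rho$, and $\Theta'>0$ throughout.

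Given $m$, choose $\rho_m$ with $\tan\Theta>m+1$ at $\rho=\rho_m$, i.e. $\rho_m^{-(n-1)}<\cos(\arctan(m+1))$, and let $T_m$ be the corresponding $\tau$, which is finite. On $[0,T_m]$ the limit solution has $\rho\ge1$, $|X|\le T_m$, $\Theta\in[0,\Theta(T_m)]\subset[0,\pi/2)$, and $\Theta'\ge c_m>0$ on $(0,T_m]$; near $\tau=0$ one has $\Theta'\approx n-1$.

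By continuous dependence, for $\delta<\delta_m$ the perturbed solution stays $O(\delta)$-close in $C^1$ on $[0,T_m]$. This yields the following at $\tau=T_m$:
\begin{enumerate}[1.]
\item $\arctan m<\Theta<\pi/2$, using the margin $m+1$ versus $m$ and $\Theta(T_m)<\pi/2$ strictly.
\item $x_\delta=\delta X=O(\delta)$.
\item $r_\delta=\delta\rho=O(\delta)$, which is the claimed $r_\delta=\delta+O(\delta)$ form.
\end{enumerate}

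Finally, we need $\dot\theta_\delta\ne0$ on $(0,T_m\delta]$, which gives $s_*(\delta)>T_m\delta$. Note that $\dot\theta_\delta=\delta^{-1}\Theta'$, and $\Theta'$ is uniformly $O(\delta)$-close to the limit $\Theta'$. The limit $\Theta'$ is bounded below by a positive constant on $[0,T_m]$, including at $\tau=0$ where it equals $n-1$. Hence the perturbed $\Theta'$ stays positive for $\delta$ small.

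The main obstacle is making the uniform estimate honest on the whole interval. The perturbation terms $\delta\lambda$, $\delta^2\rho\cos\Theta$ and $\delta^2X\sin\Theta$ are bounded on a compact region away from $\rho=0$. So a standard Gronwall argument on the region $\{\rho\ge1/2,\ |X|\le T_m+1\}$ suffices, and the a priori containment in that region follows by a bootstrap for $\delta$ small.
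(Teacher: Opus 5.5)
Your proposal is correct. The paper gives no proof of this lemma itself; it quotes it from \cite{LCW}. Your argument is the standard one for this statement (cf.\ Drugan \cite{D}): rescale $\Gamma_\delta$ by $\delta$, compare on a compact $\tau$-interval with the catenoid profile $\rho^{n-1}\cos\Theta=1$, and use the limit's uniform bound $\Theta'\ge (n-1)\rho_m^{-n}>0$ to get $s_*(\delta)>T_m\delta$.

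The only blemish is the abandoned formula for $\frac{d}{d\tau}(\rho^{n-1}\sin\Theta)$, which is wrong and should be deleted. Replace it with the identity $\frac{d}{d\tau}(\rho^{n-1}\cos\Theta)=0$ for the limit system. That identity also fixes the constant in $\cos\Theta=\rho^{-(n-1)}$ directly, without going through the graph parametrization, which degenerates at $\tau=0$.
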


Then the authors provide a remark and a proposition in the same paper \cite{LCW}.
\begin{rmk}\label{0529rmk3.1}
	For any fixed $m \in \mathbb{N}$, one can choose sufficiently small $\delta_m >0$ such that $x_\delta(T_m\,\delta) = O(\delta) <\frac{1}{m}$ and $r_\delta(T_m\,\delta) = \delta + O(\delta) <1$ for $0<\delta < \delta_m$.	
	Consider the tangent line $l_{m,\delta} : r - r_\delta(T_m\,\delta) = \tan \theta_\delta(T_m\,\delta)(x - x_\delta(T_m\,\delta))$ to $\gamma_\delta$ at $s = T_m\,\delta$. Setting $r = 1$ in the equation of $l_{m,\delta}$, we get $$ x = \frac{1}{\tan \theta_\delta(T_m\,\delta)}(1 - O(\delta) - \delta) + O(\delta) < \frac{2}{m} $$ for $0<\delta < \delta_m$.
	Therefore, if $0< \delta < \delta_m$ and the profile curve $\gamma_\delta(s) = (x_\delta, r_\delta),\ s\in (0, b]$ satisfies $\dot{r}_\delta > 0$, $r_\delta \le 1$ and $\dot{\theta}_\delta > 0$ on $(0, b]$, one will have $x_\delta \le \frac{2}{m} $ on $(0, b]$.
\end{rmk}

Henceforth we choose $\delta_m$ as in the above remark.
\begin{prop}\label{0529lem3.7}
	Let $\lambda < 0$. Then there is an $\hat{\delta} > 0$ such that for $0< \delta < \hat{\delta}$, $s_{*}(\delta) < s_1(\delta) $. In other words, the curve \(\gamma_\delta\) belongs to \(\mathcal{C}_1(2)\).
\end{prop}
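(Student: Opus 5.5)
We argue by contradiction, following the scheme of \cite{LCW}. Suppose there are arbitrarily small $\delta>0$ with $s_1(\delta)\le s_*(\delta)$. By lemma \ref{lem5.1}, for any fixed $m$ and all $0<\delta<\delta_m$ we have $T_m\delta<s_*(\delta)$, and at $s=T_m\delta$ the curve already points almost vertically: $\arctan m<\theta_\delta<\pi/2$, with $x_\delta=O(\delta)$ and $r_\delta=\delta+O(\delta)$. On $(0,s_*)$ we have $\dot\theta_\delta>0$. So if $s_1\le s_*$, then $\theta_\delta$ increases from $\arctan m$ and must reach $\pi/2$ before $s_*$. Since $\dot r_\delta=\sin\theta_\delta$ vanishes first at $\theta_\delta=\pi$, in fact $\theta_\delta$ must pass $\pi/2$ and continue to $\pi$ on $[T_m\delta,s_1]$ with $\dot\theta_\delta>0$ throughout.

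The key step is to show this cannot happen while $r_\delta$ stays small. First we show that, as long as $r_\delta\le 1$ and $\dot\theta_\delta>0$, remark \ref{0529rmk3.1} confines the curve to $x_\delta\le 2/m$. Next we evaluate $\dot\theta=(\frac{n-1}{r}-r)\cos\theta+x\sin\theta+\lambda$ when $\theta=\pi/2$. There $\dot\theta=x_\delta+\lambda\le 2/m+\lambda$, which is negative once $m>-2/\lambda$ (here $\lambda<0$ is used). So $\dot\theta_\delta$ would vanish before $\theta_\delta$ reaches $\pi/2$, giving $s_*<s_1$, provided the curve reaches the direction $\theta=\pi/2$ while $r_\delta\le1$.

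It remains to rule out the alternative that $r_\delta$ exceeds $1$ while $\theta_\delta\in(\arctan m,\pi/2)$ and $\dot\theta_\delta>0$. In that regime $f_1$, the graph over the $r$-axis, satisfies $0<f_1'<1/m$ and $f_1''<0$, since $\theta$ increases toward $\pi/2$ and hence $f_1'=\cot\theta$ decreases. We would use \eqref{0526eq2.8} to control this. With $f_1'$ small and $f_1=x_\delta$ bounded by $2/m$ up to $r=1$, the right-hand side is approximately $-f_1-\lambda\approx-\lambda>0$ once $(n-1)f_1'/r$ is small. That would force $f_1''>0$, contradicting $f_1''<0$ (which is equivalent to $\dot\theta>0$).

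The main obstacle is controlling the term $\frac{n-1}{r}f_1'$ near $r\approx\delta$. For this we take $m$ large and use the smallness $f_1'\le 1/m$ together with $r\ge r_\delta(T_m\delta)\sim\delta$, and possibly shrink $\hat\delta$. If this estimate fails directly, we would instead integrate $\frac{d}{dr}(r^{n-1}e^{-r^2/2}f_1')$, which equals $r^{n-1}e^{-r^2/2}(1+f_1'^2)(-f_1-\lambda\sqrt{1+f_1'^2})$ plus a positive correction. Its integrand is positive when $f_1<-\lambda$, so $f_1'$ would increase, again contradicting $f_1''<0$. Either way $\dot\theta_\delta$ vanishes before $\sin\theta_\delta$ does. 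Hence $s_*(\delta)<s_1(\delta)$ for all $\delta<\hat\delta$, and corollary \ref{0528cor3.1} gives that $f_1$ is of type 2, i.e.\ $\gamma_\delta\in\mathcal{C}_1(2)$.
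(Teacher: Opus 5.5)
Your overall strategy is the natural one, and the case where $\theta_\delta$ reaches $\pi/2$ while $r_\delta\le 1$ is handled correctly: there $\dot\theta_\delta=x_\delta+\lambda\le 2/m+\lambda<0$. (The paper gives no proof of its own here and cites \cite{LCW}, but lemma \ref{lem5.1} and remark \ref{0529rmk3.1} are set up for exactly this argument.)

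The gap is in the remaining case, where $r_\delta$ reaches $1$ with $\theta_\delta\in(\arctan m,\pi/2)$. You locate the difficulty near $r\approx\delta$, but no contradiction can be obtained there. Just after $T_m\delta$ one really has $\dot\theta_\delta>0$, i.e.\ $f_1''<0$. Your proposed estimate fails in any case. The bound $f_1'\le 1/m$ only gives $\frac{n-1}{r}f_1'\le\frac{n-1}{mr}$, which is of order $\frac{n-1}{m\delta}$ and blows up as $\delta\to0$ for fixed $m$; shrinking $\hat\delta$ makes this worse. The fallback is also incorrect:
\[
\frac{d}{dr}\bigl(r^{n-1}e^{-r^2/2}f_1'\bigr)=r^{n-1}e^{-r^2/2}\Bigl[\bigl(1+(f_1')^2\bigr)\bigl(-f_1-\lambda\sqrt{1+(f_1')^2}\bigr)+\Bigl(r-\frac{n-1}{r}\Bigr)(f_1')^3\Bigr].
\]
The extra term is negative, not positive, for $r<\sqrt{n-1}$, in particular for all $r\le 1$. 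Even if $r^{n-1}e^{-r^2/2}f_1'$ increased, that would not force $f_1'$ to increase, because the weight itself increases on $(0,\sqrt{n-1})$.

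The missing observation is that you need a contradiction at only one point, and the right point is the first $s'$ with $r_\delta(s')=1$. There $s'<s_1\le s_*$, so $\dot\theta_\delta(s')>0$. On the other hand, $\cos\theta_\delta(s')<1/m$, and remark \ref{0529rmk3.1} gives $x_\delta(s')\le 2/m$. Hence
\[
\dot\theta_\delta(s')=(n-2)\cos\theta_\delta+x_\delta\sin\theta_\delta+\lambda<\frac{n}{m}+\lambda<0
\]
once $m>-n/\lambda$. Equivalently, the right-hand side of \eqref{0526eq2.8} at $r=1$ is at least $-\lambda-n/m>0$. Fixing $m>-n/\lambda$ and taking $\hat\delta=\delta_m$ closes both cases.

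Also, your first paragraph tacitly assumes $s_1<S$. The case $s_1=S$ with $\theta_\delta<\pi/2$ throughout does not have $\theta_\delta$ reaching $\pi$. It is absorbed into the $r_\delta\ge 1$ case, because $f_1'f_1''<0$ forces $r_\delta\to\infty$ by proposition \ref{240112lem3.3}. The final step, that $s_*<s_1$ gives type 2 via corollary \ref{0528cor3.1}, is fine.
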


This proposition and the following proposition \ref{240112prop5.2} characterize the behavior of the curve \(\gamma_\delta\) when \(\delta\) is sufficiently small. Furthermore, by applying proposition \ref{231211lem3.3}, we also deduce from this result that \(s_1(\delta) < S(\delta)\) whenever \(\lambda < 0\) and \(\delta\) is sufficiently small.

To prove proposition \ref{240112prop5.2}, we give a lemma.
\begin{lem}\label{lem5.2}
	Let $\lambda < 0$. Then we have $\lim_{\delta \to 0}r_\delta(s_{*}(\delta)) =0$, $\lim_{\delta \to 0}x_\delta(s_{*}(\delta))=0$ and $\lim_{\delta \to 0}\theta_\delta(s_{*}(\delta))=\pi/2$.
\end{lem}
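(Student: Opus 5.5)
We argue by contradiction along a sequence. Suppose the statement fails; then there are $\eta>0$ and a sequence $\delta_j\to 0$ such that the point $\Gamma_{\delta_j}(s_*(\delta_j))$ stays at distance at least $\eta$ from $(0,0,\pi/2)$. Lemma \ref{lem5.1} and remark \ref{0529rmk3.1} give the starting information. Fix $m$. For $\delta<\delta_m$ we have $T_m\delta<s_*(\delta)$, and at $s=T_m\delta$ the curve already satisfies $\theta_\delta\in(\arctan m,\pi/2)$, $x_\delta=O(\delta)$ and $r_\delta=O(\delta)$. On $(0,s_*)$ we have $\dot\theta_\delta>0$. As long as $\theta_\delta<\pi/2$ this keeps $\dot r_\delta>0$, and on that range the remark gives $x_\delta\le 2/m$ while $r_\delta\le 1$. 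The plan is to show three things, each for $\delta$ small: $\theta_\delta(s_*)<\pi/2$, $r_\delta(s_*)\to 0$, and $x_\delta(s_*)\to 0$.

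\textbf{Step 1: $\theta$ cannot reach $\pi/2$ with $r$ small.} Suppose $\theta_\delta$ reaches $\pi/2$ before $s_*$, at a point where $r\le 1$ and $|x|\le 2/m$. There $\dot\theta=x+\lambda$, which is negative once $m$ is large, because $\lambda<0$. But $\dot\theta>0$ on $(0,s_*)$, so this is impossible. Hence, while $r_\delta\le1$, we keep $\theta_\delta\in(\arctan m,\pi/2)$, $\dot r_\delta>0$, and $|x_\delta|\le 2/m$. In this regime $\dot\theta=(\frac{n-1}{r}-r)\cos\theta+x\sin\theta+\lambda$. The last two terms sum to at most $2/m+\lambda$, which is less than $\lambda/2<0$. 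So $\dot\theta>0$ forces
\[
\frac{n-1}{r}\cos\theta>\frac{|\lambda|}{2},
\]
that is, $r<\frac{2(n-1)}{|\lambda|}\cos\theta<\frac{2(n-1)}{|\lambda|\sqrt{1+m^2}}$.

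\textbf{Step 2: $s_*$ occurs while $r$ is still small, and the limits follow.} Step 1 shows that $\dot\theta$ must vanish before $r_\delta$ exceeds $\rho_m:=\frac{2(n-1)}{|\lambda|\sqrt{1+m^2}}$, provided $\rho_m<1$. Otherwise $\dot\theta>0$ would persist up to a point with $r_\delta>\rho_m$, contradicting the inequality just derived. Therefore $r_\delta(s_*)\le\rho_m$, $|x_\delta(s_*)|\le 2/m$, and $\theta_\delta(s_*)\in[\arctan m,\pi/2)$. We also need $\theta$ to be increasing on $(T_m\delta,s_*)$, so that $\theta(s_*)\ge\theta(T_m\delta)>\arctan m$; this holds by the definition of $s_*$. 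Since $m$ is arbitrary, for every $m$ these bounds hold once $\delta<\delta_m$. This yields all three limits and contradicts the sequence assumption; in fact the argument is direct and needs no contradiction at all.

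\textbf{Main obstacle.} The delicate point is justifying that $s_*(\delta)<S(\delta)$ and $s_*<s_1$ in this regime. That is, we must rule out the curve escaping, or $\theta$ hitting $\pi/2$, before $\dot\theta$ vanishes. Step 1 handles the latter. The former holds because the curve has bounded speed and $r$ stays in $(0,1]$ along this part. We also need $r$ bounded below away from $0$, to avoid the singularity. This follows since $r$ is increasing from $r_\delta(T_m\delta)>0$. The remaining bookkeeping is to choose $m$ so large that $2/m<|\lambda|/2$ and $\rho_m<1$.
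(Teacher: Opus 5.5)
Your argument is correct, but it takes a different route from the paper.

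The paper assumes $\delta<\hat\delta$, so that proposition \ref{0529lem3.7} ($s_*<s_1$) is available. It then treats each limit separately, by contradiction along a sequence:
\begin{itemize}
\item For $r_\delta(s_*)$ it passes to the graphs $f_{1,\bar\delta_m}$ over the $r$-axis. This step is only sketched there.
\item For $x_\delta(s_*)$ it writes the arc as a graph $u_m$ over the $x$-axis and shows $u_m\to0$ locally uniformly. It then extracts points where $u_m,u_m',u_m''\to0$, which is incompatible with the singular term $\frac{n-1}{u}$ in \eqref{0528eq2.7}.
\item The $\theta$-limit is essentially read off from lemma \ref{lem5.1}, together with the monotonicity of $\theta$.
\end{itemize}

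You instead run a single first-exit argument on $[T_m\delta,s_*]$, using only lemma \ref{lem5.1}, remark \ref{0529rmk3.1} and the $\dot\theta$-equation. The sign $\lambda<0$ stops $\theta$ from reaching $\pi/2$ while $x\le 2/m$. The condition $\dot\theta\ge0$, together with $\cos\theta<(1+m^2)^{-1/2}$, forces $r<\rho_m<1$. That in turn keeps you in the regime where the remark applies. This is more elementary, and it gives more than the lemma asks for:
\begin{itemize}
\item $\theta<\pi/2$ on $[0,s_*]$ and $s_*<S$, so you reprove proposition \ref{0529lem3.7} rather than invoke it;
\item the explicit bounds $0<x_\delta(s_*)\le 2/m$, $r_\delta(s_*)<\rho_m$ and $\theta_\delta(s_*)\in(\arctan m,\pi/2)$ for $\delta<\delta_m$.
\end{itemize}

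When you write it up, tighten two points.
\begin{itemize}
\item Phrase Step 2 via the first time $r$ reaches $\rho_m$. At that time all your bounds still hold by continuity, and they give the strict inequality $r<\rho_m$.
\item Ruling out $s_*=S$ needs $\dot r=\sin\theta>m/\sqrt{1+m^2}$ on $[T_m\delta,s_*)$, not just $\dot r>0$. With that lower bound, $r$ would exceed $\rho_m$ within finite arc length. Meanwhile the trajectory stays in a compact subset of $\mathbb{H}\times\mathbb{R}$, so it cannot terminate earlier.
\end{itemize}
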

\begin{proof}
	We argue by contradiction as in \cite{D}.
	Without loss of generality, assume $0< \delta < \hat{\delta}$.
	Suppose $\lim_{\delta \to 0}r_\delta(s_{*}(\delta)) \ne 0$. Then there is an $\varepsilon_1>0$ such that for all $m \in \mathbb{N}$, there exists $0< \bar{\delta}_m < \min\left\lbrace \frac{1}{m}, \delta_m\right\rbrace $ such that $r_{\bar{\delta}_m}(s_{*}(\bar{\delta}_m))\ge \varepsilon_1$.
	Putting $f_m(r) = f_{1,{\bar{\delta}_m}}(r)$, we have $f_m(r)>0$ and $f''_m(r)>0$ for $r$ in $(\bar{\delta}_m, \varepsilon_1)$.
	These will lead to a contradiction, hence $\lim_{\delta \to 0}r_\delta(s_{*}(\delta)) = 0$.

	Suppose $\lim_{\delta \to 0}x_\delta(s_{*}(\delta)) \ne 0$.
	Then there is an $\varepsilon_2>0$ such that for all $m \in \mathbb{N}$, there exists $0< \hat{\delta}_m < \min\left\lbrace \frac{1}{m}, \delta_m\right\rbrace $ such that $x_{\hat{\delta}_m}(s_{*}(\hat{\delta}_m))\ge \varepsilon_2$.
	The curve $\gamma_{\hat{\delta}_m}(s)$, $0<s<s_{*}(\hat{\delta}_m)$ can be written as a graph of a function over the $x$-axis, Denoting this function by $u_m(x)$ for $0<x<x_{\hat{\delta}_m}(s_{*})$, we see that $u_m$ satisfies (\ref{0528eq2.7}).
	Since $x_{\hat{\delta}_m}(s_{*})\ge \varepsilon_1$ and $u_m(x_{\hat{\delta}_m}(s_{*})) = r_{\hat{\delta}_m}(s_{*})$ approaches zero, we know that $u_m(x)$ converges to zero uniformly on compacta.
	Then one can find a sequence $\left\lbrace \xi_m \right\rbrace $ in a compact interval such that $$\lim_{m \to \infty}u_{m} (\xi_m) =0,\ \ \lim_{m \to \infty}u'_{m} (\xi_m) =0,\ \ \lim_{m \to \infty}u''_{m} (\xi_m) =0 ,$$ but these contradict (\ref{0528eq2.7}).
	Hence $\lim_{\delta \to 0}x_\delta(s_{*}(\delta))=0$.

	The last limit follows from lemma \ref{lem5.1}.
\end{proof}

\begin{prop}\label{240112prop5.2}
	Let $\lambda < 0$.	Then there exists an $\bar{\delta} > 0$ such that, for $0<\delta< \bar{\delta}$, the profile curve $\gamma_\delta$ belongs to $\mathcal{C}_2(2,1)$.
\end{prop}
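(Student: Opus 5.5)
For $\delta$ small we already know from Proposition \ref{0529lem3.7} that $\gamma_\delta\in\mathcal{C}_1(2)$. By Proposition \ref{231211lem3.3}, applied on the convex part of $f_1$ past the inflection point, we also know that $s_1(\delta)<S(\delta)$. At the top $r_\delta(s_1)=\lim r$ we have $f_1'\to+\infty$. The curve then turns, and $f_2$ is a maximally extended solution to \eqref{231212eq2.9} defined on $(r_\delta(s_2),r_\delta(s_1))$, with $f_2'<0$ near the right endpoint. The plan is to show two things: first, that $f_2$ has a critical point, so it is of type 1; second, that $s_2<S$, which then follows from Corollary \ref{240114cor3.9}. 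This mirrors the proof of Lemma \ref{231207lem4.2}. There, Lemma \ref{231205lem3.4} produces a point with $f_2'=0$ once we know three things: $r_\delta(s_1)\ge\sqrt{n-1}+\sqrt2$, $f_2(r_\delta(s_1))>0$, and $f_2$ is defined on $[r_\delta(s_1)-\sqrt2,r_\delta(s_1))$. By Corollary \ref{240113cor3.3}, $r_\delta(s_2)<C_{-\lambda}$, so the last condition follows from $r_\delta(s_1)\ge C_{-\lambda}+\sqrt2$.

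The key steps, in order, are as follows.

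First, by Lemma \ref{lem5.2}, as $\delta\to0$ the point $\gamma_\delta(s_*)$ converges to $(0,0)$ with angle converging to $\pi/2$. By Lemma \ref{lem2.2} (analytic dependence, together with smooth dependence away from $r=0$), the arc of $\gamma_\delta$ beyond $s_*$ should then converge, on compact pieces with $r\ge\eta>0$, to $\bar\gamma_0$. More precisely, I would compare with $\overline\Gamma_{x_\delta(s_*)}$ started near the point $(x_\delta(s_*),0,\pi/2)$ and use continuous dependence.

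Second, by Lemma \ref{240111lem4.3}, $\bar\gamma_0\in\mathcal{C}_2(3,1)$ with $\bar s_2(0)<\bar S(0)$. The first piece $\bar f_{1,0}$ is of type 3 with $f'>0,f''>0$, it reaches its top at a finite height, and the second piece has a strict interior critical point, after which it reaches its turning point. All of these are transversal features: $\ddot r\neq0$ where $\dot r=0$, and $f''\ne0$ at a critical point by Corollary \ref{0528cor3.1}. Hence closeness of $\gamma_\delta|_{[s_*+\text{small},\,\cdot]}$ to $\bar\gamma_0$ in $C^2$ transfers them. Concretely, for $\delta$ small, $\gamma_\delta$ has exactly one turning point near $\bar\gamma_0(\bar s_1)$, then a point with $\dot x=0$ and $\dot r<0$ (a critical point of $f_2$) near the corresponding point of $\bar\gamma_0$, and then a second turning point near $\bar\gamma_0(\bar s_2)$. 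This yields $s_2(\delta)<S(\delta)$ and $f_2$ of type 1, while $f_1$ remains of type 2 because $s_*<s_1$.

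The main obstacle is the first step: making the convergence rigorous near the singular axis $r=0$. The curve $\gamma_\delta$ starts at $(0,\delta)$, not on the axis, so continuous dependence must be obtained through a matching argument. I would first try to work with the graphs $f_{1,\delta}(r)$ over the $r$-axis for $r\in(r_\delta(s_*),\eta]$. There $f_{1,\delta}'\ge0$ and $f_{1,\delta}''>0$, and the quantity $f_{1,\delta}'$ is small by Lemma \ref{lem5.2} and Remark \ref{0529rmk3.1}. Using the first-order form of \eqref{231212eq2.9}, the goal is to show $f_{1,\delta}\to\bar f_{1,0}$ in $C^1([\eta/2,\eta])$, for instance by integrating the equation as $(r^{n-1}e^{-r^2/2}f'/\sqrt{1+f'^2})'=\dots$ and passing to the limit via a uniqueness argument based on Lemma \ref{lem2.2}. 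If this proves too delicate, the fallback is a direct argument. In that approach one bounds $r_\delta(s_1)$ from below by a Gaussian estimate as in Lemma \ref{231203lem4.1}, applied to $h=f_1+\lambda$ with $h$ small, and then applies Lemma \ref{231205lem3.4} to $f_2$ exactly as in Lemma \ref{231207lem4.2}. This requires $f_1(r_\delta(s_1))>0$, which follows since $f_1$ increases from $x_\delta(s_*)\approx0$.
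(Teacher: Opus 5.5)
Your main line is the paper's proof. You get $\gamma_\delta\in\mathcal{C}_1(2)$ from Proposition \ref{0529lem3.7}, show convergence of $\gamma_\delta$ to $\overline{\gamma}_0$ away from the axis via Lemma \ref{lem5.2}, and transfer the transversal features of $\overline{\gamma}_0\in\mathcal{C}_2(3,1)$ from Lemma \ref{240111lem4.3}.

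The paper handles the near-axis step with a one-line a priori bound plus ``continuous dependence''. The bound: on $\{r=c\}$ with $c<C_\lambda$, the conditions $\sin\theta_\delta,\cos\theta_\delta,x_\delta>0$ and $\dot\theta_\delta<0$ force $x_\delta\le C(c)$. Your compactness-plus-uniqueness route is a legitimate and more explicit way to do that step.

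One correction is needed there. Smallness of $f_{1,\delta}'$ on $(r_\delta(s_*),\eta]$ does not follow from Lemma \ref{lem5.2} and Remark \ref{0529rmk3.1}. Those stop at $s_*$, and after $s_*$ the condition $f_{1,\delta}''>0$ lets $f_{1,\delta}'$ grow. The smallness comes from the identity you mention. Along $(0,s_1)$,
$\frac{d}{dr}\bigl(r^{n-1}e^{-r^2/2}\cos\theta_\delta\bigr)=-r^{n-1}e^{-r^2/2}(x_\delta\sin\theta_\delta+\lambda)$.
Since $x_\delta\ge 0$ there, integrating from $r=\delta$ gives $\cos\theta_\delta(r)\le e^{r^2/2}\bigl[(\delta/r)^{n-1}+|\lambda|r/n\bigr]$. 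This yields uniform $C^1$ bounds on $[\eta/2,\eta]$. Any subsequential limit $f$ then has $0\le f'(r)=O(r)$ and $f(0^+)=0$, so the uniqueness clause of Lemma \ref{lem2.2} identifies it with $\bar f_{1,0}$. Also, the equation for $f_{1,\delta}$ is \eqref{0526eq2.8}, not \eqref{231212eq2.9}.

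Your fallback, however, would fail. Since $f_1$ starts near $x=0$, the function $h=f_1+\lambda\approx\lambda$ is not small. The Gaussian estimate of Lemma \ref{231203lem4.1} then gives at best $r_\delta(s_1)>\sqrt{\log\frac{1}{2\sqrt{\pi}\,|\lambda|}}+\lambda$. This reaches $C_{-\lambda}+\sqrt{2}$ only when $|\lambda|<\frac{1}{2\sqrt{\pi}}e^{-(C_{-\lambda}+\sqrt{2}-\lambda)^2}$, which is exactly the $b=0$ case of the threshold in Lemma \ref{231207lem4.2}. It certainly fails for $\lambda\le-4\sqrt{(n-1)/5}$, which is where the proposition is used in the main theorem. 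This is why the paper obtains $\overline{\gamma}_0\in\mathcal{C}_2(3,1)$ through the continuity argument and Heilman's rigidity theorem in Lemma \ref{240111lem4.3}. So the convergence step is essential and cannot be bypassed by a direct application of Lemma \ref{231205lem3.4}.
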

\begin{proof}
	Let us consider the intersections between the curve $\gamma_\delta$ and the line $\{r=c\}$ for a positive constant $c < C_\lambda$. Suppose that $\sin \theta_\delta>0$, $\cos \theta_\delta>0$, $x_\delta>0$, and $\dot \theta_\delta<0$ hold at one of these points. It then follows from equation \eqref{0316eq2.5} that, at such points, $x_\delta$ possesses an upper bound depending on $c$.
	Together with the continuous dependence on the initial data, lemma \ref{lem5.2} shows that $\gamma_\delta$ converges to $\overline{\gamma}_0$ as $\delta \to 0$ (see figure \ref{240116fig5.1}).
	Since the profile curve $\overline{\gamma}_0$ belongs to $\mathcal{C}_2(3,1)$, we know that the profile curve ${\gamma}_\delta$ belongs to $\mathcal{C}_2(2,1)$ for $\delta$ close to zero.
\end{proof}

In figure \ref{240116fig5.1}, we display graphs of some profile curves with $n=2$ and $\lambda=-1$.
The black curve is $\overline{\gamma}_0$, the green curve is $\gamma_{0.02}$ , the yellow curve is $\gamma_{0.014}$, the pink curve is $\gamma_{0.009}$ and the red curve is $\gamma_{0.004}$.
\begin{figure}[htbp]
	\centering
	\includegraphics[scale=0.6]{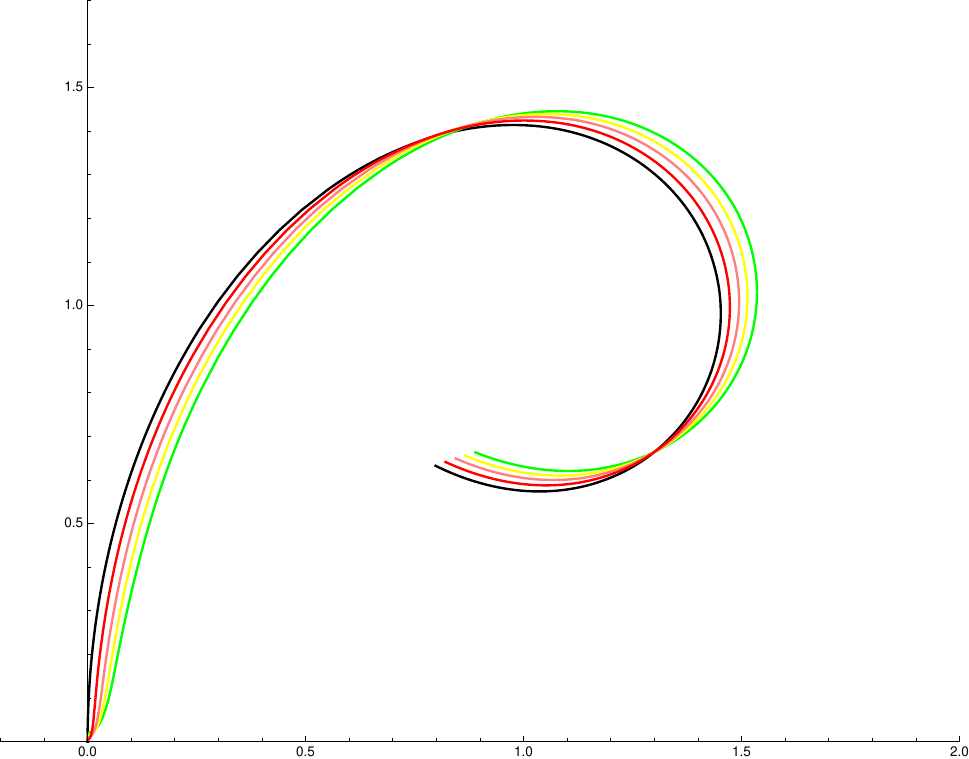}
	\caption{\ \ Graphs of some  ${\gamma}_\delta$ when $\delta$ is close to 0}
	\label{240116fig5.1}
\end{figure}

We also need to understand the behavior of the profile curves when the initial parameter $\delta$ approaches the critical upper bound $C_\lambda$. By analyzing the linearization around the constant solution, the following lemma establishes the oscillatory nature of the perturbed solution $u_\varepsilon(x)$, where $u_\varepsilon$ denotes the solution to equation (\ref{0528eq2.7}) with initial data $u_\varepsilon(0)=C_\lambda +\varepsilon$, $u_\varepsilon'(0)=0$.
\begin{lem}
	Given $n\ge 2$ and $\lambda\le-4\sqrt{\frac{n-1}{5}}$, there is an $\varepsilon_1>0$ such that $u_\varepsilon(x)=C_\lambda$ has at least three positive solutions for $|\varepsilon|<\varepsilon_1$.
\end{lem}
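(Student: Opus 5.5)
The plan is to linearize \eqref{0528eq2.7} about the constant solution $u\equiv C_\lambda$. Then I compare the linearized equation with a Hermite equation whose even polynomial solution has exactly three positive zeros. Finally I transfer the simple zeros back to $u_\varepsilon$ by $C^1$ convergence.

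\emph{Linearization.} Write $u_\varepsilon=C_\lambda+\varepsilon w_\varepsilon$. Since $\frac{d}{dp}\sqrt{1+p^2}=0$ at $p=0$, the formal derivative in $\varepsilon$ at $\varepsilon=0$ gives
\[
w''-xw'+\mu w=0,\qquad w(0)=1,\ w'(0)=0,\qquad \mu:=1+\frac{n-1}{C_\lambda^2}.
\]
The constant solution exists for all $x$, so smooth dependence on initial data applies on every compact interval $[0,L]$. For $|\varepsilon|$ small, $u_\varepsilon$ is defined on $[0,L]$, and $w_\varepsilon=(u_\varepsilon-C_\lambda)/\varepsilon\to w$ in $C^1([0,L])$ as $\varepsilon\to0$ (including $\varepsilon<0$).

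\emph{Using the hypothesis on $\lambda$.} Put $a=\sqrt{(n-1)/5}$. When $\lambda=-4a$ we have $\lambda^2+4(n-1)=36a^2$, so $C_\lambda=(-4a+6a)/2=a$. Since $C_\lambda$ is increasing in $\lambda$, the condition $\lambda\le -4a$ gives $C_\lambda\le a$. Hence $(n-1)/C_\lambda^2\ge 5$, i.e. $\mu\ge 6$.

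\emph{Three zeros of $w$.} For $\mu=6$, the even solution is (a multiple of) the Hermite polynomial $\mathrm{He}_6(x)=x^6-15x^4+45x^2-15$. It has exactly three positive simple zeros $z_1<z_2<z_3$. Both equations can be put in Sturm–Liouville form
\[
(e^{-x^2/2}w')'+\mu e^{-x^2/2}w=0 .
\]
The Sturm–Picone comparison applies on $(0,z_j)$, using that both solutions have vanishing derivative at $0$, so the boundary term at $0$ vanishes. It shows that for $\mu\ge6$ the even solution $w$ has a zero in $(0,z_1]$, another between any two consecutive zeros, and hence at least three zeros in $(0,z_3]$.

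All zeros of $w$ are simple: $w=w'=0$ at a point would force $w\equiv0$, contradicting $w(0)=1$. So $w$ changes sign at three points $0<\xi_1<\xi_2<\xi_3\le z_3$.

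\emph{Transfer to $u_\varepsilon$.} Take $L=z_3+1$. By the $C^1$ convergence $w_\varepsilon\to w$ and the sign changes of $w$, the intermediate value theorem gives, for $|\varepsilon|<\varepsilon_1$, three distinct zeros of $w_\varepsilon$ near $\xi_1,\xi_2,\xi_3$. Equivalently, these are three positive solutions of $u_\varepsilon(x)=C_\lambda$ (for $\varepsilon\neq0$).

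The main obstacle is the comparison step. I need to verify carefully that the Sturm–Picone argument works with the singular-free but degenerate-at-$0$ initial condition ($w'(0)=0$ rather than $w(0)=0$). I also need to ensure the borderline case $\mu=6$ is included; there $w$ equals $\mathrm{He}_6/\mathrm{He}_6(0)$ exactly, so it is fine. The uniform $C^1$ convergence of the difference quotient is routine from differentiable dependence on initial data, since the vector field is smooth near $u=C_\lambda>0$.
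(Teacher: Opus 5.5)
Your proposal is correct and follows the same route as the paper. You linearize \eqref{0528eq2.7} at $u\equiv C_\lambda$ to get $w''-xw'+\mu w=0$ with $\mu=1+\frac{n-1}{C_\lambda^2}\ge 6$, count the positive zeros of $w$, and use that these zeros are simple to transfer them to $u_\varepsilon$.

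The only real difference is how the zeros are counted. The paper simply asserts that ${}_1F_1(-\mu/2;1/2;x^2/2)$ has $\lceil \mu/2\rceil$ positive zeros. You instead get the needed lower bound of three zeros by Sturm--Picone comparison with $\mathrm{He}_6$, where the boundary term at $0$ vanishes because $w'(0)=\mathrm{He}_6'(0)=0$. This is self-contained, and you also make explicit the $C^1$ convergence of $(u_\varepsilon-C_\lambda)/\varepsilon$ and the intermediate-value transfer, which the paper leaves implicit.
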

\begin{proof}
	We study the linearization of the equation (\ref{0528eq2.7}) near the constant solution $u_0(x)= C_\lambda$.
	Define $w$ by \[ w(x) = \left. \dfrac{{\rm d}}{{\rm d}\varepsilon}\right|  _{\varepsilon=0}u_\varepsilon(x). \]
	Since $u_\varepsilon$ satisfies equation \[ \dfrac{u_\varepsilon''}{1+(u_\varepsilon')^2} = x\,u_\varepsilon' - u_\varepsilon + \dfrac{n-1}{u_\varepsilon} + \lambda\sqrt{1+(u_\varepsilon')^2},\]
	by  differentiating  the above equation with respect to $\varepsilon$ and setting  $\varepsilon = 0$, we obtain a differential equation for $w$:
	\begin{equation}\label{eq5.1}
		 w'' - x\,w' + \left( 1+ \dfrac{n-1}{C_\lambda^2}\right)w =0.
	\end{equation} with initial conditions $w(0) = 1$ and $w'(0)=0$.
	The solution of this Cauchy problem is \[ _1F_1\left(-\dfrac{1}{2}\left( 1+ \dfrac{n-1}{C_\lambda^2}\right); \dfrac{1}{2}; \dfrac{x^2}{2} \right), \]
	where $_1F_1$ is the Kummer confluent hypergeometric function.
	The number of positive zeros of the above solution is \[ {\rm ceil}\left( \dfrac{1}{2}\left( 1+ \dfrac{n-1}{C_\lambda^2}\right)\right)  \ge 3,\] where ${\rm ceil}$ is the ceiling function.
	Here we have used the assumption on $\lambda$.
	We finish the proof by noting that the equation for $w$ has a unique constant solution $w=0$; therefore, by a uniqueness theorem of the solutions to (\ref{eq5.1}), $w' \ne 0$ whenever $w=0$, provided that $w$ is not the identically zero solution.
\end{proof}
This analytical property directly translates into the following geometric classification for $\gamma_\delta$.
\begin{cor}\label{240112cor5.4}
	Given $n\ge 2$ and $\lambda\le-4\sqrt{\frac{n-1}{5}}$, there is an $\varepsilon_1>0$ such that, for $\delta$ in $(C_\lambda - \varepsilon_1,C_\lambda)$, the curve $\gamma_\delta$ belongs to $\mathcal{C}_2(2,2)$.
\end{cor}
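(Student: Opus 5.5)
The plan is to read off the class of $\gamma_\delta$ from the preceding lemma, applied with $\varepsilon=\delta-C_\lambda<0$. For $\delta<C_\lambda$ we have $\dot\theta_\delta(0)=\frac{n-1}{\delta}-\delta+\lambda>0$, since $C_\lambda$ is the positive root of $r^2-\lambda r-(n-1)=0$. While $\cos\theta_\delta>0$, the curve is the graph $(x,u_\varepsilon(x))$ with $u_\varepsilon(0)=C_\lambda+\varepsilon$ and $u_\varepsilon'(0)=0$, where $u_\varepsilon$ solves \eqref{0528eq2.7}. The turning points $s_k$ where $\sin\theta_\delta=0$ are exactly the zeros of $u_\varepsilon'$. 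Inflection points of $f_k$ (zeros of $f_k''$) correspond to zeros of $u_\varepsilon''$ with $u_\varepsilon'\neq0$, because $f_k'=1/u_\varepsilon'$ on each arc.

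\emph{Step 1: linearization and convergence.} Fix $X>0$ beyond the third positive zero $x_3$ of $w$. By smooth dependence on initial data, $(u_\varepsilon-C_\lambda)/\varepsilon\to w$ in $C^2([0,X])$ as $\varepsilon\to0$. In particular $u_\varepsilon'$ is uniformly small on $[0,X]$. Hence $\cos\theta_\delta>0$ there, $\gamma_\delta$ really is a graph over $[0,X]$, and $r_\delta$ stays near $C_\lambda$.

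\emph{Step 2: nondegenerate critical structure of $w$.} Write $c=1+\frac{n-1}{C_\lambda^2}$. The zeros $0<x_1<x_2<x_3$ of $w$ are simple, and $w$ alternates sign, starting from $w(0)=1$. At any zero $\xi>0$ of $w'$ we have $w''(\xi)=-c\,w(\xi)\neq0$. Consequently $w'$ has exactly one simple zero $\xi_1\in(x_1,x_2)$, a minimum of $w$, and exactly one simple zero $\xi_2\in(x_2,x_3)$, a maximum of $w$; $w'$ has no zeros in $(0,x_1]$, since $w''(0)=-c<0$ and a zero there would be a maximum with $w>0$, contradicting $w''=-cw<0$ at it.

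Between consecutive critical points, $w'$ has an extremum, so $w''$ changes sign. At $0$ and at each $\xi_j$, $w''\neq0$, so $w''$ has a sign change in $(0,\xi_1)$ and in $(\xi_1,\xi_2)$; these sign changes are transversal.

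Since $\varepsilon<0$, $u_\varepsilon'=\varepsilon w'+o(\varepsilon)$ in $C^1$, uniformly on $[0,X]$. Away from $x=0$ the simple zeros persist, giving unique zeros $\tilde\xi_1,\tilde\xi_2$ of $u_\varepsilon'$ near $\xi_1,\xi_2$. Near $0$, $u_\varepsilon''(0)\approx-c\varepsilon>0$, so $u_\varepsilon'>0$ on a small interval $(0,\eta]$ and has no spurious zero there. This yields $s_1<s_2<S$, with $\sin\theta_\delta>0$ on $(0,s_1)$ (where $u_\varepsilon$ rises to a local maximum at $\tilde\xi_1$) and $\sin\theta_\delta<0$ on $(s_1,s_2)$.

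\emph{Step 3: types.} On the first arc, $u_\varepsilon'>0$ vanishes at both ends. Hence $u_\varepsilon'$ attains an interior maximum, i.e.\ $u_\varepsilon''$ changes sign. So $f_1'=1/u_\varepsilon'>0$, and $f_1''$ vanishes there, showing $f_1$ is of type 2; uniqueness of the zero of $f_1''$ is automatic from Corollary \ref{0528cor3.1}. The same argument on $(\tilde\xi_1,\tilde\xi_2)$, where $u_\varepsilon'<0$ vanishes at both ends, shows that $f_2$ is of type 2. Therefore $\gamma_\delta\in\mathcal{C}_2(2,2)$ for $\delta\in(C_\lambda-\varepsilon_1,C_\lambda)$, after shrinking $\varepsilon_1$ if necessary.

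\emph{Main obstacle.} The delicate point is controlling zeros of $u_\varepsilon'$ near the degenerate point $x=0$, where $w'(0)=0$, and making the $C^2$ convergence uniform up to $x=0$. I would handle this via the expansion $u_\varepsilon''(0)=-c\varepsilon+O(\varepsilon^2)$ together with a uniform bound on $u_\varepsilon'''$. This gives $u_\varepsilon'(x)\ge -c\varepsilon x/2$ on a fixed interval $[0,\eta]$, while elsewhere the simple-zero persistence argument applies.
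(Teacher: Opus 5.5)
Your proposal is correct and follows the paper's route. The paper obtains the corollary from the linearization $w$ about the constant solution $C_\lambda$ and its (at least) three positive zeros, merely asserting that this ``directly translates''; you make that translation explicit, locating the simple critical points $\xi_1\in(x_1,x_2)$, $\xi_2\in(x_2,x_3)$ of $w$ and obtaining type~2 for both arcs from the interior extremum of $u_\varepsilon'$ together with Corollary \ref{0528cor3.1}. The ``main obstacle'' at $x=0$ is in fact already settled by your Step~1: $x=0$ is a regular point of \eqref{0528eq2.7}, so $u_\varepsilon''/\varepsilon\to w''$ uniformly on $[0,X]$, which gives $u_\varepsilon''\ge -c\varepsilon/2$ on a fixed $[0,\eta]$. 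A bound on $u_\varepsilon'''$ would do the same job only if it were $O(|\varepsilon|)$, not merely uniform in $\varepsilon$.
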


In figure \ref{240116fig5.2}, we display graphs of some profile curves with $n=2$ and $\lambda=-\sqrt{5}$, where the black curve is $\gamma_{C_\lambda}$, namely, $\gamma_{C_\lambda}(s) = (s, \frac{3-\sqrt{5}}{2})$.
\begin{figure}[htbp]
	\centering
	\includegraphics[scale=0.8]{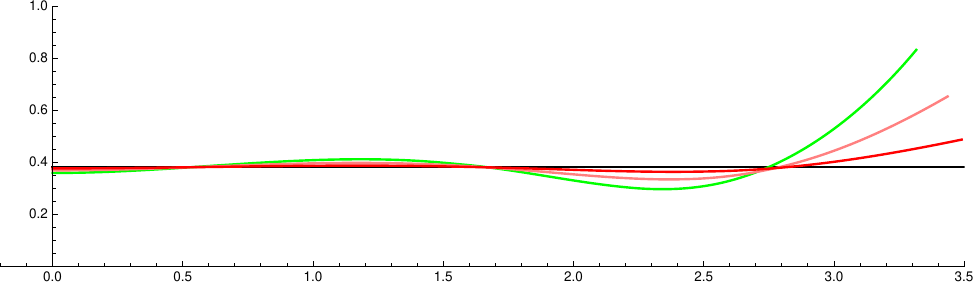}
	\caption{\ \ Graphs of some  ${\gamma}_\delta$ when $\delta$ is close to $C_\lambda$}
	\label{240116fig5.2}
\end{figure}

\section{Proof of the theorem}
We are now in a position to prove our main result. The strategy relies on a continuity method (shooting method).
By continuously varying the initial parameter $\tilde{\delta}$ and defining a critical infimum value $\delta_s$, we will piece together the results established in sections 3, 4, and 5. Our goal is to show that the limiting curve $\gamma_{\delta_s}$ does not exhibit degenerate behaviors.

\noindent {\it Proof of theorem \ref{0316thm1.1}.}
Let $\tilde{\lambda}=-\lambda \le-4\sqrt{\frac{n-1}{5}}$.
We first construct a $\tilde{\lambda}$-hypersurface.
Consider the set \[ 	\left\lbrace \tilde{\delta} \in (0, C_{\tilde{\lambda}}): \forall \delta \in (\tilde{\delta} ,C_{\tilde{\lambda}}),\ \gamma_\delta\ \text{belongs to } \mathcal{C}_2(2,2) \right\rbrace, \]
which is non-empty by corollary  \ref{240112cor5.4}.
Let $\delta_s$ be the infimum of this set: \[ \delta_s =  \inf	\left\lbrace \tilde{\delta} \in (0, C_{\tilde{\lambda}}): \forall \delta \in (\tilde{\delta} ,C_{\tilde{\lambda}}),\ \gamma_\delta\ \text{belongs to } \mathcal{C}_2(2,2) \right\rbrace, \]
proposition \ref{240112prop5.2} tells us that $\delta_s>0$.
Using the definition of $\delta_s$, continuous dependence on the initial data, proposition \ref{240112lem3.3} and lemma \ref{231205lem3.4}, one can prove that the curve $\gamma_{\delta_s}$ does not belong to \(\mathcal{C}_1(3)\).
By the definition of $\delta_s$, lemma \ref{231211lem3.4} shows that the curve $\gamma_{\delta_s}$ does not belong to $\mathcal{C}_1(1)$.
Hence the curve $\gamma_{\delta_s}$ belongs to \(\mathcal{C}_1(2)\), and then $s_1(\delta_s)<S(\delta_s)$ by proposition \ref{231211lem3.3}.
Note that \(s_1(\delta_s)<S(\delta_s)\) means that the curve $\gamma_{\delta_s}$ belongs to some \(\mathcal{C}_2(N_1,N_2)\).
Using lemma \ref{231209lem3.3}, corollary \ref{240114cor3.9} and the definition of $\delta_s$, the curve $\gamma_{\delta_s}$ belongs to \(\mathcal{C}_2(2,3)\) (see the red curve in figure \ref{240116fig6.1}).
Therefore, by the symmetry of the equation (\ref{0316eq2.5}) and proposition \ref{231206lem3.5}, the profile curve $\gamma_{\delta_s}(s)$, $-S(\delta_s)\le s \le S(\delta_s)$ generates an embedded, compact and non-convex $\tilde{\lambda}$-hypersurface.
One can readily verify from \eqref{0316eq2.2} that the corresponding unit normal vector points outward.
Switching  the unit normal vector to its opposite, we obtain  an embedded, compact and non-convex $\lambda$-hypersurface with an inward-pointing unit normal vector.
A non-convex \(\lambda\)-sphere in \(\mathbb{R}^3\) with \(\lambda=\sqrt{5}\) is shown in figure \ref{fig:4.3}.
\begin{figure}[htbp]
	\centering
	\includegraphics[scale=0.6]{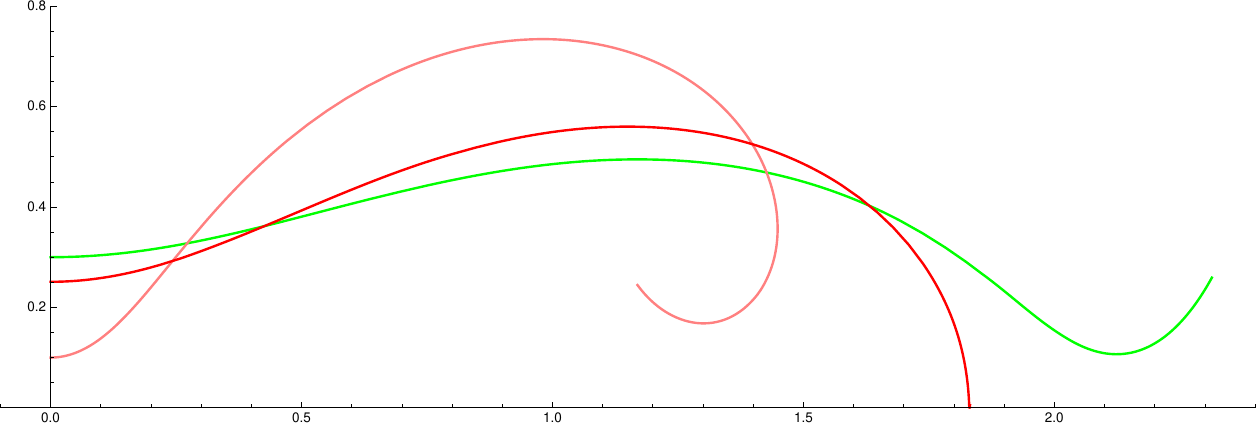}
	\caption{\ \ A curve which generates a mean convex $\mathbb S^n$ $\lambda$-hypersurface}
	\label{240116fig6.1}
\end{figure}

\begin{figure}[H]
\centering
\subfigure[] {	
	\includegraphics[width=0.4\columnwidth]{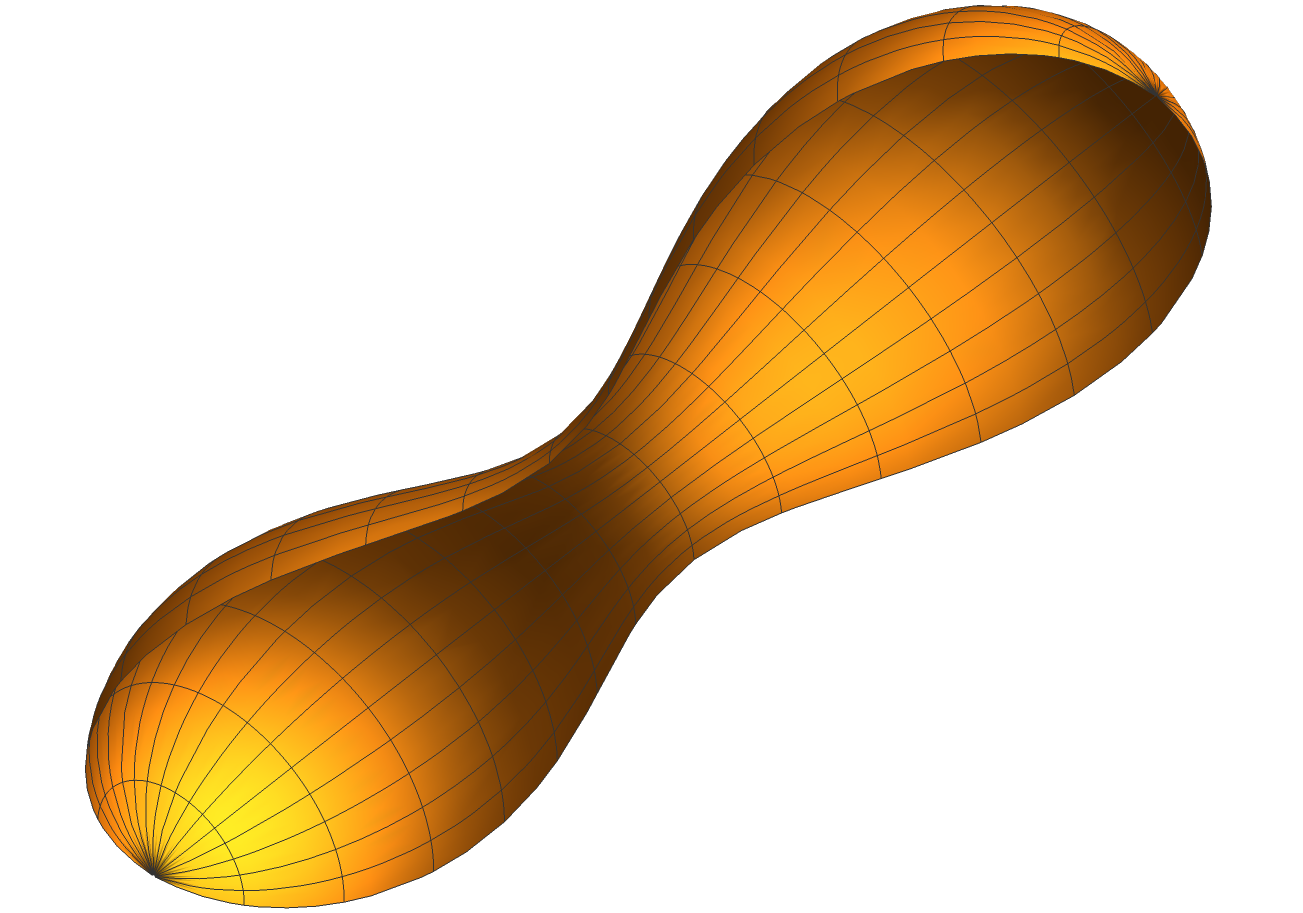}   \label{fig:4.3a}
}\hfil
\subfigure[] {	
	\includegraphics[width=0.36\columnwidth]{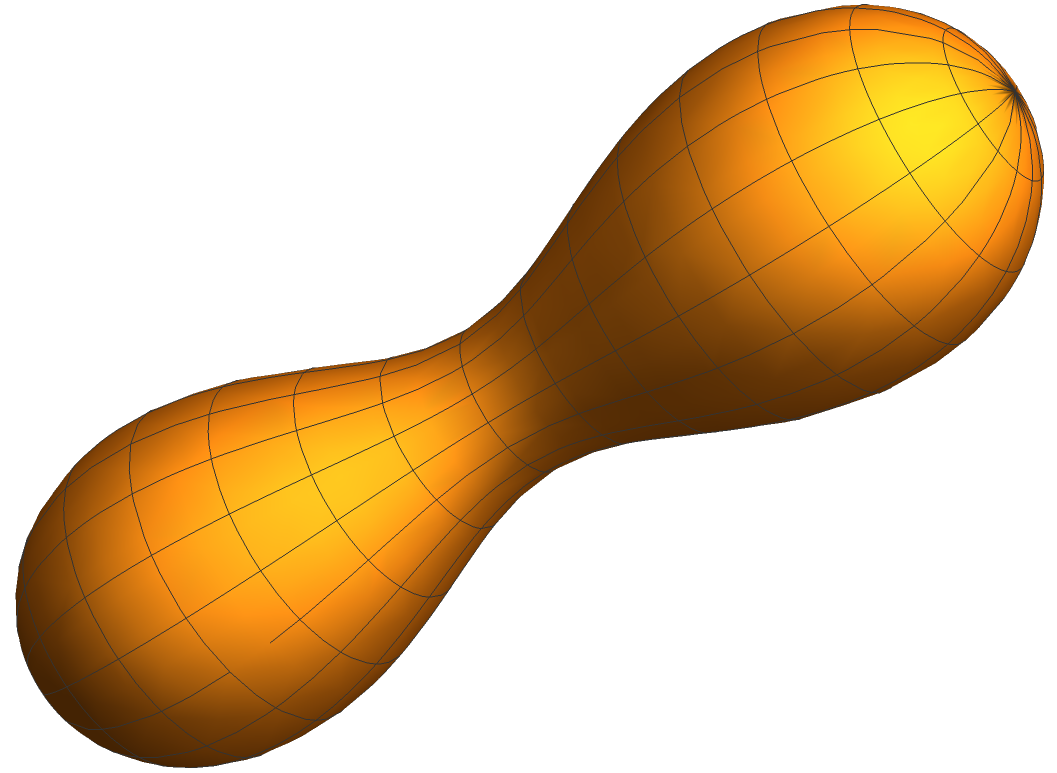}   \label{fig:4.3b}
}
\caption{\ A mean convex $\lambda$-sphere in $\R^3$ where $\lambda=\sqrt 5$. }   \label{fig:4.3}

\end{figure}

 We next show that the hypersurface $\Sigma$ is strictly mean convex. Since the hypersurface $\Sigma$ is compact, there exists a point $q\in \Sigma$ such that $H(q)=\min_{p\in \Sigma}H$, that is, $\< X, \nu\>(q)=\sup_{p\in \Sigma}\< X, \nu\>(p)$.
Then we have
\begin{equation}\label{eq:7-8-1}
\nabla_{e_i}\< X, \nu\>(q)=\<e_i, \nu\>+\<X, -\kappa_i e_i\>=-\kappa_i\<X, e_i\>=0, \ \ \ \ i=1, 2, \cdots,n,
\end{equation}
where $\nabla$ is the gradient operator on $\Sigma$ and $\{e_1, \cdots, e_n\}$  is a local orthonormal frame on $\Sigma$. From equation (2.3), we get $\kappa_j=-\frac{\dot{x}}{r}>0$, $j=1,2,\cdots,n-1$. If $H(q)=\min_{p\in \Sigma}H\leq0$, that is $\< X, \nu\>(q)=\lambda-H(q)>0$, then we obtain $\kappa_n=H-(\kappa_1+\kappa_2+\cdots+\kappa_{n-1})<0$.
It follows from \eqref{eq:7-8-1} that
\begin{equation}
\<X, e_i\>(q)=0, \ \ \ \ i=1, 2, \cdots,n.
\end{equation}
That is, $X(q)\parallel \nu(q)$. $\nu$ is the inward unit normal vector, we conclude $\< X, \nu\>(q)<0$, it is a contradiction.
Hence,  $H(q)=\min_{p\in \Sigma}H>0$.
Therefore, $\Sigma$ is strictly mean convex.\\



\begin{thebibliography}{1}
		
		\bibitem{A} S. B. Angenent, \emph{Shrinking doughnuts}, Birkh\"auser, Boston-Basel-Berlin,  {\bf 7}, 21-38, 1992.
		\bibitem{B} S. Brendle,  \emph{Embedded self-similar shrinkers of genus 0}, Ann. of Math. {\bf 183} (2016), 715-728.
		\bibitem{C} J. -E. Chang,  \emph{1-dimensional solutions of the $\lambda$-self shrinkers}, Geom. Dedicata  {\bf 189} (2017), 97-112.
		\bibitem{CM} T. H. Colding and W. P. Minicozzi II,  \emph{Generic mean curvature flow I; generic singularities}, Ann. of Math. {\bf 175} (2012), 755-833.
		\bibitem{CS} A. C. -P. Chu and A. Sun, \emph{Genus one singularities in the mean curvature flow}, arXiv: 2308.05923.
		\bibitem{CW}  Q. -M. Cheng and G. Wei,  \emph {Complete $\lambda$-hypersurfaces of weighted volume-preserving mean curvature flow},
		Cal. Var. Partial Differential Equations {\bf57} (2018), 32.
		\bibitem{CW1} Q. -M. Cheng and G. Wei, \emph{Examples of compact $\lambda$-hypersurfaces in Euclidean spaces},
		Sci. China Math.  {\bf64} (2021), 155-166.
		\bibitem{CLW} Q. -M. Cheng, J. Lai and G. Wei, \emph{Examples of compact embedded convex $\lambda$-hypersurfaces}, J. Funct. Anal. {\bf286} (2024), no. 2, Paper No. 110211.
        \bibitem{LCW} Q. -M. Cheng, J. Lai and G. Wei, \emph{Embedded cylindrical and doughnut-shaped $\lambda$-hypersurfaces}, arXiv:2406.11123.
		\bibitem{DD} M. do Carmo and M. Dajczer, \emph{Hypersurfaces in space of constant curvature},
		Trans. Amer. Math. Soc. {\bf 277}  (1983),  685-709.
		\bibitem{D} G. Drugan, \emph{An immersed $S^2$ self-shrinker}. Trans. Amer. Math. Soc. {\bf367} (2015), 3139-3159.
		\bibitem{DK} G. Drugan and S. J. Kleene, \emph{Immersed self-shrinkers}, Trans. Amer. Math. Soc. {\bf 369} (2017), 7213-7250.
		\bibitem{G} Q. Guang, \emph{A note on mean convex $\lambda$-surfaces in $\R^3$}, Proc. Amer. Math. Soc. {\bf 149} (2021), 1259-1266.

        \bibitem{H} G. Huisken, \emph{Asymptotic behavior for singularities of the mean curvature flow}, J. Differential Geom. {\bf 31} (1990), 285-299.


		\bibitem{Hei} S. Heilman, \emph{Symmetric convex sets with minimal Gaussian surface area}, Amer. J. Math. {\bf 143} (2021), 53-94.	

		\bibitem{Hs5} W. Hsiang and W.-Y. Hsiang, \emph{On the existence of codimension-one minimal spheres in compact symmetric spaces of rank $2$. II}, J. Differential Geometry {\bf 17} (1982), no.~4, 583--594 (1983); MR0683166

        \bibitem{I} T. Ilmanen, \emph{Problems in mean curvature flow}, available at \nolinkurl{http://people.math.ethz.ch/\textasciitilde ilmanen/classes/eil03/problems03.pdf}.


		\bibitem{KM} S. Kleene and N. M. M\o ller, \emph{Self-shrinkers with a rotational symmetry}, Trans. Amer. Math. Soc. {\bf 366} (2014), no. 8, 3943-3963.
		\bibitem{L} T. -K. Lee,  \emph{Convexity of $\lambda$-hypersurfaces}, Proc. Amer. Math. Soc. {\bf 150}  (2022), 1735-1744.
		
		\bibitem{LW} Z. Li and G. Wei, \emph{An immersed $S^n$ $\lambda$-hypersurface}, J. Geom. Anal. {\bf 33} (2023), Paper No. 288, 29 pp.
		\bibitem{Liang} J. Liang, \emph{A singular initial value problem and self-similar solutions of a nonlinear dissipative wave equation}, J. Differential Equations {\bf 246} (2009), no.~2, 819--844; MR2468737
		\bibitem{M} P. McGrath, \emph{Closed mean curvature self-shrinking surfaces of generalized rotational type}, arXiv: 1507.00681.
		\bibitem{MR}  M. McGonagle and J. Ross, \emph{The hyperplane is the only stable, smooth solution to the isoperimetric problem in Gaussian space}, Geom. Dedicata {\bf178} (2015), 277-296.
		\bibitem{R} J. Ross, \emph{On the existence of a closed, embedded, rotational $\lambda$-hypersurface}, J. Geom.  {\bf110} (2019), 1-12.
		\bibitem{R1} O. Riedler, \emph{Closed embedded self-shrinkers of mean curvature flow}, J. Geom. Anal. {\bf33} (2023), 172.
	    \bibitem{S} A. Sun, \emph{Compactness and rigidity of $\lambda$-surfaces}, Int. Math. Res. Not. IMRN  (2021),  11818-11844.
		
		
		
		
	\end{thebibliography}
\end{document}